\newcommand{\beq}{\begin{eqnarray*}}
\newcommand{\eeq}{\end{eqnarray*}}
\renewcommand{\theequation}{\thesection.\arabic{equation}}
\def\eqnarray{%
\stepcounter{equation}%
\let\@currentlabel=\theequation
\global\@eqnswtrue
\global\@eqcnt\z@
\tabskip\@centering
\let\\=\@eqncr
$$\halign to \displaywidth\bgroup\@eqnsel\hskip\@centering
$\displaystyle\tabskip\z@{##}$&\global\@eqcnt\@ne
\hfil$\displaystyle{{}##{}}$\hfil
&\global\@eqcnt\tw@$\displaystyle\tabskip\z@{##}$\hfil
\tabskip\@centering&\llap{##}\tabskip\z@\cr}
\newtheorem{theorem}{Theorem}[section]
\newtheorem{lemma}[theorem]{Lemma}
\newtheorem{proposition}[theorem]{Proposition}
\newtheorem{remark}{Remark}[section]
\newsavebox{\toy}
\savebox{\toy}{\framebox[0.65em]{\rule{0cm}{1ex}}}
\newcommand{\QED}{\usebox{\toy}}
\def\nlni{\par\ifvmode\removelastskip\fi\vskip\baselineskip\noindent}
\newenvironment{proof}{\nlni\begingroup\it Proof.\rm}{
\endgroup\vskip\baselineskip}
\begin{document}
%%%%%%% DOUBLE SPACED %%%%%%%%
\setlength{\baselineskip}{15pt}
\title{
A generalization of carries process\\ and riffle shuffles
}
\author{
Fumihiko Nakano
\thanks{
Department of Mathematics,
Gakushuin University,
1-5-1, Mejiro, Toshima-ku, Tokyo, 171-8588, Japan.
e-mail : 
fumihiko@math.gakushuin.ac.jp}
\and 
Taizo Sadahiro
\thanks{Department of Computer Science, 
Tsuda College, 
2-1-1, Tsuda, Kodaira City, 187-8577, 
Tokyo, Japan.
e-mail : sadahiro@tsuda.ac.jp}
}
%\date{最終更新日：}
\maketitle
%第一ページの番号を消す
%\thispagestyle{empty}
%%%%%%% ABSTRACT %%%%%%%%%%%%%
\begin{abstract}
As a continuation to our previous work
\cite{NS1}, 
we consider 
a generalization of carries process.
Our results are : 
(i) 
right eigenvectors of the transition probability matrix, 
(ii)
correlation of carries between different steps, and 
(iii)
generalized riffle shuffle
whose corresponding descent process has the same distribution as that of the generalized carries process. 
\end{abstract}

%Mathematics Subject Classification (2000): 82B44, 81Q10

%\tableofcontents
%%%%% INTRODUCTION %%%%%%%%%%%%%%%%%%%%%%%%%%%%%%%%%
\section{Introduction}
%%%%%%%%%%%%%%%%%%%%%
\subsection{Background and definition}
Carries process 
is the Markov chain of carries in adding array of numbers. 
It was 
Holte \cite{Holte} 
who first studied the carries process, and he found many beautiful properties, e.g., 
the eigenvalues of the transition probability matrix 
$P$ 
consist of negative powers of the base 
$b$,  
the eigenvectors of 
$P$ are independent of 
$b$, 
and Eulerian numbers appear in the stationary distribution.
Diaconis-Fulman \cite{DF1, DF2, DF3} 
found connections to different subjects, e.g., the carries process has the same distribution as the descent process induced by the repeated applications of the riffle shuffle, 
and 
the array of the left eigenvectors of $P$ 
coincides with the Foulkes character table of 
$S_n$. 

In 
\cite{NS1}, 
we considered a generalization of the carries process in the sense that 
(i)
we take various digit sets, and 
(ii)
we also consider negative base. 
There we obtained
(i) the left eigenvectors of 
$P$, 
and 
(ii) the limit theorem which yields the distribution of the sum of i.i.d. uniformly distributed random variables on 
$[0,1]$. 
This paper 
is the continuation of 
\cite{NS1} ; 
here we study  
(i) the right eigenvectors of 
$P$, 
which yields the correlation of carries of different steps, and 
(ii) the equivalence to the descent process induced by the repeated generalized riffle shuffles on the colored permutation group.  
\cite{NS3}
is a review article of our results obtained so far.

In what follows, 
we first recall the definitions of carries process and some results in 
\cite{NS1}
(subsection 1.2), 
and then state our results in this paper(subsection 1.3). 
To simplify the statements, 
we shall discuss the positive/negative base simultaneously. 
Let 
$\pm b \in {\bf Z}$($b \ge 2$)
be the base and let 
$
{\cal D}_d :=
\{ d, d+1, \cdots, d + b-1 \}
$
be the digit set such that 
$
1-b \le d \le 0
$
to have 
$0 \in {\cal D}_d$. 
Then any 
$x \in {\bf N}$
has the unique representation 
%
%\begin{equation}
\begin{eqnarray*}
x &=& a_N (+ b)^N + a_{N-1} (+ b)^{N-1} + \cdots + a_0, 
\quad
a_k \in {\cal D}_d,
%\label{representation}
\\
x &=& a_N' (- b)^N + a_{N-1}' (- b)^{N-1} + \cdots + a_0', 
\quad
a'_k \in {\cal D}_d.
%\label{representation2}
\end{eqnarray*}
%\end{equation}
%
In adding 
$n$
numbers under this representation, let 
$C_{k-1}^{\pm}$
be the carry from the 
$(k-1)$-th digit which belongs to a set 
${\cal C}(\pm b, n)$ 
to be specified in Proposition \ref{carry}. 
In the $k$-th digit, 
we take  
$X_{}, \cdots, X_{n}$
uniformly at random from 
${\cal D}_d$
and then the carry 
$C_k^{\pm} \in {\cal C}(\pm b, n)$ 
to the 
$(k+1)$-th digit 
is determined by the following equation :  
\begin{equation}
%C_k^{\pm} = j
%\stackrel{def}{\Longleftrightarrow}
C_{k-1}^{\pm} + X_{1} + \cdots + X_{n}
=
C_k^{\pm}(\pm b) + r, 
\quad
r \in {\cal D}_d.
\label{carriesprocess}
\end{equation}
The process
$\{ C_k^{\pm} \}_{k=0}^{\infty}$
is 
Markovian with state space
${\cal C}(\pm b, n)$, 
which we call the 
{\bf $n$-carries process over 
$(\pm b, {\cal D}_d)$}.
Holte's carries process 
corresponds to the case where 
the base is positive and 
$d=0$. 
%
%%%%%
\subsection{Our previous results}
In this subsection 
we recall 
some results in 
\cite{NS1}
related to those in this paper. 
Let 
\beq
l_{\pm}
&=&
l(\pm b, d)
:=
\left\{
\begin{array}{cc}
%\cases{
\frac {d}{b-1} & ((+b)-case) \\
- \frac {b+d}{b+1} & ((-b)-case) \\
\end{array}\right.
\eeq
Then the carry set 
${\cal C}(\pm b, n)$ 
is explicitly given by Proposition \ref{carry} below. 
\begin{proposition}
%\cite{NS1}
{\bf \quad}\\
\label{carry}
(1)
The carry set 
${\cal C}(\pm b, n)$
in the 
$n$-carries process over 
$(\pm b, {\cal D}_d)$ 
is given by 
\beq
\quad
&&
{\cal C}(\pm b, n)
=
\{\min {\cal C}_{\pm}, \min {\cal C}_{\pm}+1, \cdots, \max {\cal C}_{\pm}\}
\\
%
%\mbox{ where }
&& 
\min {\cal C}_{\pm} :=
\left\lfloor (n-1) l_{\pm} \right\rfloor,
\quad
\max {\cal C}_{\pm} :=
\left\lceil (n-1)(l_{\pm} +1) \right\rceil.
\eeq
(2)
The number of elements of 
${\cal C}(\pm b, n)$ 
is  
\beq
\sharp {\cal C}(\pm b, n)
&=&
%\cases{
\left\{
\begin{array}{cc}
n & ((n-1)l_{\pm} \in {\bf Z}) \\
n+1 & ((n-1)l_{\pm} \notin {\bf Z}). \\
\end{array}\right.
%}
\eeq
\end{proposition}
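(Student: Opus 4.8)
The plan is to first convert the implicit definition (\ref{carriesprocess}) into an explicit recurrence. Since $r$ must lie in $\mathcal{D}_d=\{d,\dots,d+b-1\}$, equation (\ref{carriesprocess}) determines $C_k^{\pm}$ as the unique integer for which $C_{k-1}^{\pm}+X_1+\cdots+X_n \mp b C_k^{\pm}\in\mathcal{D}_d$; solving $d\le C_{k-1}^{\pm}+\sum_i X_i \mp b C_k^{\pm}\le d+b-1$ (an admissible band for $C_k^{\pm}$ of width $(b-1)/b<1$, hence containing exactly one integer) yields
\[
C_k^{\pm}=\pm\left\lfloor\frac{C_{k-1}^{\pm}+X_1+\cdots+X_n-d}{b}\right\rfloor .
\]
I will read $\mathcal{C}(\pm b,n)$ as the set of recurrent values of this chain. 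Because $\sum_i X_i$ sweeps every integer in $[nd,\,n(d+b-1)]$ as the $X_i$ range independently over $\mathcal{D}_d$, and the right-hand side above changes by at most one when the argument of the floor changes by one, the reachable carries always form a contiguous block of integers; so it suffices to pin down $\min\mathcal{C}_{\pm}$ and $\max\mathcal{C}_{\pm}$.

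For the positive base the recurrence is non-decreasing in $C_{k-1}$ and in each $X_i$, so the extreme carries are the reachable fixed points of the ``all digits minimal'' map $f(c)=\lfloor(c+(n-1)d)/b\rfloor$ and the ``all digits maximal'' map $g(c)=\lfloor(c+(n-1)d+n(b-1))/b\rfloor$. A direct floor estimate shows $\lfloor(n-1)d/(b-1)\rfloor=\lfloor(n-1)l_+\rfloor$ is the unique fixed point of $f$, and $\lceil(n-1)(d+b-1)/(b-1)\rceil=\lceil(n-1)(l_++1)\rceil$ is the smallest fixed point of $g$; monotonicity then gives $f(c)\ge\min\mathcal{C}_+$ and $g(c)\le\max\mathcal{C}_+$ on the interval, so no carry escapes, while iterating $f$ from above and $g$ from below attains both endpoints and, via the step-by-one property, every value in between. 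The one point needing care is that $g$ acquires a second fixed point $\max\mathcal{C}_+ +1$ exactly when $(n-1)l_+\in\mathbf{Z}$; I must check that $g(c)\le\max\mathcal{C}_+$ for all $c\le\max\mathcal{C}_+$, so this larger fixed point is unreachable from below and not recurrent. This is precisely what makes the count fall from $n+1$ to $n$ in that case.

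The negative base is the genuine obstacle, since the single step $c\mapsto-\lfloor(c+\sum_i X_i-d)/b\rfloor$ is orientation-reversing, interchanging minima and maxima and destroying the single monotone fixed-point picture. I will instead characterize a self-consistent interval $[\alpha,\beta]$ by the coupled system $\beta=-\lfloor(\alpha+(n-1)d)/b\rfloor$ and $\alpha=-\lfloor(\beta+(n-1)d+n(b-1))/b\rfloor$ (equivalently, pass to the orientation-preserving two-step map), and verify by floor/ceiling estimates that $\alpha=\lfloor(n-1)l_-\rfloor$ and $\beta=\lceil(n-1)(l_-+1)\rceil$ solve it. Here the modulus governing the arithmetic is $b+1$ rather than $b-1$, reflecting the alternating sign in $l_-=-(b+d)/(b+1)$, and keeping these estimates exact is the crux; reachability of all intermediate values again follows from the step-by-one property. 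Finally, part (2) is immediate from (1): writing $\max\mathcal{C}_{\pm}=\lceil(n-1)l_{\pm}\rceil+(n-1)$, we obtain $\#\mathcal{C}(\pm b,n)=\max\mathcal{C}_{\pm}-\min\mathcal{C}_{\pm}+1=(n-1)+1+\bigl(\lceil(n-1)l_{\pm}\rceil-\lfloor(n-1)l_{\pm}\rfloor\bigr)$, which equals $n$ when $(n-1)l_{\pm}\in\mathbf{Z}$ and $n+1$ otherwise.
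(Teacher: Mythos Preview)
The paper does not prove this proposition. It is explicitly introduced in subsection~1.2 with the words ``we recall some results in \cite{NS1},'' and Proposition~\ref{carry} is stated without proof as background imported from the authors' earlier paper. There is therefore no in-paper argument to compare against.

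As to your sketch itself: the reduction to the explicit recurrence $C_k^{\pm}=\pm\lfloor(C_{k-1}^{\pm}+\sum_i X_i-d)/b\rfloor$ is correct, and your positive-base analysis via the monotone maps $f,g$ and their fixed points is a sound strategy, including the observation that the borderline case $(n-1)l_+\in\mathbf Z$ creates a spurious fixed point of $g$ that must be shown unreachable from below. The negative-base part is where you are thinnest: you state the coupled two-equation system for $(\alpha,\beta)$ but do not actually carry out the floor/ceiling verification that $\alpha=\lfloor(n-1)l_-\rfloor$ and $\beta=\lceil(n-1)(l_-+1)\rceil$ solve it, nor do you argue uniqueness or recurrence (why no smaller self-consistent interval exists). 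Since you yourself flag this as ``the crux,'' the proposal as written is an outline rather than a proof for the $(-b)$ case; the remaining computation is routine but not entirely trivial, and should be supplied. Part~(2) is handled cleanly.
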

$\sharp S$
is the number of elements of a finite set $S$.
If 
$(n-1) l_{\pm} \notin {\bf Z}$, 
$\sharp{\cal C}(\pm b, n)$ 
is larger than that of Holte's carries process. 
We 
introduce the following notation which is an important parameter to describe our results. 
\beq
p 
=
p(\pm b, d, n) 
:=
\frac {1}
{1 - \langle (n-1) l_{\pm} \rangle}
=
%\cases{
\left\{
\begin{array}{cc}
1 & ((n-1)l_{\pm} \in {\bf Z}) \\
\langle (n-1)(-l_{\pm}) \rangle^{-1}
& ((n-1)l_{\pm} \notin {\bf Z}) \\
\end{array}
\right.
%
%}
\eeq
where
$\langle x \rangle
:=
x - \lfloor x \rfloor$ 
is the fractional part of 
$x$. 
$\sharp {\cal C}(b, n) = n$
if and only if 
$p=1$,  
including the case of Holte's carries process.
To study further, 
let us change variables : 
$X_j = Y_j + d$, $r = s + d$, 
$C_{k}^{\pm} = \kappa_k^{\pm} + \min {\cal C}_{\pm}$ 
so that 
\beq
&&
Y_j, s \in {\cal D}(b):=\{0, 1, \cdots, b-1\}, 
\\
&&
\kappa_k^{\pm} \in {\cal C}_p(n):=
%\cases{
\left\{
\begin{array}{cc}
\{0, 1, \cdots, n-1 \}
&
(p=1) \\
\{0, 1, \cdots, n\} 
&
(p \ne 1) \\
\end{array}
\right.
%}
\eeq
Substituting them into (\ref{carriesprocess}), 
we have
\begin{eqnarray}
&&
\cases{
\kappa_{k-1}^{+} + Y_1 + \cdots + Y_n + A_{+}(b)
=
\kappa_{k}^{+}b + s
& ($(+b)$-case) \cr
\kappa_{k-1}^{-} + Y_1 + \cdots + Y_n + A_{-}(b)
=
(n-\kappa_{k}^{-})b + s
& ($(-b)$-case) \cr
}
\label{bnp}
\\
&&
\mbox{ where }\quad
A_+(b) := \frac{b-1}{p^*}, 
\quad
A_-(b) := \frac {b+1}{p}-1, 
\quad
\frac 1p + \frac {1}{p^*} = 1. 
\nonumber
\end{eqnarray}
It says that, 
$\{ \kappa_k^{\pm} \}$
is similar to  
$(n+1)$-carries process over 
$(b, {\cal D}_0)$ 
except that 
$X_{n+1} = A_{\pm}(b)$ 
and 
$C_{k}^-$ 
in 
(\ref{carriesprocess}) 
is replaced by 
$n - C_k^-$.
Note that 
$A_{\pm}(b) \in {\bf N}$. 
Conversely, if 
$p \ge 1$ and 
$\frac {b \mp 1}{p} \in {\bf N}$
(equivalently, 
$b \equiv \pm 1 \pmod p$
if 
$p \in {\bf N}$), 
then 
(\ref{bnp}) 
still defines a Markov chain
$\{ \kappa_k^{\pm} \}$ 
on 
${\cal C}_p(n)$, 
even if it does not correspond to a 
$n$-carries process over 
$(\pm b, {\cal D}_d)$
for some 
$d$. 
We call 
$\{ \kappa_k^{\pm} \}$
{\bf $(\pm b, n, p)$-carries process}.
A simple 
examination of (\ref{bnp}) implies that 
$(\pm b, n, p)$-carries process 
is irreducible and aperiodic.
When 
$p=1$(resp. $p=2$), 
$(+b, n, p)$-carries process 
coincides with that of Holte's carries process
(resp. type B process in \cite{DF2}). 
Let 
$P_p^{\pm} := \{ P_p^{\pm}(i,j) \}_{i, j \in {\cal C}_p(n)}$
be the transition probability matrix for the 
$(\pm b, n, p)$-carries process : 
\[
P_p^{\pm}(i,j)
:=
{\bf P}\left(
\kappa_1^{\pm} = j \, | \, \kappa_0^{\pm} = i
\right), 
\quad
i, j \in {\cal C}_p(n).
\]
Then 
$P_p$ 
and its left eigenvectors are given below. 
\begin{proposition}
\label{prob}
%\cite{NS1}
%
\begin{eqnarray}
P_p^{\pm}(i, j)
&=&
\frac {1}{b^n}
\sum_{r \ge 0}
(-1)^r
\left( \begin{array}{c}
n+1 \\ r
\end{array} \right)
\left( \begin{array}{c}
n + B_p^{\pm}(i, j) - br \\ n
\end{array} \right)
1(B_p^{\pm}(i, j) - br \ge 0)
%\times
\nonumber
%\label{Ptilde}
\\
%
%&&\qquad\qquad \times
%1(A(i, j) - br \ge 0)
%\nonumber
%\\
%
\mbox{ where }
&&
B_p^+(i, j) 
:= 
\left( j + \frac 1p \right) b
- 
\left(
i + \frac 1p
\right)
\nonumber
\\
&&
B_p^-(i, j) 
:= 
\left( -j + 1 - \frac 1p \right) b
- 
\left(
i + \frac 1p
\right)
 + nb
 \nonumber
\\
&&
i, j \in {\cal C}_p(n)
%= 0, 1, \cdots, \sharp {\cal C}_p(n) - 1.
\nonumber
\end{eqnarray}
where 
$1(E)$
is the indicator function of the event 
$E$, 
that is, 
$1(E) = 1$
if 
$E$
is true and 
$1(E) = 0$
otherwise. 
\end{proposition}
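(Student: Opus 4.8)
The plan is to compute the one-step transition probability directly from the defining
equation~(\ref{bnp}) and then recognize the resulting count as an inclusion--exclusion
sum. Consider first the $(+b)$-case. Starting from $\kappa_{k-1}^+ = i$, the next carry
equals $j$ precisely when the quantity $i + Y_1 + \cdots + Y_n + A_+(b)$, upon division by
$b$, has quotient $j$; that is, when
\[
jb \le i + Y_1 + \cdots + Y_n + A_+(b) \le jb + (b-1),
\]
since the remainder $s$ ranges over $\{0,1,\dots,b-1\}$. Writing
$S := Y_1 + \cdots + Y_n$, the probability is $b^{-n}$ times the number of tuples
$(Y_1,\dots,Y_n) \in {\cal D}(b)^n$ with $S$ lying in a half-open interval of length $b$.
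The key algebraic step is to check that the endpoints of this interval match the
definition of $B_p^+(i,j)$: using $A_+(b) = (b-1)/p^* = (b-1)(1 - 1/p)$, one verifies that
$jb - i - A_+(b)$ and its shift by $b$ collapse exactly to the two endpoints determined by
$B_p^+(i,j) = (j + 1/p)b - (i + 1/p)$. I would carry out this bookkeeping first, because
the whole identification rests on it.

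Next I would count the integer points. The number of $(Y_1,\dots,Y_n) \in \{0,\dots,b-1\}^n$
with $Y_1 + \cdots + Y_n \le m$ is the classical coefficient
$\sum_{r \ge 0} (-1)^r \binom{n}{r} \binom{m - br + n}{n}$, obtained by inclusion--exclusion
on the events $\{Y_j \ge b\}$ applied to the unrestricted count $\binom{m+n}{n}$ of
nonnegative solutions of $Y_1 + \cdots + Y_n \le m$. Taking the difference of two such
cumulative counts (for the upper and lower endpoints of the length-$b$ window) and using
the Pascal-type absorption $\binom{n}{r} + \binom{n}{r-1} = \binom{n+1}{r}$ on consecutive
terms should telescope the two sums into the single stated sum with $\binom{n+1}{r}$ and
with the argument $n + B_p^+(i,j) - br$. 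This combinatorial consolidation, together with the
indicator $1(B_p^+(i,j) - br \ge 0)$ that enforces the range condition on each term, yields
exactly the claimed formula.

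For the $(-b)$-case I would argue by the reduction already recorded after~(\ref{bnp}):
the process $\{\kappa_k^-\}$ is the $(+b)$-type process with the extra summand
$A_-(b)$ and with $\kappa_k^-$ replaced by $n - \kappa_k^-$. Substituting $n - j$ for the
target quotient and $A_-(b) = (b+1)/p - 1$ into the half-open interval condition, and then
running the identical point count, produces the shifted form $B_p^-(i,j)$. The term $+nb$
in $B_p^-$ is exactly the contribution of the reflection $j \mapsto n-j$ combined with the
constant $A_-(b)$, so no new counting is needed---only the endpoint arithmetic has to be
redone. I expect the main obstacle to be the endpoint bookkeeping in the first paragraph:
getting the floor/ceiling boundaries of the summation window to land precisely on
$B_p^\pm(i,j)$ requires careful handling of $1/p$ versus $1/p^*$ and of the $\pm 1$ shifts
in $A_\pm(b)$, and it is here that the positive and negative bases, and the cases $p=1$
versus $p \ne 1$, must be reconciled into the single uniform statement.
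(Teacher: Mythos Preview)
Your approach is correct, but note that the paper does not actually prove Proposition~\ref{prob}: it is listed in Section~1.2 among the results recalled from the earlier paper~\cite{NS1}, so there is no proof here to compare against directly.

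That said, your argument is the natural one and would go through as written. One small streamlining is worth pointing out, and it is implicit in the paper's proof of Proposition~\ref{plusminus1}: rather than fixing $S=Y_1+\cdots+Y_n$ in a window of length~$b$ and then telescoping the two cumulative counts via the Pascal identity $\binom{n}{r}+\binom{n}{r-1}=\binom{n+1}{r}$, you can absorb the remainder $s\in{\cal D}(b)$ as an $(n+1)$-st variable. Then $b^n P_p^{\pm}(i,j)$ is simply the number of $(Y_1,\dots,Y_n,s)\in{\cal D}(b)^{n+1}$ with $Y_1+\cdots+Y_n+s=B_p^{\pm}(i,j)$, and inclusion--exclusion on $n+1$ bounded variables gives the $\binom{n+1}{r}$ directly, with no telescoping needed. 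Your route and this one are equivalent; yours makes the interval-counting picture explicit, while the $(n+1)$-variable version is a line shorter. Your endpoint verifications for $B_p^+$ and $B_p^-$ are exactly right, and the reflection $j\mapsto n-j$ together with $A_-(b)=(b+1)/p-1$ does produce the $+nb$ shift in $B_p^-$ as you say.
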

\begin{theorem}
\label{left}
%\cite{NS1}
The eigenvalues and the left eigenvectors of 
$P_p^{\pm}$
are given by
\beq
P_p^{\pm} &=& L_p^{-1} D_b^{\pm} L_p
\\
\mbox{ where }
\quad
D_b^{\pm} &:=& \mbox{diag}
\left(
1, \left(\pm \frac 1b \right), \cdots, 
\left(\pm \frac 1b \right)^{\sharp {\cal C}_p(n)-1}
\right)
\\
L_p
&:=&
\{ v_{ij}^{(p)}(n) \}_{0 \le i, j \le \sharp{\cal C}_p(n)-1}
\\
v_{ij}^{(p)}(n)
&:=&
\sum_{r=0}^j 
(-1)^r 
\left( \begin{array}{c}
n+1 \\ r
\end{array} \right)
\left\{
p(j-r)+1
\right\}^{n-i}.
\eeq
\end{theorem}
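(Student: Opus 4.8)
The plan is to prove the equivalent row--wise statement that each row of $L_p$ is a left eigenvector, namely
\[
\sum_{k \in {\cal C}_p(n)} v_{ik}^{(p)}(n)\, P_p^{\pm}(k,j) \;=\; \left(\pm\tfrac1b\right)^{i} v_{ij}^{(p)}(n)
\qquad (0\le i,j\le \sharp{\cal C}_p(n)-1),
\]
together with the invertibility of $L_p$. The latter is then immediate: the eigenvalues $(\pm 1/b)^i$ are pairwise distinct and each row is nonzero (indeed $v_{i0}^{(p)}(n)=1$), so the rows are linearly independent and $L_p$ is invertible. For the eigenvector identity I would use two reformulations. First, reading the inclusion--exclusion formula of Proposition \ref{prob} as a probability gives $P_p^{+}(k,j) = b\,{\bf P}(T = B_p^{+}(k,j))$, where $T$ is a sum of $n+1$ independent digits each uniform on ${\cal D}(b)$ (the extra factor $b$ appears because $b^{n+1}{\bf P}(T=m)$ is the number of ways to write $m$ as such a sum, while Proposition \ref{prob} divides only by $b^n$). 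Second, writing $\tilde k := k + 1/p$ and using $p(k-r)+1 = p(\tilde k - r)$, together with the fact that for $p\ge 1$ the truncation $r\le k$ is exactly the condition $\tilde k - r>0$, I get $v_{ik}^{(p)}(n) = p^{\,n-i}\phi_{n-i}(\tilde k)$, where $\phi_d(x) := \sum_{r\ge 0}(-1)^r {n+1 \choose r}(x-r)_+^{\,d}$ and $(y)_+:=\max(y,0)$.

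The heart of the matter is then the two--scale relation
\[
\sum_{t\ge 0}{\bf P}(T=t)\,\phi_d(bx-t) \;=\; b^{\,d-n-1}\,\phi_d(x)\qquad(x\in{\bf R}),
\]
which is precisely the refinement (two--scale) equation of the cardinal B--spline of order $n+1$ under dilation by $b$, since $\phi_n = n!\,B_{n+1}$ and the mask $b^{n+1}{\bf P}(T=\cdot)$ is the $(n+1)$--fold convolution of the indicator of ${\cal D}(b)$. I would prove it by a short characteristic--function computation: $\hat\phi_d(\xi) = d!\,(1-e^{-i\xi})^{n+1}/(i\xi)^{d+1}$ (from $\hat B_{n+1}(\xi)=((1-e^{-i\xi})/(i\xi))^{n+1}$ and $\phi_{d-1}=\phi_d'/d$), while the characteristic function of $T$ is $b^{-(n+1)}(1-e^{-ib\xi})^{n+1}/(1-e^{-i\xi})^{n+1}$; substituting and cancelling the spline factor yields the claim. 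Granting this, the $(+b)$ identity follows by assembling the pieces: with $d=n-i$ and $B_p^{+}(k,j)=b\tilde j-\tilde k$, the matrix product is a convolution against the law of $T$, so
\[
\sum_k v_{ik}^{(p)}(n)\,P_p^{+}(k,j) \;=\; b\,p^{\,d}\sum_{t}{\bf P}(T=t)\,\phi_d\!\big(b\tilde j - t\big) \;=\; b\,p^{\,d}\,b^{\,d-n-1}\phi_d(\tilde j) \;=\; b^{-i}v_{ij}^{(p)}(n),
\]
where the last step uses $v_{ij}^{(p)}(n)=p^{\,d}\phi_d(\tilde j)$ and $b\cdot b^{\,d-n-1}=b^{-i}$.

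For the $(-b)$ case I would exploit a reflection symmetry. From $B_p^{-}(k,j) = b(n+1-\tilde j)-\tilde k$ the transition is again a convolution, now centred at $b(n+1-\tilde j)$, so the same computation gives $\sum_k v_{ik}^{(p)}(n)\,P_p^{-}(k,j) = b^{-i}V_i(n+1-\tilde j)$ with $V_i(x):=p^{\,n-i}\phi_{n-i}(x)$. It then remains to invoke $\phi_d(n+1-x)=(-1)^{\,n-d}\phi_d(x)$, i.e.\ $V_i(n+1-x)=(-1)^i V_i(x)$; this follows from the symmetry of $B_{n+1}$ about $(n+1)/2$ together with $\phi_{d-1}=\phi_d'/d$ (each integration flips the parity), or directly from the difference formula for $\phi_d$. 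This produces the extra factor $(-1)^i$ and hence the eigenvalue $(-1/b)^i$.

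The step I expect to be the main obstacle is matching the finite matrix sum $\sum_{k\in{\cal C}_p(n)}$ to the full convolution $\sum_{t\ge0}$ over the whole line: extending the range is legitimate only because $\phi_d$ is supported on $[0,n+1]$ and vanishes at the boundary lattice points, so that every added term is zero. Verifying this requires the explicit description of ${\cal C}_p(n)$ in Proposition \ref{carry} and, for $p\ne1$, the standing hypothesis $(b\mp1)/p\in{\bf N}$: the latter guarantees that the convolution lattice $\{b\tilde j-t:t\in{\bf Z}\}$ coincides with the state lattice $\{k+1/p\}$ and that exactly the non--state points fall at $x=0$ or $x=n+1$, where $\phi_d$ vanishes. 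A minor separate check is needed for $d=0$ (the case $i=n$, which occurs only when $p\ne1$), where the convention $(y)_+^{0}=1(y>0)$ must be used and the lowest--order term in the Fourier computation treated by hand.
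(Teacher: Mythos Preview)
The paper does not actually contain a proof of Theorem \ref{left}: this theorem is listed in subsection 1.2 under ``Our previous results'' and is quoted from \cite{NS1}, so there is nothing in the present paper to compare your argument against.

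That said, your approach is sound and rather elegant. The identification $v_{ik}^{(p)}(n)=p^{\,n-i}\phi_{n-i}(k+1/p)$ with $\phi_d(x)=\sum_{r\ge 0}(-1)^r\binom{n+1}{r}(x-r)_+^{\,d}$ is correct, and recognising the resulting identity as the $b$-scale refinement equation for (derivatives of) the cardinal B-spline $B_{n+1}$ is a genuinely conceptual explanation of why the eigenvectors are independent of $b$ while the eigenvalues are pure powers of $1/b$. The Fourier computation you sketch goes through exactly as written, and the reflection $\phi_d(n+1-x)=(-1)^{n-d}\phi_d(x)$ cleanly delivers the extra sign in the $(-b)$ case. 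The point you flag as the main obstacle --- extending the finite sum over ${\cal C}_p(n)$ to the full convolution --- is indeed the only place requiring care; your analysis is right: for $p\ne 1$ the shifted lattice points $k+1/p$ lie strictly inside $(0,n+1)$, while for $p=1$ the boundary values $\phi_d(0)=\phi_d(n+1)=0$ hold because then $d=n-i\ge 1$.

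One small caveat: for $d=0$ the function $\phi_0$ is only piecewise constant, so the Fourier argument must be read distributionally (your formula $\hat\phi_d(\xi)=d!\,(1-e^{-i\xi})^{n+1}/(i\xi)^{d+1}$ is not an $L^1$ Fourier transform for $d<n$); this is fine but worth saying explicitly. Also, the proof in \cite{NS1} (to which this paper defers) almost certainly proceeds by direct algebraic verification of the row identity rather than via B-splines, so your route is different in spirit: it trades a longer explicit computation for an appeal to a structural property of splines, and as a bonus it makes the $b$-independence of the eigenvectors transparent.
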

\begin{remark}
\label{descent}
$\{ v_{i,j}^{(p)}(n) \}$
satisfy the following recursion relation.
\beq
v^{(p)}_{i,j}(n)
&=&
(pj+1) v^{(p)}_{i, j}(n-1)
+
\{p(n+1-j)-1\} v^{(p)}_{i, j-1}(n-1)
%\quad \cdots (\sharp)
\eeq
from which we have, if 
$p \in {\bf N}$, 
$v_{0, k}^{(p)}(n)$
is equal to the number of elements in the colored permutation group  
$G_{p, n} (\simeq {\bf Z}_p \wr S_n)$
whose descent is equal to $k$.
\end{remark}
\begin{remark}
\label{Foulkes}
Miller \cite{Miller}
studied the Foulkes character in the general complex reflection groups. 
According to his results, if 
$p \in {\bf N}$, 
$L_p$ 
coincides with the Foulkes character table of 
${\bf Z}_p \wr S_n$. 
\end{remark}
\begin{remark}
%(duality)
\label{dualityL}
The row 
eigenvectors of 
$L_{p^*}$
are the ``reverse" of those of 
$L_p$
in the following sense. 
\beq
v_{i, j}^{(p^*)}(n)
&=&
(-1)^i
\left( 
\frac {p^*}{p}
\right)^{n-i}
v_{i, n-j}^{(p)}(n).
\eeq
\end{remark}
%
%%%%%%%%%%%%%%%%%%%%%%%%%%%%%
\subsection{Results in this paper}
Here 
we state the results obtained in this paper. 
\subsubsection{Right eigenvectors}
%
%
%We first derive the right eigenvectors of $\tilde{P}^{\pm}$.
%
%
\begin{theorem}
\label{right}
%\cite{NS2}
%
Let 
\[
R_p := L_p^{-1} = \{ u_{ij}^{(p)}(n) \}_{i, j=0, \cdots, \sharp{\cal C}_p(n)-1}
\]
be the matrix composed of the right eigenvectors of 
$P_p^{\pm}$. 
Then its components are given by 
\begin{equation}
u_{ij}^{(p)}(n)
=
\sum_{k=i}^n
\sum_{l=n-j}^k
\frac {
s(k, l) (-1)^{n - j - l}
}
{
k! \;  p^l
}
\left( \begin{array}{c}
l \\ n-j
\end{array} \right)
\left( \begin{array}{c}
n-i \\ n-k
\end{array} \right).
\label{RHS}
\end{equation}
\end{theorem}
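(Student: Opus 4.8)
The plan is to prove the identity $L_p R_p = I$ by factoring $L_p$ into two structured matrices that can be inverted separately. Substituting $q=j-r$ in the formula of Theorem \ref{left} rewrites the left eigenvector entries as $v_{ij}^{(p)}(n)=\sum_{q}(pq+1)^{n-i}\,(-1)^{j-q}\left(\begin{array}{c} n+1\\ j-q\end{array}\right)$, which exhibits $L_p=WB$, where $W$ has entries $W_{iq}=(pq+1)^{n-i}$ (a Vandermonde matrix in the arithmetic nodes $pq+1$) and $B$ has entries $B_{qj}=(-1)^{j-q}\left(\begin{array}{c} n+1\\ j-q\end{array}\right)$ (a unipotent upper-triangular Toeplitz matrix). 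Hence $R_p=L_p^{-1}=B^{-1}W^{-1}$, and the task reduces to inverting $W$ and $B$ and recombining. I would run the argument for $p\ne 1$ (matrices of size $n+1$) and indicate the small change needed when $p=1$ (size $n$, node exponents $1,\dots,n$).

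The matrix $B$ depends on $q,j$ only through $j-q$, with diagonal generating function $(1-x)^{n+1}$; so $B^{-1}$ is the Toeplitz matrix attached to $(1-x)^{-(n+1)}$, giving $(B^{-1})_{iq}=\left(\begin{array}{c} n+q-i\\ n\end{array}\right)$ for $q\ge i$. This step is routine. The main work is inverting $W$. Writing $y_m=pm+1$ and using the Lagrange interpolation identity $\sum_q \ell_q(t)\,y_q^{e}=t^{e}$ for $0\le e\le n$, with $\ell_q(t)=\prod_{m\ne q}\frac{t-y_m}{y_q-y_m}$, and extracting the coefficient of $t^{\,n-j}$ yields $(W^{-1})_{qj}=[t^{\,n-j}]\,\ell_q(t)$. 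Setting $w=(t-1)/p$ turns the denominator into $(-1)^{n-q}q!(n-q)!$ and the numerator into $\prod_{m\ne q}(w-m)$; expanding the latter in powers of $w$ produces Stirling numbers of the first kind, each power $w^{l}=(t-1)^{l}p^{-l}$ contributing the factor $p^{-l}$, and the extraction $[t^{\,n-j}](t-1)^{l}$ contributing $(-1)^{n-j-l}\left(\begin{array}{c} l\\ n-j\end{array}\right)$. These are exactly the ingredients $s(k,l)$, $1/p^{l}$, $(-1)^{n-j-l}$ and $\left(\begin{array}{c} l\\ n-j\end{array}\right)$ of (\ref{RHS}).

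Finally I would form $R_p=B^{-1}W^{-1}$, whose $(i,j)$ entry is $\sum_{q}\left(\begin{array}{c} n+q-i\\ n\end{array}\right)(W^{-1})_{qj}$, and carry out the intermediate summation over $q$. The factorials $q!(n-q)!$ from the Lagrange denominators should combine with the binomials $\left(\begin{array}{c} n+q-i\\ n\end{array}\right)$ to collapse into the single factor $\left(\begin{array}{c} n-i\\ n-k\end{array}\right)$ and the $1/k!$ of (\ref{RHS}), after renaming the surviving indices $k$ and $l$; note that the $i$-dependence and the whole $k$-summation must enter through this recombination, since $W^{-1}$ alone is independent of $i$.

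I expect the main obstacle to be precisely this recombination. The Lagrange form of $W^{-1}$ naturally carries the quotient $\prod_{m=0}^{n}(w-m)/(w-q)$ rather than a single Stirling number, so the heart of the proof is the Stirling/Vandermonde convolution identity that converts $B^{-1}W^{-1}$ into the closed double sum of (\ref{RHS}), together with the verification that the ranges $k=i,\dots,n$ and $l=n-j,\dots,k$ emerge correctly (and the separate treatment of $p=1$). A more computational alternative is to verify $L_pR_p=I$ directly: after the factorization this amounts to checking $\sum_j B_{qj}u_{jj'}^{(p)}(n)=(W^{-1})_{qj'}$, i.e. that $\sum_q (pq+1)^{e}\sum_j B_{qj}u_{jj'}^{(p)}(n)=\delta_{e,\,n-j'}$, which can be attacked by summing over $q$ first (a finite difference of a power, producing a Stirling number of the second kind) and then invoking the orthogonality $\sum_l S(a,l)s(l,b)=\delta_{ab}$ between Stirling numbers of the two kinds.
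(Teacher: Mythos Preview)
Your approach is genuinely different from the paper's. The paper does not factor $L_p$ or invert anything explicitly; it simply takes the claimed formula for $u_{im}^{(p)}$ and computes the product $\sum_m u_{im}^{(p)}v_{mj}^{(p)}$ directly. Three standard identities do all the work: the binomial theorem collapses the $m$-sum to $\{p(j-r)\}^l$; the defining relation $\sum_l s(k,l)x^l=k!\binom{x}{k}$ collapses the $l$-sum to $k!\binom{j-r}{k}$; and the Vandermonde convolution $\sum_k\binom{n-i}{n-k}\binom{j-r}{k}=\binom{n-i+j-r}{n}$ collapses the $k$-sum. The remaining expression $\sum_r(-1)^r\binom{n+1}{r}\binom{n-i+j-r}{n}$ is read off as $[x^j]\,(1-x)^{n+1}\cdot x^i(1-x)^{-(n+1)}=\delta_{ij}$. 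No Lagrange interpolation, no Stirling numbers of the second kind, no separate $p=1$ case.

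Your structural factorization $L_p=WB$ is correct and illuminating, and inverting $B$ is indeed trivial. But the obstacle you flag is real: the Lagrange basis polynomial $\prod_{m\ne q}(w-m)$ is not $\sum_l s(k,l)w^l$ for any single $k$, so extracting the double sum of (\ref{RHS}) from $B^{-1}W^{-1}$ requires an additional convolution identity that you have not supplied and that is not obviously shorter than the paper's three-line computation. In effect, the paper's route exploits that the \emph{target} formula already contains $s(k,l)$, so plugging it into $R_pL_p$ immediately produces falling factorials and everything telescopes; your route has to \emph{discover} the $s(k,l)$ from the Vandermonde inverse, which is harder. Your ``computational alternative'' at the end is closer in spirit to the paper, but even there the paper is more economical: it never needs the orthogonality $\sum_l S(a,l)s(l,b)=\delta_{ab}$, because the $m$-sum via the binomial theorem and the $l$-sum via the falling-factorial identity already reduce everything to a single Vandermonde convolution.
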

$s(n, k)$
is the Stirling number of the first kind : 
\[
s(n, k):=(-1)^{n-k}
\sharp 
\left\{
\sigma \in S_n 
\, | \, 
\sigma 
\mbox{ has $k$ cycles }
\right\}.
\]
\begin{remark}
RHS in (\ref{RHS}) in 
Theorem \ref{right}
appears in 
\cite{Miller}
in a different but related  context(the inverse of the Foulkes character table). 
By Corollary 9.1 in \cite{Miller}
and 
by Lemma \ref{shufflecounting} in Appendix, 
$u_{ij}^{(p)}$ 
has a different expression.
\begin{eqnarray}
u^{(p)}_{ij}(n)
&=&
[x^{n-j}]
\left( \begin{array}{c}
n + \frac {x-1}{p} - i \\ n
\end{array} \right)
\label{different}
\\
&=&
[x^{n-j}]
\sharp \left\{ 
\sigma \in G_{p, n}
\, \Bigl| \, 
\sigma : (x, n, p)\mbox{-shuffle with  }
d (\sigma^{-1}) = i
\right\}
\nonumber
\end{eqnarray}
where 
$[x^n]f(x)$
is the coefficient of 
$x^n$
in the polynomial 
$f(x)$, 
and 
$(b, n, p)$-shuffle
is a generalized riffle shuffle to be defined later. 
\cite{DF4}
derives the right eigenvectors of 
$P_2^+$
for 
$p=2$, 
which is consistent with our result. 
\end{remark}
\begin{remark}
\label{dualityR}
The counterpart 
of duality of 
$L_p$
(Remark \ref{dualityL})
is
\beq
u^{(p^*)}_{ij} (n)
=
(-1)^j
\left(
\frac {p}{p^*} 
\right)^{n-j}
u^{(p)}_{n-i, j}(n).
\eeq
In other words, 
the column vectors of 
$R_p$
and
$R_{p^*}$
are reverse of each other, up to constants.  
\end{remark}
For the combinatorial aspect of 
$R_p$, 
we show 
\begin{theorem}
\label{StirlingFrobenius}
$w_j(n)
:=
n! p^n u_{0, n-j}^{(p)}(n)$
satisfies the following recursion relation. 
\beq
w_j(n) 
&=&
(pn-1) w_j(n-1) + w_{j-1}(n-1) 
\\
w_0(0) &=& 1
\eeq
\end{theorem}
Theorem \ref{StirlingFrobenius} 
implies that 
$w_j (n)$
is equal to the Stirling-Frobenius cycle number of parameter $p$
\cite{SF}.
%
%%%%%%%%%%%%%%%%%%%%%
\subsubsection{Correlation of carries}
As is done in \cite{DF3}, 
we can make use of 
Theorem \ref{right} 
to compute the expectations and correlations between carries at different steps.
\begin{theorem}
\label{zerostart}
%\cite{NS2}
%
Let 
${\bf E}[ \;\cdot\; | \kappa_0^{\pm} = i]$
be the expectation value conditioned  
$\kappa_0^{\pm} = i$.
Then we have
\beq
&(1)& \quad
{\bf E}[ \kappa_r^{\pm} \, | \, \kappa_0^{\pm} = i ]
=
\frac {1}{(\pm b)^r}
\left( i + \frac 1p - \frac {n+1}{2} \right)
- \frac 1p + \frac {n+1}{2}
\\
&(2)& \quad
\mbox{Var }
(\kappa_r^{\pm} \, | \, \kappa_0^{\pm} = i)
=
\frac {n+1}{12}
\left(
1 - \frac {1}{(\pm b)^{2r}}
\right)
\\
&(3)& \quad
\mbox{Cov }(
\kappa^{\pm}_s, \kappa^{\pm}_{s+r} \, | \, 
\kappa^{\pm}_0 =i)
=
\frac {1}{(\pm b)^r}
\frac {n+1}{12}
\left(
1 - \frac {1}{(\pm b)^{2s}}
\right).
\eeq
\end{theorem}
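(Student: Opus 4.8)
The plan is to exploit the full spectral decomposition of Theorem \ref{left}. Write $\beta := \pm 1/b$. Since $P_p^{\pm}=L_p^{-1}D_b^{\pm}L_p$ and $R_p=L_p^{-1}$, each column $f_j(\cdot):=u_{\cdot j}^{(p)}(n)$ of $R_p$, read as a function of the state, is a right eigenvector, $P_p^{\pm}f_j=\beta^{\,j}f_j$, whence
\[
{\bf E}\bigl[f_j(\kappa_r^{\pm})\,\bigl|\,\kappa_0^{\pm}=i\bigr]=\beta^{\,jr}f_j(i).
\]
By the generating identity (\ref{different}) one has $\sum_{j}f_j(i)\,x^{n-j}=\left(\begin{array}{c} n+\frac{x-1}{p}-i \\ n\end{array}\right)$, which is a product of $n$ affine forms in $(x,i)$; hence the coefficient of $x^{n-j}$ is a polynomial in $i$ of degree exactly $j$. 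Thus $f_0,f_1,f_2$ (degrees $0,1,2$, eigenvalues $1,\beta,\beta^2$) span all quadratics in the state, and every conditional moment of $\kappa_r^{\pm}$ up to second order is a fixed linear combination of the $\mathbf{E}[f_j(\kappa_r^{\pm})\,|\,\cdot]$ with $j\le 2$. Note also that the sign in $\pm b$ enters only through odd powers of $\beta$, so it will be invisible in the variance.

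For (1) I extract the degree-one eigenfunction by taking $[x^{n-1}]$ of the binomial. A direct computation gives $f_1(i)=-\frac{1}{(n-1)!\,p^{n-1}}\left(i+\frac1p-\frac{n+1}{2}\right)$, so the affine function $h(i):=i+\frac1p-\frac{n+1}{2}=i-\mu$, with $\mu:=\frac{n+1}{2}-\frac1p$, is a right eigenfunction with eigenvalue $\beta$. Consequently ${\bf E}[\kappa_r^{\pm}-\mu\,|\,\kappa_0^{\pm}=i]=\beta^{r}(i-\mu)$, which is exactly (1).

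For (2) I take $f_2(i)=[x^{n-2}]\left(\begin{array}{c} n+\frac{x-1}{p}-i \\ n\end{array}\right)=\alpha i^2+\beta' i+\gamma$, where $\alpha,\beta',\gamma$ come from expanding the binomial in elementary symmetric functions of $\{m-i\}_{m=1}^{n}$ and reading off the three surviving powers $(x-1)^{n-2},(x-1)^{n-1},(x-1)^{n}$. Combining the eigenrelation ${\bf E}[f_2(\kappa_r)\,|\,\cdot]=\beta^{2r}f_2(i)$ with (1) determines $\mbox{Var}(\kappa_r\,|\,\cdot)$. I expect two cancellations to govern the result: the $i^2$-terms drop out automatically, and the $i$-terms drop out precisely when $f_2$ is symmetric about $i=\mu$, i.e. $\beta'/\alpha=-2\mu$; verifying this identity — which reduces to $\beta'/\alpha=\frac2p-(n+1)=-2\mu$ — is the crux of the whole theorem and is the step I expect to be most delicate, since it must hold for every $p$ (there is no reflection symmetry of the chain unless $p\in\{1,2\}$). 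Granting it, $\mbox{Var}(\kappa_r\,|\,\cdot)$ is independent of $i$ and equals $(\mu^2-\gamma/\alpha)(1-\beta^{2r})$; a final reduction shows the $1/p$- and $1/p^2$-contributions cancel and $\mu^2-\gamma/\alpha=\frac{n+1}{12}$, giving (2) upon noting $\beta^{2r}=(\pm b)^{-2r}=b^{-2r}$.

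For (3) nothing new is needed beyond (1) and the Markov property. Conditioning on $\kappa_s^{\pm}$ and applying (1) to the remaining $r$ steps gives ${\bf E}[\kappa_{s+r}^{\pm}\,|\,\kappa_s^{\pm}]=\beta^{r}(\kappa_s^{\pm}-\mu)+\mu$. Substituting this into $\mbox{Cov}(\kappa_s,\kappa_{s+r}\,|\,\cdot)={\bf E}[\kappa_s\kappa_{s+r}\,|\,\cdot]-{\bf E}[\kappa_s\,|\,\cdot]\,{\bf E}[\kappa_{s+r}\,|\,\cdot]$ and using the tower property, every term carrying the shift $\mu$ cancels, leaving $\mbox{Cov}(\kappa_s,\kappa_{s+r}\,|\,\cdot)=\beta^{r}\,\mbox{Var}(\kappa_s\,|\,\cdot)$; inserting (2) yields (3). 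The main obstacle is therefore the quadratic computation in (2), namely establishing both the symmetry $\beta'/\alpha=-2\mu$ and the clean value $\frac{n+1}{12}$.
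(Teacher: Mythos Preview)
Your approach is essentially the same as the paper's: both parts (1) and (2) use the right eigenfunctions of degrees one and two extracted from the polynomial identity (\ref{different}), and part (3) is derived from (1), (2) and the Markov/tower property exactly as you describe. The paper short-circuits your quadratic calculation by simply asserting (``after some computation'') that the degree-two eigenfunction is $\tilde u_2(i)=\bigl(i+\tfrac1p-\tfrac{n+1}{2}\bigr)^2-\tfrac{n+1}{12}$, which is precisely the completed-square form encoding your two anticipated identities $\beta'/\alpha=-2\mu$ and $\mu^2-\gamma/\alpha=\tfrac{n+1}{12}$; so the ``delicate step'' you flag is also left as a routine computation in the paper.
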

\begin{theorem}
\label{pistart}
%\cite{NS2}
%
Let 
${\bf E}_{\pi}[ \;\cdot\; ]$
be the expectation value conditioned that  
$\kappa_0^{\pm}$ 
obeys the stationary distribution. 
Then we have
\beq
&(1)& \quad
{\bf E}_{\pi}[ \kappa^{\pm}_0 ] 
=
\frac {n+1}{2} - \frac 1p, 
\\
&(2)& \quad
\mbox{Cov}_{\pi}(\kappa^{\pm}_r, \kappa^{\pm}_0)
=
\frac {1}{(\pm b)^r} \cdot \frac {n+1}{12}.
\eeq
\end{theorem}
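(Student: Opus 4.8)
The plan is to prove Theorem \ref{pistart} by leveraging the conditional results of Theorem \ref{zerostart} together with the explicit stationary distribution, which can be obtained from the eigenstructure already established. The stationary distribution $\pi$ is the left eigenvector of $P_p^{\pm}$ corresponding to the eigenvalue $1$, i.e.\ the top row of $L_p$ (up to normalization), namely $\pi_j \propto v_{0,j}^{(p)}(n)$. By Remark \ref{descent}, when $p \in {\bf N}$ these are the descent-count numbers for $G_{p,n}$, which are symmetric; more importantly, the key fact I expect to need is that under $\pi$ the rescaled variable $\kappa_0^{\pm}$ behaves like a sum of independent pieces whose mean and variance are computable. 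For part (1), I would compute ${\bf E}_{\pi}[\kappa_0^{\pm}] = \sum_j j\,\pi_j$ directly. The cleanest route is to recognize that the stationary mean must be the fixed point of the affine recursion in Theorem \ref{zerostart}(1): setting $r \to \infty$ (or equivalently solving ${\bf E}[\kappa_1^{\pm}\mid\kappa_0^{\pm}=x]$ averaged against $\pi$ equal to ${\bf E}_\pi[\kappa_0^{\pm}]$) forces the mean to equal the constant term $\tfrac{n+1}{2} - \tfrac 1p$, since the coefficient $\tfrac{1}{(\pm b)}$ has absolute value less than $1$.

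For part (2), the central identity I would use is the tower/conditioning decomposition
\[
\mbox{Cov}_\pi(\kappa_r^{\pm}, \kappa_0^{\pm})
= {\bf E}_\pi\!\left[ \kappa_0^{\pm}\,\big(\,{\bf E}[\kappa_r^{\pm}\mid \kappa_0^{\pm}] - {\bf E}_\pi[\kappa_r^{\pm}]\,\big) \right].
\]
Since the chain is stationary, ${\bf E}_\pi[\kappa_r^{\pm}] = {\bf E}_\pi[\kappa_0^{\pm}] = \tfrac{n+1}{2} - \tfrac 1p$. Substituting the explicit conditional mean from Theorem \ref{zerostart}(1), the bracketed quantity becomes $\tfrac{1}{(\pm b)^r}\big(\kappa_0^{\pm} + \tfrac 1p - \tfrac{n+1}{2}\big)$, so the covariance collapses to
\[
\mbox{Cov}_\pi(\kappa_r^{\pm}, \kappa_0^{\pm})
= \frac{1}{(\pm b)^r}\,
{\bf E}_\pi\!\left[\kappa_0^{\pm}\Big(\kappa_0^{\pm} + \tfrac 1p - \tfrac{n+1}{2}\Big)\right]
= \frac{1}{(\pm b)^r}\,\mbox{Var}_\pi(\kappa_0^{\pm}),
\]
the last equality holding because $\tfrac{n+1}{2} - \tfrac 1p = {\bf E}_\pi[\kappa_0^{\pm}]$. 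Thus everything reduces to computing the stationary variance $\mbox{Var}_\pi(\kappa_0^{\pm})$, which should equal $\tfrac{n+1}{12}$.

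The main obstacle will be evaluating $\mbox{Var}_\pi(\kappa_0^{\pm}) = \tfrac{n+1}{12}$. I see two viable approaches. The first is to pass to the limit in Theorem \ref{zerostart}(2): as $r \to \infty$, $\mbox{Var}(\kappa_r^{\pm}\mid\kappa_0^{\pm}=i) \to \tfrac{n+1}{12}$, and since the conditional distribution of $\kappa_r^{\pm}$ converges to $\pi$ regardless of the starting state $i$ (the chain is irreducible and aperiodic, with all nontrivial eigenvalues of modulus $|b|^{-1} < 1$), the conditional variance converges to $\mbox{Var}_\pi(\kappa_0^{\pm})$; this identifies the stationary variance as $\tfrac{n+1}{12}$ without any summation over $\pi_j$. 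The second, more self-contained route is the probabilistic interpretation from our limit theorem in \cite{NS1}: the stationary law of $\kappa_0^{\pm}$ coincides with the distribution of a suitable sum, and the variance $\tfrac{n+1}{12}$ is exactly $(n+1)$ times $\tfrac{1}{12}$, the variance of a uniform random variable on $[0,1]$, reflecting that the stationary carry is governed by the fractional part of a sum of $n+1$ i.i.d.\ uniform variables. The delicate point in either approach is justifying the interchange of limit and expectation (equivalently, uniform convergence of the conditional moments), but this is immediate from the finite state space and the geometric decay rate $|b|^{-r}$ of the non-stationary eigenmodes.
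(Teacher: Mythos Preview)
Your proposal is correct and follows essentially the same approach as the paper: both derive (1) by letting $r\to\infty$ in Theorem \ref{zerostart}(1) via convergence to stationarity on the finite state space. For (2) the paper is slightly more direct, simply sending $s\to\infty$ in Theorem \ref{zerostart}(3) to obtain $\mbox{Cov}_\pi(\kappa_r^{\pm},\kappa_0^{\pm})=\tfrac{1}{(\pm b)^r}\cdot\tfrac{n+1}{12}$ in one stroke, whereas you first reduce to $\tfrac{1}{(\pm b)^r}\mbox{Var}_\pi(\kappa_0^{\pm})$ via the tower property and then identify the stationary variance by letting $r\to\infty$ in Theorem \ref{zerostart}(2); both routes are valid and rest on the same limiting idea.
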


We note that the variances and covariances do not depend on 
$p$. 
For
$(-b)$-case, 
$\kappa_r^-$
and
$\kappa_0^-$
are negatively correlated when 
$r$
is odd.
Theorems \ref{zerostart}, \ref{pistart} 
are proved in Appendix. 
%
%%%%%%%%%%%%%%%%%%%%%%%%%%%%%%%%%%%%%%%%%%%%%
\subsubsection{Relation to a generalized riffle shuffle : $(+b)$-case}
In this subsection 
we assume 
$p \in {\bf N}$ 
and study 
the relation between the 
$(b, n, p)$-carries process and the descent of the (generalized) riffle shuffle. 
First of all, 
we define a ordering on the set 
$\Sigma :=[n]  \times {\bf Z}_p$
as follows
($[n] :=\{ 1, 2, \cdots, n \}$). 
\beq
&&
(1,0) < (2, 0) < \cdots < (n, 0)
\\
<&&
(1, p-1) < (2, p-1) < \cdots < (n, p-1) 
\\
<&&
(1, p-2) < (2, p-2) < \cdots < (n, p-2) 
\\
&&
\cdots 
\\
<&&
(1, 1) <  \cdots < (n, 1).
\eeq
For 
$q \in {\bf Z}_p$
let 
$T_q : \Sigma \to \Sigma$
be a shift given by
$T_q (i, r) := (i, r+q)$, 
$(i, r) \in \Sigma$.
The group 
$G_{p, n} (\simeq {\bf Z}_p \wr S_n)$
of colored permutations 
is the set of bijections 
$\sigma : \Sigma \to \Sigma$ 
such that  
$\sigma \circ T_1  = T_1 \circ \sigma$. 
We set  
$(\sigma(i), \sigma^c(i)) := \sigma(i, 0) \in \Sigma$, 
$i=1, 2, \cdots, n$. 
Then 
$\sigma \in G_{p, n}$
is characterized by 
$\{ ( \sigma(i), \sigma^c (i) ) \}_{i=1}^n$,  
and henceforth we abuse the notation and write 
$\sigma = \left(
(\sigma(1), \sigma^c(1)), (\sigma(2), \sigma^c(2)), \cdots, (\sigma(n), \sigma^c(n))
\right)$.
We say that 
$\sigma
\in 
G_{p, n}$
have a descent at 
$i$ 
if and only if 
$(\sigma(i),\sigma^c(i)) > (\sigma(i+1), \sigma^c(i+1))$
(for $i=1, 2, \cdots, n-1$)
and 
$\sigma^c(n) \ne 0$
(for $i=n$). 
We denote by 
$d(\sigma)$
the number of descents of 
$\sigma$. 

Here we shall consider 
the generalized riffle shuffle.
Roughly speaking, the 
{\bf $(+b, n, p)$-shuffle}
is to carry out the usual 
$b$-shuffle to 
$n$
``cards with $p$ colors", 
but for the
$(jp+r)$-th pile, 
apply  
$T_r$-shift.
To be precise, 
it is defined as follows : 

(i)
Pick up numbers uniformly at random from 
${\cal D}(b) := \{ 0, 1, \cdots, b-1 \}$
$n$ times yielding an array of numbers 
${\bf A} := (a_1, a_2, \cdots, a_n) \in {\cal D}(b)^n$, 
which we call labels, 

(ii)
Sort 
$1,2, \cdots, n \in [n]$
according to the order of the labels 
$a_1, \cdots, a_n$
so that for each 
$i=1, \cdots, n$
we have 
$k_i \in [n]$
corresponding to 
$a_i$
(this is the same procedure to construct the  
$b$-shuffle from its GSR representation), 
and, 

(iii)
If 
$a_i \equiv q_i \in {\bf Z}_p \pmod p$, 
set  
$(\sigma(i), \sigma^c(i)):=(k_i, q_i)$, 
$i=1, \cdots, n$, 
which determines   
$\sigma \in G_{p, n}$. 

\noindent
We denote  
$\sigma$
by 
$\pi_b [{\bf A}] \in G_{p, n}$, 
and call 
${\bf A}=(a_1, \cdots, a_n)$
the GSR representation of 
$\sigma$. 
We show an example of 
$(7,8,3)$-shuffle below.
In this example
$(b, n, p) = (7, 8, 3)$, 
and 
${\bf A} = (4,1,6,3,0,5,0,2)\in {\cal D}(7)^8$.
Then
$(6,3,8,5,1,7,2,4) \in S_8$
is the corresponding element in the $b$-shuffle.
Next, 
$a_1 = 4 \equiv 1 \pmod 3$
so that  
$\sigma^c(1) = 1$, 
and 
$a_2 \equiv 1 \pmod 3$, 
so that 
$\sigma^c (2)=1$,  
and so on.  
We thus obtain  
the pairs of numbers
$(6,1), (3,1), \cdots, (4,2)$ 
in the right column in the table below,  
which  represent the corresponding element 
$\sigma$ in 
$G_{3, 8}$. 
\beq
\begin{array}{cc}
{\bf A} & (\sigma(i), \sigma^c(i)) \\ \hline
4 & (6,1) \\
1 & (3,1) \\
6 & (8,0) \\
3 & (5,0) \\
0 & (1,0) \\
5 & (7,2) \\
0 & (2,0) \\
2 & (4,2) \\ \hline
\end{array}
\eeq
The 
$b$-shuffle
is the special case where 
$p=1$. 
However, 
$(b, n, 2)$-shuffle is slightly different from the 
type B shuffle in \cite{DF2} ; 
In the latter, 
we let the 
$(2j+1)$-th pile upside down
(that is, the order of numbers corresponding to odd 
$a$'s 
are reversed) so that it can be realized as a real card shuffle, 
and that is why our order on 
$[n] \times {\bf Z}_p$ 
(for $p=2$) 
is different from that in 
\cite{DF2}.  
In the example below, we set 
$(b, n, p) = (3, 6, 2)$
and 
${\bf A} = (2, 1, 0, 1, 0, 1)
\in {\cal D}(3)^6$. 
The column in the middle
in the table below  
is the corresponding element 
$\pi_3 [{\bf A}]$ 
in 
$(3, 6, 2)$-shuffle.
The column in the right 
is the corresponding 
type B-shuffle, where 
the 1-st pile 
$(3,4,5)$
is turned down to 
$(5, 4, 3)$. 
\beq
\begin{array}{ccc}
{\bf A} & (3,6,2) & \mbox{type}B \\ \hline
2 & (6,0) & (6,0) \\ 
1 & (3,1) & (5,1) \\
0 & (1,0) & (1,0) \\
1 & (4,1) & (4,1) \\
0 & (2,0) & (2,0) \\
1 & (5,1) & (3,1) \\ \hline
\end{array}
\eeq
Let 
$\{ \sigma_r \}_{r=0}^{\infty}$
be a Markov chain on 
$G_{p, n}$ 
obtained by the repeated applications of 
$(+b, n, p)$-shuffles with  
$\sigma_0 = id$.
For simplicity, 
we shall call 
$\{ \sigma_r \}$
{\bf a sequence induced by  
$(b, n, p)$-shuffle}. 
Using Lemma \ref{shufflecounting}
we can see that 
$\{ \sigma_r \}$
is irreducible and aperiodic. 

Let 
$\{ \kappa_r^+ \}_{r=0}^{\infty}$ 
be the 
$(+b, n, p)$-carries 
process with 
$\kappa_0^+ = 0$.
Then we have the following theorem,  
which is a generalization of the results by Diaconis-Fulman\cite{DF1}.
\begin{theorem}
\label{generalshuffle}
%\cite{NS2}
%
Let 
$p \in {\bf N}$, 
$b \equiv 1 \pmod p$. 
Then we have 
$\{ \kappa_r^+ \} \stackrel{d}{=} \{ d(\sigma_r) \}$. 
\end{theorem}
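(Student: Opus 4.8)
The plan is to show that both $\{ \kappa_r^+ \}$ and $\{ d(\sigma_r) \}$ are Markov chains on ${\cal C}_p(n)$ with the same transition matrix $P_p^+$ and the same initial state, so that their laws as processes coincide. The carries side is immediate: $\{ \kappa_r^+ \}$ is by construction Markov with transition matrix $P_p^+$, and $\kappa_0^+ = 0 = d(id) = d(\sigma_0)$. Thus everything reduces to proving that the descent process $\{ d(\sigma_r) \}$ is Markov and that ${\bf P}(d(\sigma_{r+1}) = j \mid d(\sigma_r) = i) = P_p^+(i,j)$.

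First I would establish the semigroup (convolution) property of generalized shuffles: if $\pi$ is an $(a,n,p)$-shuffle and $\tau$ an independent $(b,n,p)$-shuffle, then $\tau \circ \pi$ is an $(ab,n,p)$-shuffle. Working from the GSR representation, I would encode the two rounds of labels in ${\cal D}(a)$ and ${\cal D}(b)$ as a single mixed-radix label in ${\cal D}(ab)$, with colors adding in ${\bf Z}_p$ via $T_{q'} \circ T_q = T_{q+q'}$, and check that sorting $[n]$ by these composite labels reproduces $\tau \circ \pi$. Iterating, $\sigma_r$ is a $(b^r, n, p)$-shuffle whose label of each card is a base-$b$ string of length $r$, and passing from $\sigma_r$ to $\sigma_{r+1}$ appends one fresh base-$b$ digit to every label. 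This digit-append picture is exactly what turns the descent dynamics into a carry.

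Next I would compute the one-step descent transition. By Lemma \ref{shufflecounting} (equivalently the second line of (\ref{different})), the number of GSR representations in ${\cal D}(b)^n$ of a fixed $\sigma \in G_{p,n}$ depends on $\sigma$ only through $d(\sigma^{-1})$, and summing these counts according to the resulting descent reproduces the entries of $P_p^+$ in Proposition \ref{prob}. Writing $\sigma_{r+1} = \pi_{r+1} \circ \sigma_r$ with $\pi_{r+1}$ a fresh $(b,n,p)$-shuffle and using the digit-append description, I would show that the number of choices of the new digit string carrying an arrangement of descent $i$ to one of descent $j$ is the same for every $\sigma_r$ with $d(\sigma_r) = i$ and equals $b^n P_p^+(i,j)$. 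The first assertion (independence of the representative, not merely of $i$) is the uniformity that yields the Markov property; the second identifies the count with the carries transition matrix, which after the change of variables leading to (\ref{bnp}) matches the law of $\kappa_{r+1}^+$ given $\kappa_r^+ = i$.

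The main obstacle is the Markov property, namely proving that conditionally on $d(\sigma_r) = i$ the law of $d(\sigma_{r+1})$ does not depend on which $\sigma_r$ of descent $i$ occurred. I expect to obtain this from a symmetry statement: among $(b^r, n, p)$-shuffles the conditional distribution of the appended-digit data, given the current descent number, is exchangeable enough that only $i$ survives. The secondary difficulty, absent in the classical $p=1$ case of Diaconis--Fulman, is bookkeeping the ${\bf Z}_p$-coloring: one must verify that the shift $T_q$ together with the nonstandard order on $[n] \times {\bf Z}_p$ makes the colored descent at step $r+1$ correspond precisely to the carry $\kappa_{r+1}^+$ generated by the extra summand $A_+(b) = (b-1)/p^*$ in (\ref{bnp}), rather than to the type B order of \cite{DF2}.
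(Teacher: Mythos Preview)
Your plan differs from the paper's in a crucial way, and the step you flag as ``the main obstacle'' is a genuine gap.

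The paper does \emph{not} attempt to prove that $\{d(\sigma_r)\}$ is a Markov chain and then match transition probabilities. Instead, it works on the common sample space ${\cal D}(b)^{Nn}$ and builds an explicit measure-preserving bijection
\[
\{X_i^{(j)}\}_{i,j} \;\longmapsto\; ({\bf A}_N,\ldots,{\bf A}_1)
\]
(bar map $\to f_{b^N} \to (\star)^{-1}$) such that $\kappa_j = d(\sigma_j)$ holds pointwise for every $j\le N$. The equality of joint laws, and hence the Markov property of $\{d(\sigma_r)\}$, then fall out as consequences rather than hypotheses.

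Your route instead requires, for every $\sigma\in G_{p,n}$ with $d(\sigma)=i$, that
\[
\sharp\{{\bf A}\in{\cal D}(b)^n : d(\pi_b[{\bf A}]\circ\sigma)=j\}
\]
depend only on $i$ (and equal $b^n P_p^+(i,j)$). You acknowledge this is the heart of the matter but offer only ``a symmetry statement'' and the hope that the appended-digit data is ``exchangeable enough''. This is not a proof: the descent of $\pi_b[{\bf A}]\circ\sigma$ depends on the full descent \emph{set} and color pattern of $\sigma$, and showing that the count collapses to a function of $|D|=i$ alone is precisely the nontrivial content. Neither Lemma~\ref{shufflecounting} nor Lemma~\ref{Gessel} gives this directly (the latter fixes the product and counts factorizations, which is a different uniformity). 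Without a concrete mechanism --- and the paper's mechanism is exactly the bar map composed with the digit bijection $f_b(x)=px\bmod b$ that converts additive carries into colored descents --- your argument does not close. The ``bookkeeping the ${\bf Z}_p$-coloring'' you mention is not a side issue; the map $f_b$ is what makes the whole thing work, and it has to be applied to the cumulative sums $\overline{\bf X}$, not to the shuffle labels.
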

Since 
$\{ \sigma_r \}$
has uniform stationary distribution, 
Theorem \ref{generalshuffle}
gives alternative proof of the fact that the stationary distribution of 
$(b, n, p)$-carries process 
gives the descent statistics of 
$G_{p, n}$ 
for 
$p \in {\bf N}$. 
For the proof 
of Theorem \ref{generalshuffle}, 
we construct a bijection from the summand of 
$(b, n, p)$-carries process to the GSR representations of 
$(b, n, p)$-shuffle. 
In addition to 
the argument in \cite{DF1}, 
we need to introduce  a bijection 
$f_b$ 
on 
${\cal D}(b)$ 
to relate the carries in the summands 
to the descents of elements of 
$G_{p, n}$. 
%
%
%%%%%%%%%%%%%
\subsubsection{Relation to a generalized riffle shuffle : $(-b)$-case}
We consider a ``shuffle" 
corresponding to the 
carries process with negative base. 
We begin by simpler ones which holds for $p=1,2$ only. 
For given 
$\sigma = (\sigma(1), \cdots, \sigma(n)) \in S_n$, 
let
$R_1 \sigma \in S_n$ 
\[
(R_1 \sigma) (k) := n + 1 - \sigma(k), 
\quad
k = 1, 2, \cdots, n
\]
be its ``reverse".
Let 
\[
S_1^- := R_1 \circ \mbox{($(+b, n, 1)$-shuffle)}
\]
be the operation of carrying out the reverse  after a $(+b, n, 1)$-shuffle. 
Let 
$\{ \tilde{\sigma}_r \}_{r=0}^{\infty}$
($\tilde{\sigma}_r  := (S_1^-)^r \sigma_0$, 
$r = 1, 2, \cdots$)
be the corresponding Markov chain on 
$S_n$ 
with  
$\sigma_0 = id$, 
and let 
%Let 
$\{ \kappa^{-}_r \}_{r=0}^{\infty}$
be the 
$(-b, n, 1)$-carries process
with  
$\kappa_0^{-} = 0$.
Then we have a counterpart of 
Theorem \ref{generalshuffle}. 
\begin{theorem}
\label{p1}
Let 
$p=1$. 
Then we have 
$\{ \kappa^{-}_r \}
\stackrel{d}{=}
\{ d(\tilde{\sigma}_r) \}$. 
\end{theorem}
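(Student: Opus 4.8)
The plan is to reduce Theorem \ref{p1} to the already-proved Theorem \ref{generalshuffle} by carefully tracking how the ``reverse'' operation $R_1$ interacts with the $(+b,n,1)$-shuffle and how this matches the substitution $C_k^- \mapsto n - C_k^-$ that converts the $(-b)$-carries process into a $(+b)$-type process. Recall from equation (\ref{bnp}) that, in the change of variables, the $(-b)$-case reads $\kappa_{k-1}^- + Y_1 + \cdots + Y_n + A_-(b) = (n - \kappa_k^-)b + s$; that is, up to the reflection $\kappa_k^- \mapsto n - \kappa_k^-$ and the extra summand $A_-(b) = (b+1)/p - 1$, the negative-base carries process is the positive-base one viewed ``backwards.'' For $p=1$ we have $A_-(b) = b$, so the reflection is the only essential difference. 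My first step is therefore to make this reflection precise at the level of the shuffle and to show that $R_1$ is exactly the combinatorial shadow of $\kappa \mapsto n - \kappa$ on descents.

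First I would establish the descent bookkeeping for the reverse map. For $\sigma \in S_n$, the reversal $(R_1\sigma)(k) = n+1 - \sigma(k)$ sends each ascent of $\sigma$ to a descent of $R_1\sigma$ and vice versa, so that $d(R_1\sigma) = (n-1) - d(\sigma)$. Composing with the $(+b,n,1)$-shuffle, a single application of $S_1^-$ produces a permutation whose descent count is the ``complement'' of the descent count of the intermediate $(+b)$-shuffle output. I would then iterate: writing $\tilde\sigma_r = (S_1^-)^r \sigma_0 = (R_1 \circ \pi_b)^r(\mathrm{id})$, I would unwind the composition so that the reversals can be collected. Because $R_1$ is an involution and because its effect on a subsequent shuffle is to flip the roles of larger and smaller labels, the plan is to show that $R_1$ conjugates the $(+b,n,1)$-shuffle into the operation that the $(-b)$-base forces in (\ref{bnp}), namely reading the carry digit as $n - \kappa$.

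The core step is to build, exactly as announced for Theorem \ref{generalshuffle}, a bijection between the summands arising in the $(-b,n,1)$-carries process (the equation $\kappa_{k-1}^- + Y_1 + \cdots + Y_n + b = (n - \kappa_k^-)b + s$) and the GSR representations of the shuffle sequence $\{\tilde\sigma_r\}$. Here I would reuse the bijection $f_b$ on ${\cal D}(b)$ introduced in the proof of Theorem \ref{generalshuffle} to match carries in the summands with descents; the difference is that the reflection $\kappa \mapsto n - \kappa$ in (\ref{bnp}) must be carried through $f_b$, and I expect it to correspond precisely to the label reversal $a \mapsto b - 1 - a$ that underlies $R_1$. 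Once the single-step correspondence $\kappa_1^- \stackrel{d}{=} d(\tilde\sigma_1)$ is verified starting from $\kappa_0^- = 0$, $\sigma_0 = \mathrm{id}$, the Markov property and the fact that both $\{\kappa_r^-\}$ and $\{d(\tilde\sigma_r)\}$ have transition mechanisms driven by independent uniform labels gives the equality of the full processes $\{\kappa_r^-\} \stackrel{d}{=} \{d(\tilde\sigma_r)\}$ by induction on $r$.

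The main obstacle I anticipate is the alignment of two reflections. On the carries side the reflection acts on the state $\kappa_k^-$ (via $n - \kappa_k^-$), while on the shuffle side the reversal $R_1$ acts on the values $\sigma(k)$ and simultaneously flips descents into ascents. Showing that these two reflections are compatible through $f_b$ — so that the extra summand $A_-(b) = b$ and the sign changes do not introduce an off-by-one mismatch in the descent count at the last position (where, for the colored case $p>1$, the boundary rule $\sigma^c(n)\ne 0$ enters, but which for $p=1$ degenerates) — is the delicate point. I would handle it by checking the boundary digit $k=n$ separately and confirming that $d(R_1\sigma)$ and the reflected carry agree there, after which the interior positions follow from the same combinatorics already used for Theorem \ref{generalshuffle}.
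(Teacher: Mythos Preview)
Your overall direction --- reduce to Theorem \ref{generalshuffle} via the reflection $d(R_1\sigma)=(n-1)-d(\sigma)$ --- is right, but the route you sketch is both harder than the paper's and contains a genuine gap in the induction step.

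The gap is your sentence ``Once the single-step correspondence $\kappa_1^-\stackrel{d}{=}d(\tilde\sigma_1)$ is verified \ldots\ the Markov property \ldots\ gives the equality of the full processes by induction on $r$.'' The process $\{\kappa_r^-\}$ is Markov, but $\{d(\tilde\sigma_r)\}$ is \emph{not known to be Markov a priori}; that is a consequence of the theorem, not an input. A one-step marginal equality together with Markovianity of only one side does not propagate. To make an inductive argument work you would need equality of all finite-dimensional joint laws, which is exactly what has to be proved. Your alternative plan --- rebuilding the full $N$-step bijection as in the proof of Theorem \ref{generalshuffle}, now with the label reversal $a\mapsto b-1-a$ inserted --- can be made to work, but that is substantially more than a single-step check.

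The paper avoids all of this by a short transition-matrix identity. It first proves (Proposition \ref{plusminus1}) that $P_1^-(i,j)=P_1^+(i,n-1-j)=P_1^+(n-1-i,j)$. Writing $r(i)=n-1-i$, this immediately gives
\[
{\bf P}\bigl(\kappa_1^-=j_1,\ldots,\kappa_N^-=j_N\mid\kappa_0^-=0\bigr)
={\bf P}\bigl(\kappa_1^+=r(j_1),\kappa_2^+=j_2,\kappa_3^+=r(j_3),\ldots\mid\kappa_0^+=0\bigr),
\]
with reversals on the odd coordinates. Now Theorem \ref{generalshuffle} (the $p=1$ case, which applies since $b\equiv 1\pmod 1$ always) converts the right-hand side to the joint law of $(d(\sigma_1),d(\sigma_2),\ldots)$ for the ordinary $(+b,n,1)$-shuffle sequence, with the same pattern of reversals. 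Finally, since $\tilde\sigma_r=(\sigma_r)^R$ for odd $r$ and $\tilde\sigma_r=\sigma_r$ for even $r$, and $d(\sigma^R)=r(d(\sigma))$, the reversals cancel and one obtains the joint law of $(d(\tilde\sigma_1),d(\tilde\sigma_2),\ldots)$. No new bijection is built and no Markov assumption on the descent side is needed.
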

Similar result
also holds for 
$p=2$
case where 
$R_1$, $S_1^-$ 
are replaced by 
\[
(R_2 \sigma) (i, 0)
:=
(n+1-\sigma(i), \sigma^c(i)+1), 
\;
S_2^- := R_2 \circ \mbox{($(+b, n, 2)$-shuffle)}.
\]
For the proof of 
Theorem \ref{p1}, 
we construct a connection between 
$(+b, n, p)$-carries process and 
$(-b, n, p)$-carries process, 
and use Theorem \ref{generalshuffle}. 
%

%%%%%%%
To state the results for general integer  $p$'s, 
we need to introduce another notion of descent on 
$G_{p, n}$. 
We define the ``dash-order" $<'$ on 
$\Sigma$ : 
\beq
&&
(1,0) <' (2,0) <' \cdots <' (n,0) \\
<'&&
(1,1) <' (2,1) <' \cdots <' (n,1) \\
<'&&
\cdots 
\\
<'&&
(1, p-1) <' (2,p-1) <' \cdots <' (n, p-1)
\eeq
we say that 
$\sigma \in G_{p, n}$
has a dash-descent at 
$i$ 
if and only if 
$(\sigma(i), \sigma^c(i)) >' (\sigma(i+1), \sigma^c(i+1))$
(for 
$i=1, 2, \cdots, n-1$) 
and 
$\sigma^c(n)=p-1$
(for 
$i=n$). 
We denote by 
$d'(\sigma)$ 
the number of dash-descents of 
$\sigma$. 
For
$p=1$, 
$d(\sigma) = d'(\sigma)$. 
\begin{remark}
Let us denote the corresponding descent statistics by 
\beq
E_p(n, k) := 
\sharp \{ \sigma \in G_{p, n} \, | \, 
d(\sigma) = k \},
\quad
F_p(n, k) := 
\sharp \{ \sigma \in G_{p, n} \, | \, 
d'(\sigma) = k \}. 
\eeq
Then 
$F_p$ 
satisfies the recursion relation 
\beq
F_p (n,k) = 
(pk+p-1) F_p(n-1, k)
+
\{ p(n-k)+1 \} 
F_p (n-1, k-1)
\eeq
from which we have
\beq
F_p(n,k) = E_p(n, n-k),
\quad
p \ne 1.
\eeq
In this sense 
$d'(\sigma)$
is a ``reverse" of 
$d(\sigma)$. 
\end{remark}
Using the notion of 
dash-descent, 
we can state our theorem on the relation between 
$(-b, n, p)$-carries proccess 
$\{ \kappa_r^- \}$ 
with 
$\kappa^-_0=0$ 
and the sequence 
$\{ \sigma_r \}$
induced by 
$(b, n, p)$-shuffle.
%
%%%%%
\begin{theorem}
\label{generalminusshuffle}
Let 
$p \in {\bf N}$, 
$p \ne 1$
and 
$b \equiv -1 \pmod p$. 
Then 
$\{ \kappa_r^- \} \stackrel{d}{=}\{ d_r \}$, 
where 
\beq
d_r
\stackrel{}{:=}
\cases{
n - d'(\sigma_r) 
& $(r : odd)$ \cr
d(\sigma_r) 
& $(r : even)$ \cr
}
\eeq
\end{theorem}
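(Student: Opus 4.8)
The plan is to realize the $(-b,n,p)$-carries process $\{\kappa_r^-\}$ and the statistic $\{d_r\}$ on a single probability space, via a measure-preserving bijection between the random summands $Y_1,\dots,Y_n\in{\cal D}(b)$ fed to the carries recursion at each step and the GSR labels ${\bf A}$ fed to the $(b,n,p)$-shuffle at each step, so that $\kappa_r^-=d_r$ holds pathwise for all $r$; matching the pushed-forward laws then yields $\{\kappa_r^-\}\stackrel{d}{=}\{d_r\}$ at the level of processes. This parallels the proof of Theorem~\ref{generalshuffle}, and I would reuse the bijection $f_b$ on ${\cal D}(b)$ introduced there to convert the arithmetic of carries among the summands into descent data of a colored permutation. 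Two new features are forced by the negative base. First, Theorem~\ref{generalshuffle} cannot simply be quoted: for $p\ge 3$ the hypothesis $b\equiv -1\pmod p$ makes the $(+b,n,p)$-carries process undefined (it requires $b\equiv 1\pmod p$), so the correspondence with the shuffle must be built directly from the negative-base recursion in (\ref{bnp}). Second, that recursion carries the reflection $\kappa_k^-\mapsto n-\kappa_k^-$, and iterating it exhibits the accumulated base as $(-b)^r=(-1)^r b^r$; the sign $(-1)^r$ is precisely what forces the parity split in the statement.

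The conceptual engine is the elementary fact that $b\equiv -1\pmod p\Rightarrow b^2\equiv 1\pmod p$. Composing the $b$-shuffle with itself gives a $b^2$-shuffle, so after an even number $r$ of steps $\sigma_r$ is distributed as a single $(b^r,n,p)$-shuffle with the legitimate positive base $b^r\equiv 1\pmod p$; applying Theorem~\ref{generalshuffle} with base $b^r$ then identifies the accumulated carry with the ordinary descent $d(\sigma_r)$, read off in the order $<$, with $\kappa_0^-=0=d(id)$ matching the initial conditions. For odd $r$ the accumulated base $(-b)^r=-b^r$ is negative, so one extra $(-b)$-step, and with it one application of the reflection $n-\kappa$, converts the positive orientation into a negative one. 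I would show that in this negative orientation the carry is read off in the color-reversed order $<'$, and hence equals the reversed count $n-d'(\sigma_r)$.

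The bridge between the two orientations is that $<'$ is exactly the color-reversal of $<$ (colors appearing as $0<'1<'\cdots<'p-1$ rather than $0<p-1<\cdots<1$), together with the identity $F_p(n,k)=E_p(n,n-k)$ for $p\ne 1$ recorded in the preceding remark, which says that $n-d'$ and $d$ carry the same descent distribution. Using these I would verify that the single reflection $\kappa\mapsto n-\kappa$ appearing in (\ref{bnp}) corresponds, step by step along the coupled trajectory, to replacing $<$ by $<'$ and applying the count-reflection $k\mapsto n-k$, so that the alternating statistic $d_r$ emerges coherently from one fixed shuffle sequence $\{\sigma_r\}$.

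I expect the main obstacle to be precisely this last identification at the joint, rather than marginal, level: it is easy to match each $\kappa_r^-$ with $d_r$ in distribution for fixed $r$ (using $F_p(n,k)=E_p(n,n-k)$), but one must check that the bijection $f_b$ and the reflection interleave consistently over the whole sequence, so that the positive orientation at even steps and the negative orientation at odd steps are produced by one and the same coupling of the underlying GSR data. Verifying this compatibility — essentially that the reflection commutes appropriately with the step-to-step dynamics, so that the even and odd reductions arise from the same labels — is the technical heart of the argument. The remaining ingredients (measure preservation of the bijection, and the irreducibility and aperiodicity bookkeeping) are routine given Lemma~\ref{shufflecounting} and the analysis already carried out for Theorem~\ref{generalshuffle}.
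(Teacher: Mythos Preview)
Your outline matches the paper's strategy: build a single bijection on the full array of summands/labels so that $\kappa_r^-=d_r$ pathwise, reusing $f_b$ and handling the parity via $b^r\equiv(-1)^r\pmod p$. The paper resolves exactly the compatibility issue you flag as ``the technical heart'' with two concrete devices you have not named. First, before applying the bar map and $f_{b^N}$, it \emph{reverses the digits in every even column}, $X\mapsto X':=b-1-X$; Lemma~\ref{reverse} shows this converts the $(-b)$-recursion $\kappa_{k-1}^-+\sum Y_i+A_-(b)=(n-\kappa_k^-)b+s$ into an ordinary positive-base carry equation with $A_-(b)'$ in place of $A_-(b)$, so that the whole $N$-digit block is now a single carries computation to which the bar map applies directly (Lemma~\ref{A}). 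Second, when inverting the $*$-map to recover the step-by-step GSR data, the relevant composition law is not Lemma~\ref{star} but its $b_1\equiv -1$ analogue (Lemma~\ref{compositiondash}): $\pi_{b_2b_1}[({\bf A}_2{\bf A}_1)^\sharp]=\pi_{b_2}[{\bf A}_2]'\circ\pi_{b_1}[{\bf A}_1]$, where $\sigma'$ negates all colors. This color-negation, applied at alternating steps, is what produces one fixed sequence $\{\sigma_r\}$ whose ordinary descents $d$ appear at even times and whose dash-descents $d'$ appear (reflected) at odd times.

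By contrast, the identity $F_p(n,k)=E_p(n,n-k)$ you invoke is only a marginal counting fact and will not by itself give the joint coupling; the paper does not use it in the proof. Once you insert the even-column digit reversal and the $\sigma'$ composition rule, the rest of your plan (bar map, $f_{b^N}$, then $*^{-1}$) goes through exactly as in the positive-base case.
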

For 
$p=1$, 
$n - d(\sigma_r)$
in the definition of 
$d_r$ 
should be replaced by 
$n-1-d(\sigma_r)$. 
Theorem \ref{generalminusshuffle}
implies that 
$d_r$
is a Markov chain on 
${\cal C}_p (n)$. 

So far 
we studied the 
$(\pm b, n, p)$-process 
and showed that, if 
$p \in {\bf N}$, 
they are related to 
the descent statistics(Remark \ref{descent}), 
the Foulkes character table(Remark \ref{Foulkes}), 
and the sequence induced by  
$(+b, n, p)$-shuffle on   
$G_{p, n}$
(Theorems \ref{generalshuffle}, \ref{p1}, \ref{generalminusshuffle}).
It is desirable to consider the counterparts (if any) for 
$p \notin {\bf N}$. 

In the following sections, 
we prove theorems stated above. 
%

%%%%%%%%%%%%%%%%%%%%%%%%%%%%%%%%%%%%%%%%%%%%
%
\section{Right Eigenvectors and Their Duality}
\subsection{Right eigenvectors}
{\it Proof of Theorem \ref{right}}\\
It suffices to show 
$R_p = L_p^{-1}$. 
In what follows 
we omit $n$-dependence and compute
\beq
\sum_{m=0}^n
u_{im}^{(p)}
v_{m, j}^{(p)} 
&=&
\sum_{k=i}^n
\frac {1}{k!}
\left( \begin{array}{c}
n-i \\ n-k
\end{array} \right)
\sum_{r=0}^j
(-1)^r
\left( \begin{array}{c}
n+1 \\ r
\end{array} \right)
\sum_{l=0}^k
\frac {s(k, l)}{p^l}
\\
&& 
\times
\sum_{m=n-l}^n
(-1)^{n-m-l}
\left( \begin{array}{c}
l \\ n-m
\end{array} \right)
\{ p(j-r) + 1 \}^{n-m}.
\eeq
Here 
we changed the order of summation noting that 
$n-m \le l \le k$
implies 
$n-l \le m \le n$.
By the binomial theorem, 
we have
\[
\sum_{m=n-l}^n
(-1)^{n-m-l}
\left( \begin{array}{c}
l \\ n-m
\end{array} \right)
\{ p(j-r) + 1 \}^{n-m}
=
\{ p (j-r) \}^l.
\]
It follows that 
\beq
\sum_{m=0}^n
u_{im}^{(p)}
v_{m, j}^{(p)}
&=&
\sum_{k=i}^n
\frac {1}{k!}
\left( \begin{array}{c}
n-i \\ n-k
\end{array} \right)
\sum_{r=0}^j
(-1)^r 
\left( \begin{array}{c}
n+1 \\ r
\end{array} \right)
\sum_{l=0}^k
s(k,l)(j-r)^l.
\eeq
It is well known that 
$s(n, k)$
satisfies the following equation
\[
%\begin{equation}
x(x-1)(x-2) \cdots (x - n+1)
=
n!
\left( \begin{array}{c}
x \\ n
\end{array} \right)
=
\sum_{k \ge 0} s(n, k) x^k
%\quad\cdots (1)
%\label{Stirling}
%\end{equation}
\]
which implies 
\[
\sum_{l=0}^k
s(k,l)(j-r)^l
=
k!
\left( \begin{array}{c}
j-r \\ k
\end{array} \right).
\]
Therefore
\beq
\sum_{m=0}^n
u_{im}^{(p)}
v_{m, j}^{(p)}
&=&
\sum_{r=0}^j
(-1)^r
\left( \begin{array}{c}
n+1 \\ r
\end{array} \right)
\sum_{k=i}^n
\left( \begin{array}{c}
n-i \\ n-k
\end{array} \right)
\left( \begin{array}{c}
j- r \\ k
\end{array} \right)
\\
&=&
\sum_{r=0}^j
(-1)^r
\left( \begin{array}{c}
n+1 \\ r
\end{array} \right)
\left( \begin{array}{c}
n-i + j -r \\ n
\end{array} \right).
\eeq
Here we used the formula 
$
\left( \begin{array}{c}
a+b \\ n
\end{array} \right)
=
\sum_i
\left( \begin{array}{c}
a \\ i
\end{array} \right)
\left( \begin{array}{c}
b \\ n-i
\end{array} \right).
$
Since RHS 
is equal to the coefficient of 
$x^j$ 
in
$(1-x)^{n+1}
\times
x^i (1-x)^{-(n+1)}$, 
we have 
\[
\sum_{m=0}^n
u_{im}^{(p)}
v_{m, j}^{(p)}
 = \delta_{ij}.
\]
\QED\\

\noindent
{\it Proof of Theorem \ref{StirlingFrobenius}}\\
We will make use of following formulas : 
\begin{eqnarray}
&&
\left(
\begin{array}{c}
n \\ k
\end{array}
\right)
=
\left(
\begin{array}{c}
n-1 \\ k
\end{array}
\right)
+
\left(
\begin{array}{c}
n-1 \\ k-1
\end{array}
\right)
\label{binomialone}
\\
&&
\frac nk
\left(
\begin{array}{c}
n-1 \\ k-1
\end{array}
\right)
=
\left(
\begin{array}{c}
n \\ k
\end{array}
\right)
\label{binomialtwo}
\\
&&
s(k,l)
=
s(k-1, l-1) - (k-1)s(k-1, l)
\label{Stirlingrecursion}
\end{eqnarray}
$w_0(0) = 1$
is clear.
Our strategy is 
to decompose 
$w_j(n)$ 
such as : 
\beq
w_j(n) = A + B, \;
B = C+D, \;
D = E-F, \;
E = G+H
\eeq
and to show 
\beq
A = pn w_j(n-1), \;
G = - w_j(n-1), \;
H = w_{j-1}(n-1), \;
F = C.
\eeq
We first apply (\ref{binomialone}).
\beq
w_j(n)
&=&
n! p^n
\sum_{k=0}^n
\sum_{l=j}^k
\frac {s(k,l)(-1)^{j-l}}{k!p^l}
\left(
\begin{array}{c}
l \\ j
\end{array}
\right)
\left\{
\left(
\begin{array}{c}
n-1 \\ k
\end{array}
\right)
+
\left(
\begin{array}{c}
n-1 \\ k-1
\end{array}
\right)
\right\}
\\
&=:& A + B
\eeq
Then we have 
$A = pn w_j(n-1)$. 
Next we apply 
(\ref{binomialtwo}), (\ref{binomialone}) 
to $B$. 
\beq
B
&=&
(n-1)! p^n
\sum_{k=0}^n
\sum_{l=j}^k
\frac {s(k,l)(-1)^{j-l}}{(k-1)!p^l}
\left(
\begin{array}{c}
l \\ j
\end{array}
\right)
\left(
\begin{array}{c}
n \\ k
\end{array}
\right)
\\
&=&
(n-1)! p^n
\sum_{k=0}^n
\sum_{l=j}^k
\frac {s(k,l)(-1)^{j-l}}{(k-1)!p^l}
\left(
\begin{array}{c}
l \\ j
\end{array}
\right)
\left\{
\left(
\begin{array}{c}
n-1 \\ k
\end{array}
\right)
+
\left(
\begin{array}{c}
n-1 \\ k-1
\end{array}
\right)
\right\}
\\
&=:& C + D
\eeq
Letting 
$k'=k-1$, $l'=l-1$
in 
$D$, 
and then apply 
(\ref{Stirlingrecursion}). 
\beq
D
&=&
(n-1)! p^{n-1}
\sum_{k'=0}^{n-1}
\sum_{l'=j-1}^{k'}
\frac {s(k'+1,l'+1)(-1)^{j-l'-1}}{k'!p^{l'}}
\left(
\begin{array}{c}
l'+1 \\ j
\end{array}
\right)
\left(
\begin{array}{c}
n-1 \\ k'
\end{array}
\right)
\\
&=&
(n-1)! p^{n-1}
\sum_{k'=0}^{n-1}
\sum_{l'=j-1}^{k'}
\frac {
\left(
s(k',l')-k's(k', l'+1)
\right)
(-1)^{j-l'-1}
}
{k'!p^{l'}}
\times
\\
&& \qquad\times
\left(
\begin{array}{c}
l'+1 \\ j
\end{array}
\right)
\left(
\begin{array}{c}
n-1 \\ k'
\end{array}
\right)
\\
&=:& E - F
\eeq
We apply 
(\ref{binomialone}) to $E$. 
\beq
E
&=&
(n-1)! p^{n-1}
\sum_{k=0}^{n-1}
\sum_{l=j-1}^{k}
\frac {s(k,l)(-1)^{j-l-1}}{k!p^{l}}
\left(
\begin{array}{c}
l+1 \\ j
\end{array}
\right)
\left(
\begin{array}{c}
n-1 \\ k
\end{array}
\right)
\\
&=&
(n-1)! p^{n-1}
\sum_{k=0}^{n-1}
\sum_{l=j-1}^{k}
\frac {s(k,l)(-1)^{j-l-1}}{k!p^{l}}
\left\{
\left(
\begin{array}{c}
l \\ j
\end{array}
\right)
+
\left(
\begin{array}{c}
l \\ j-1
\end{array}
\right)
\right\}
\left(
\begin{array}{c}
n-1 \\ k
\end{array}
\right)
\\
&=:& G + H
\eeq
Then we have 
$G = - w_j (n-1)$, 
$H = w_{j-1}(n-1)$. 
Letting 
$l'' = l'+1$ 
in 
$F$, we have 
$F = C$. 
\QED
%

%
%%%%%%%
\subsection{Some examples}
In this subsection
we give some examples of 
$R_p$. 
To simplify the notation, let 
$\tilde{R}_p := n! p^n R_p$.
In the examples below, 
we take 
$n=3$. 
\beq
\tilde{R}_1 &=& \left(
\begin{array}{ccc}
1 & 3 & 2 \\
1 & 0 & -1 \\
1 & -3 & 2 
\end{array}
\right), 
\quad
\tilde{R}_2 = \left(
\begin{array}{cccc}
1 & 9 & 23 & 15 \\
1 & 3 & -1 & -3 \\
1 & -3 & -1 & 3 \\
1 & -9 & 23 & -15
\end{array}
\right), 
\\
\tilde{R}_3
&=&
\left(
\begin{array}{cccc}
1 & 15 & 66 & 80 \\
1 & 6 & 3 & -10 \\
1 & -3 & -6 & 8 \\
1 & -12 & 39 & -28 
\end{array}
\right), 
\quad
\tilde{R}_{3/2}
=
\left(
\begin{array}{cccc}
1 & 6 & \frac {39}{4} & \frac 72 \\
1 & \frac 32 & - \frac 32 & -1 \\
1 & -3 & \frac 34 & \frac 54 \\
1 & - \frac {15}{2} & \frac {33}{2} & - 10
\end{array}
\right).
\eeq
The 1st row  vectors (in reversed order)
$(2, 3, 1)$, 
$(15, 23, 9, 1)$, $(80, 66, 15, 1)$
of 
$\tilde{R}_1, \tilde{R}_2$, $\tilde{R}_3$ 
are the Stirling Frobenius numbers of parameter 
$p$ 
for 
$p=1, 2, 3$
respectively. 

Each column vectors of 
$\tilde{R}_2$
is symmetric up to sign, and 
if we multiply
each column vectors of 
$\tilde{R}_{3/2}$
by 
$1, (-2), (-2)^2, (-2)^3$, 
and then turn them over, 
we obtain 
$\tilde{R}_3$. 
They are the duality relation mentioned in Remark \ref{dualityR}.
%

%%%%%%%%%%%%%%%%%%%%%%%%%%%%%%%%%%
\section{Relation to Riffle Shuffles : $(+b)$-case}
%
%%%%%%%%%%%%%%%%%%%%%%%%%
\subsection{Preliminaries}
First of all, 
we shall recall the definition of 
$*$-map 
introduced in 
\cite{DF1} 
and study its relation to 
$(b, n, p)$-shuffle.
Let 
$
{\cal D}(b)
:=
\{0, 1, \cdots, b-1\}
$
and for 
$b_1, b_2 \in {\bf N}$, 
let 
$
{\bf A}_1 
:= ^t
(a_1^{(1)}, \cdots, a_n^{(1)}) \in {\cal D}(b_1)^n
$, 
$
{\bf A}_2 
:= ^t
(a_1^{(2)}, \cdots, a_n^{(2)}) \in {\cal D}(b_2)^n.
$
The star map 
$({\bf A}_2, {\bf A}_1) \mapsto ({\bf A}_2{\bf A}_1)^*=
\{ a_{i, *}^{(j)} \}_{i=1, \cdots, n}^{j=1,2}
\in 
{\cal D}(b_2) \times {\cal D}(b_1)$
is defined as follows : 
$a_{i, *}^{(1)} := a_{i}^{(1)}$, 
$i=1, \cdots, n$
and 
$a_{1,*}^{(2)}, \cdots, a_{n,*}^{(2)}$
is defined by sorting 
$a_1^{(2)}, \cdots, a_n^{(2)}$
along the order of 
$a_{1, *}^{(1)}, \cdots, a_{n, *}^{(1)}$.
In the example below, we take 
$b_1 = 4$,  
$b_2 = 7$, 
and 
$n=6$. 
\beq
\begin{array}{ccccc}
A_2 & A_1 && (A_2 & A_1)^* \\ \hline
5 & 1 && 0 & 1  \\
0 & 3 && 3 & 3 \\
3 & 2 && 4 & 2 \\
4 & 0 && 5 & 0 \\
6 & 1 && 3 & 1  \\
3 & 2 && 6 & 2  \\   
\end{array} 
\eeq
Furthermore, let 
${\bf A}_j \in {\cal D}(b_j)^n$, 
$j=1, \cdots, N$, 
and suppose that we have defined 
$({\bf A}_k \cdots {\bf A}_1)^*
=
\{ a_{i, *}^{(j)} \}_{i=1, \cdots, n}^{j=1, \cdots, k}$.
Let 
\beq
\tilde{\bf A}_{i, *}^{(k)}
:=
(a_{i, *}^{(k)}, \cdots, a_{i, *}^{(1)})
\in 
{\cal D}(b_k) \times \cdots \times {\cal D}(b_1), 
\quad
i=1,\cdots, n.
\eeq
Then
$\{ a_{i, *}^{(k+1)} \}_{i=1}^n$
is defined by sorting  
$a_{1}^{(k+1)}, \cdots, a_{n}^{(k+1)}$
along the order of 
$\tilde{\bf A}^{(k)}_{1, *}, \cdots, \tilde{\bf A}^{(k)}_{n, *}$
in the lexicographic order in 
${\cal D}(b_k) \times \cdots \times {\cal D}(b_1)$. 
Repeating 
this processes for 
$k=1, \cdots, N$, 
we define the star map 
$({\bf A}_N \cdots {\bf A}_1)^*
=
\{ a_{i, *}^{(j)} \}_{i=1, \cdots, n}^{j=1, \cdots, N}$
for given 
$( {\bf A}_N, \cdots, {\bf A}_1)
=
\{ a_i^{(j)} \}_{i=1, \cdots, n}^{j=1, \cdots, N}
%\in 
%{\cal D}(b_N)^n \times \cdots \times {\cal D}(b_1)^n
$, 
which is a bijection on 
${\cal D}(b_N)^n \times \cdots \times {\cal D}(b_1)^n$. 
$*$-map 
gives the GSR representation of the composition of 
$(b, n, p)$-shuffles. 
\begin{lemma}\mbox{}\\
\label{star}
Let 
$b_1 \equiv 1 \pmod p$, 
and let 
${\bf A}_j :=\{ a_i^{(j)} \}_{i=1, \cdots, n}^{j=1, 2}$.
Then 
\beq
{\bf (A_2 A_1)}^{\sharp} &:=& ^t(a_1, \cdots, a_n)
\\
\mbox{where }\quad
a_i &:=& a_{i, *}^{(2)} \cdot b_1 
+
a_{i, *}^{(1)}
\in
{\cal D}(b_1 b_2), 
\quad
i = 1, 2, \cdots, n
\eeq
satisfies 
\beq
\pi_{b_1 \cdot b_2}[{\bf (A_2 A_1)}^{\sharp}]
=
\pi_{b_2}[{\bf A}_2] \circ \pi_{b_1}[{\bf A}_1].
\eeq
\end{lemma}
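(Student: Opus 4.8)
The plan is to prove Lemma \ref{star} by unwinding the definitions of the $*$-map, the $(b,n,p)$-shuffle, and the composition $\pi_{b_2}[{\bf A}_2] \circ \pi_{b_1}[{\bf A}_1]$, and checking that both sides assign the same image (both the permutation part and the color part) to each $i \in [n]$. The core combinatorial fact to exploit is that a base-$(b_1 b_2)$ digit $a_i = a_{i,*}^{(2)} \cdot b_1 + a_{i,*}^{(1)}$ sorts in exactly the lexicographic order of the pair $(a_{i,*}^{(2)}, a_{i,*}^{(1)})$, which is precisely the order the $*$-map was engineered to realize. So I expect the proof to be essentially a verification that composing two sorting operations equals a single sorting by the concatenated (mixed-radix) labels.

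First I would recall that $\pi_{b_1}[{\bf A}_1]$ is obtained by stably sorting $1,\dots,n$ according to the labels $a_1^{(1)},\dots,a_n^{(1)}$, and that $\pi_{b_2}[{\bf A}_2]$ does the same with ${\bf A}_2$; their composition as permutations must be matched against $\pi_{b_1 b_2}$ applied to the single label vector ${\bf (A_2 A_1)}^{\sharp}$. The key step for the permutation part is to observe that the star-map construction sorts the $a_i^{(2)}$ along the already-sorted order of the $a_{i,*}^{(1)}$, so that the composite sort-by-${\bf A}_2$-then-by-${\bf A}_1$ coincides with a single sort by the lexicographic pair. Since $a_i = a_{i,*}^{(2)} b_1 + a_{i,*}^{(1)}$ with $a_{i,*}^{(1)} \in {\cal D}(b_1)$, the usual ordering on ${\cal D}(b_1 b_2)$ agrees with the lexicographic order on $(a_{i,*}^{(2)}, a_{i,*}^{(1)})$, which gives the permutation statement $\sigma(i)$ for $\pi_{b_1 b_2}$ equal to the permutation obtained from $\pi_{b_2} \circ \pi_{b_1}$. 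I would phrase this as a direct comparison of the two sorting recipes, invoking the well-known fact (already cited in the paper in step (ii) of the shuffle definition and in the GSR description of the $b$-shuffle) that stable base-$b$ sorting composes under the $*$-map.

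For the color part, I would verify that the color assigned by $\pi_{b_1 b_2}[{\bf (A_2 A_1)}^{\sharp}]$ to index $i$, namely $a_i \bmod p$, agrees with the color produced by the composition $\pi_{b_2}[{\bf A}_2] \circ \pi_{b_1}[{\bf A}_1]$ in $G_{p,n}$. This is exactly where the hypothesis $b_1 \equiv 1 \pmod p$ enters: modulo $p$ we have $a_i = a_{i,*}^{(2)} b_1 + a_{i,*}^{(1)} \equiv a_{i,*}^{(2)} + a_{i,*}^{(1)} \pmod p$, so the color of the composite is the sum of the two colors, which is precisely how colors add under composition in the wreath product ${\bf Z}_p \wr S_n$. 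Without $b_1 \equiv 1 \pmod p$ this additive matching would fail, so I expect this congruence check to be the main (and really the only) delicate point of the proof; the rest is bookkeeping on the sorting. I would close by noting that matching both the permutation and color coordinates for every $i$ establishes $\pi_{b_1 b_2}[{\bf (A_2 A_1)}^{\sharp}] = \pi_{b_2}[{\bf A}_2] \circ \pi_{b_1}[{\bf A}_1]$ as elements of $G_{p,n}$.
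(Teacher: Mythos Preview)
Your proposal is correct and follows essentially the same approach as the paper: split the verification into the permutation part (the ordering on ${\cal D}(b_1 b_2)$ of the $a_i$ agrees with the lexicographic order on the pairs $(a_{i,*}^{(2)}, a_{i,*}^{(1)})$, so $\mu(i) = (\sigma_2 \circ \sigma_1)(i)$ by definition of the $*$-map) and the color part (use $b_1 \equiv 1 \pmod p$ to get $a_i \equiv a_{i,*}^{(2)} + a_{i,*}^{(1)} \pmod p$, matching the additive rule for colors under composition in $G_{p,n}$). The paper's proof is terser but makes exactly these two observations.
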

\begin{proof}\mbox{}\\
Let 
$\sigma_j := \pi_{b_j}[{\bf A}_j]$  
($j=1, 2$), 
$\sigma := \sigma_2 \circ \sigma_1$, 
and 
$\mu := 
\pi_{b_2 \cdot b_1}
[ ({\bf A}_2 {\bf A}_1)^{\sharp} ]
%\pi_{b_2} [{\bf A}_2] \circ \pi_{b_1}[{\bf A}_1]
\in G_{p, n}$. 
We note that 
the order of elements in 
$({\bf A}_2 {\bf A}_1)^{\sharp} \in {\cal D}(b_2 b_1)^n$
is equivalent to that of 
$\tilde{\bf A}^{(2)}_{1, *}, \cdots, \tilde{\bf A}^{(2)}_{n, *}
\in {\cal D}(b_2) \times {\cal D}(b_1)$
in the lexicographic order.
Then by the definition of  
$*$-map, 
we have 
\beq
\mu(i) &=& (\sigma_2 \circ \sigma_1)(i) = \sigma(i).
\eeq
On the other hand, since 
\beq
a_i &=& a_{i, *}^{(2)} \cdot b_1 
+
a_{i, *}^{(1)}
\equiv 
a_{i,*}^{(2)} + a_{i,*}^{(1)} \pmod p, 
\eeq
and since, by the definition of 
$*$-map, 
\[
\sigma^c(i) \equiv a_{i,*}^{(2)} + a_{i,*}^{(1)} \pmod p, 
\]
we have 
$\mu^c(i) \equiv a_i \equiv \sigma^c(i) \pmod p$. 
\QED
\end{proof}
\begin{lemma}\mbox{}\\
\label{composition}
(1)
The composition of one step of 
$(b_1, n, p)$-carries process 
and one step of 
$(b_2, n, p)$-carries process
have the same distribution as that of 
$(b_2 \cdot b_1, n, p)$-carries process. 
\\
(2)
Let 
$b_1 \equiv 1 \pmod p$.
Then the composition of one step of 
$(b_1, n, p)$-shuffle 
and one step of 
$(b_2, n, p)$-shuffle
have the same distribution as that of 
$(b_2 \cdot b_1, n, p)$-shuffle. 
\end{lemma}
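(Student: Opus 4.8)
The plan is to treat both parts with one device: the decomposition of a base-$(b_1 b_2)$ digit into a base-$b_2$ digit and a base-$b_1$ digit, $z = y^{(2)} b_1 + y^{(1)}$ with $y^{(1)} \in {\cal D}(b_1)$, $y^{(2)} \in {\cal D}(b_2)$. This is a bijection ${\cal D}(b_1 b_2) \to {\cal D}(b_2) \times {\cal D}(b_1)$, so it carries the uniform law on ${\cal D}(b_1 b_2)$ to the product of uniform laws. Part (2) will be a probabilistic reading of Lemma \ref{star}, and part (1) its arithmetic shadow. I use throughout the standing assumption under which these processes are defined, $(b_i-1)/p \in {\bf N}$, so that $A_+(b_i) \in {\bf N}$ and also $b_1 b_2 \equiv 1 \pmod p$.

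For part (1) I would start from one step of the $(b_1 b_2, n, p)$-carries process, written by (\ref{bnp}) as $\kappa_0 + \sum_j Z_j + A_+(b_1 b_2) = \kappa_2 (b_1 b_2) + s$ with $Z_j, s \in {\cal D}(b_1 b_2)$, and substitute $Z_j = Y_j^{(2)} b_1 + Y_j^{(1)}$, $s = s^{(2)} b_1 + s^{(1)}$. The identity to verify first is $A_+(b_1 b_2) = b_1 A_+(b_2) + A_+(b_1)$, which follows from $b_1 b_2 - 1 = (b_2-1)b_1 + (b_1-1)$ and $A_+(b) = (b-1)/p^*$. Granting it, the $b_1$-free part of the left side assembles into a genuine $(b_1, n, p)$-step $\kappa_0 + \sum_j Y_j^{(1)} + A_+(b_1) = \kappa_1 b_1 + s^{(1)}$ defining the intermediate carry $\kappa_1$ (reducing the full equation modulo $b_1$ confirms that this $s^{(1)}$ is the base-$b_1$ digit of $s$), and dividing what remains by $b_1$ leaves $\kappa_1 + \sum_j Y_j^{(2)} + A_+(b_2) = \kappa_2 b_2 + s^{(2)}$, a $(b_2, n, p)$-step. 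Since the $Z_j$ are i.i.d. uniform and the digit map is a measure-preserving bijection, the pairs $(Y_j^{(2)}, Y_j^{(1)})$ are i.i.d. uniform on ${\cal D}(b_2) \times {\cal D}(b_1)$, so the conditional law of $\kappa_2$ given $\kappa_0$ under the composed chain equals that of the $(b_1 b_2, n, p)$-process. A one-line alternative uses Theorem \ref{left}: writing $P_p^+(b)$ for the base-$b$ transition matrix, the $b$-independence of $L_p$ gives $P_p^+(b_1) P_p^+(b_2) = L_p^{-1} D_{b_1}^+ D_{b_2}^+ L_p = L_p^{-1} D_{b_1 b_2}^+ L_p = P_p^+(b_1 b_2)$.

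For part (2) I would invoke Lemma \ref{star} directly. Taking ${\bf A}_1$ uniform on ${\cal D}(b_1)^n$ and ${\bf A}_2$ uniform on ${\cal D}(b_2)^n$ independently, Lemma \ref{star} yields $\pi_{b_1 b_2}[({\bf A}_2 {\bf A}_1)^{\sharp}] = \pi_{b_2}[{\bf A}_2] \circ \pi_{b_1}[{\bf A}_1]$, the left-hand side being the GSR representation of one $(b_1 b_2, n, p)$-shuffle and the right-hand side the composition of the two shuffles. The only point left is that $({\bf A}_2, {\bf A}_1) \mapsto ({\bf A}_2 {\bf A}_1)^{\sharp}$ is a bijection ${\cal D}(b_2)^n \times {\cal D}(b_1)^n \to {\cal D}(b_1 b_2)^n$, being the star map (a bijection, as noted where it was defined) followed by the entrywise recombination $a_i = a_{i,*}^{(2)} b_1 + a_{i,*}^{(1)}$; it therefore sends the product uniform measure to the uniform measure on ${\cal D}(b_1 b_2)^n$. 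Hence $\pi_{b_2}[{\bf A}_2] \circ \pi_{b_1}[{\bf A}_1]$ has the law of $\pi_{b_1 b_2}[{\bf A}]$ with ${\bf A}$ uniform, which is the $(b_1 b_2, n, p)$-shuffle.

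I expect the one genuinely nontrivial ingredient along the combinatorial route to be the compatibility identity $A_+(b_1 b_2) = b_1 A_+(b_2) + A_+(b_1)$: it is exactly this matching of the shift constants that makes the base-$(b_1 b_2)$ equation split cleanly into a $b_1$-step followed by a $b_2$-step. The spectral route sidesteps it but yields less of the digit-level dictionary that the rest of the section relies on. Once the identity is in hand, both parts reduce to bijectivity and measure-preservation of the digit decomposition, with part (2) inherited almost verbatim from Lemma \ref{star}.
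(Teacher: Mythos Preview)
Your proof is correct and follows essentially the same approach as the paper. For (1) the paper likewise cites either Theorem \ref{left} or the identity $A_+(b_2)b_1 + A_+(b_1) = A_+(b_2 b_1)$ together with the definition of the process, and for (2) it invokes Lemma \ref{star} and the bijectivity of $({\bf A}_2,{\bf A}_1)\mapsto({\bf A}_2{\bf A}_1)^{\sharp}$ exactly as you do; your write-up simply fills in the digit-splitting and measure-preservation details that the paper leaves implicit.
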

\begin{proof}\mbox{}\\
(1)
This follows either from Theorem \ref{left}, or from the definition of 
$(b, n, p)$-process
and the fact that 
$A_+(b_2) b_1 + A_+(b_1) = A_+(b_2 b_1)$. \\
(2)
Let 
${\bf A}_j \in {\cal D}(b_j)^n$, $j=1,2$. 
It suffices to construct 
${\bf A} \in {\cal D}(b_1 \cdot b_2)^n$
from 
$({\bf A}_2, {\bf A}_1) 
\in {\cal D}(b_2)^n \times {\cal D}(b_1)^n$ 
bijectively such that 
$\pi_{b_1 \cdot b_2} [ {\bf A} ] 
=
\pi_{b_2} [ {\bf A}_2 ] \circ \pi_{b_1} [ {\bf A}_1 ]$. 
By Lemma \ref{star}, 
we have only to take 
${\bf A} := ({\bf A}_2 {\bf A}_1)^{\sharp}$. 
\QED
\end{proof}
Next, 
we need to introduce notions of orders and descents on 
${\cal D}(b)^n$.
Let 
${\bf x} := (x_1, \cdots, x_n) \in {\cal D}(b)^n$. \\
\noindent
(1)
We say that 
${\bf x}$
has a {\bf bar-descent} at 
$i$
if and only if
\\
(i)
$x_i > x_{i+1}$ ($i=1, 2, \cdots, n-1$), 
and 
(ii)
$x_n > A_+(b)'=\frac {b-1}{p}$ ($i=n$). 
\\
We denote by 
$\bar{d}_b({\bf x})$
the number of bar-descents of 
${\bf x}$. 
It 
will be important to make 
$b$-dependence explicit. 
\\
\noindent
(2)
We define an order
$\prec$
on 
${\cal D}(b)$ 
as follows : 
writing 
$x = j_x p + r_x \in {\cal D}(b)$
with 
$r_x =0, \cdots, p-1$, 
we say 
$x \prec y$
if and only if 
$(j_x, r_x) < (j_y, r_y)$. 
where an order on 
$(j, r)'s$
are given 
by the following inequalities
($c=A_+(b)'$). 
\beq
&&
(0,0) < (1,0) < \cdots < (c-1,0)<(c,0) \\
<&&
(0, p-1) < (1, p-1) < \cdots < (c-1, p-1)
\\
<&&
(0, p-2) < (1, p-2) < \cdots < (c-1, p-2)
\\
<&&
 \cdots < 
\\
<&&
(0, 1) < (1, 1) < \cdots < (c-1, 1).
\eeq
We say that 
${\bf x} := (x_1, \cdots, x_n) \in {\cal D}(b)^n$
has a {\bf tilde-descent} at 
$i$ 
if and only if 
(i)
$x_i \succ x_{i+1}$ 
($i=1, 2, \cdots, n-1$), 
and 
(ii)
$x_n \not\equiv 0 \pmod p$. \\
We denote by 
$\tilde{d}_b({\bf x})$
the number of tilde-descents of 
${\bf x}$. \\

Let 
$f_b : {\cal D}(b) \to {\cal D}(b)$
be a bijection given by 
\[
f_b (x) := px 
\quad
\pmod b.
\]
Since 
$b \equiv 1 \pmod p$, 
$x < y$
if and only if 
$f_b(x) \prec f_b(y)$, 
and 
$x > A_+(b)'$
if and only if 
$f_b (x) \not\equiv 0 \pmod p$.
So letting 
\[
f_b({\bf x})
:= (f_b(x_1), \cdots, f_b(x_n))
\in {\cal D}(b)^n
\]
we have
\begin{equation}
\overline{d}_b({\bf x})
=
\tilde{d}_b(f_b({\bf x})).
\label{bartilde}
%\quad\cdots (2)
\end{equation}
By definition of 
$(b, n, p)$-shuffle, 
the number of descents of 
$\pi_b [ f_b({\bf x}) ]$
is equal to tilde-descents of 
$f_b({\bf x})$ : 
\begin{equation} 
\tilde{d}_b(f_b({\bf x}))
=
d(\pi_b[f_b({\bf x})]).
\label{tildeshuffle}
%\quad\cdots (3)
\end{equation}
%

%
%%%%%%%%%%%%%%%%%%%%%%%%%%%%%
\subsection{Coincidence of one step probabilities}
As a first step 
for the proof of Theorem \ref{generalshuffle}, 
we prove the one step probabilities for 
$\kappa_0^+=0$, 
$\sigma_0 = id$
are equal. 
\begin{proposition}
\label{onestep}
For any 
$r \ge 1$, 
and any 
$j \in {\cal C}_p(n):=\{0, \cdots, n\}$, 
we have 
\beq
{\bf P}\left(
\kappa_r = j 
\, | \, 
\kappa_0 = 0
\right)
=
{\bf P}\left(
d(\sigma_r) = j 
\, | \, 
\sigma_0 = id
\right).
\eeq
\end{proposition}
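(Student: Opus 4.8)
The plan is to show that for every $r \ge 1$ the carries process started at $\kappa_0^+ = 0$ and the descent process of the iterated shuffle started at $\sigma_0 = \mathrm{id}$ assign the same probability to each value $j \in \{0, 1, \dots, n\}$. Both processes are driven by i.i.d.\ uniform choices from $\mathcal{D}(b)^n$ (for the shuffle) or the equivalent summands (for the carries process), so the natural strategy is to partition the common sample space $\mathcal{D}(b)^{rn}$ (via the $r$-fold composition) according to the outcome and exhibit a \emph{bijection} on labels that carries ``carry equals $j$'' to ``descent equals $j$''. Let me walk through how I would set it up.

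First I would use Lemma~\ref{composition}(1) to collapse the $r$-fold composition of the carries process into a single step of the $(b^r, n, p)$-carries process, and Lemma~\ref{composition}(2) together with Lemma~\ref{star} to collapse the $r$-fold shuffle into a single $(b^r, n, p)$-shuffle. This reduces the claim for general $r$ to the claim for one step with base $B := b^r$ (note $B \equiv 1 \pmod p$ since $b \equiv 1$). So it suffices to prove
$$
{\bf P}\left( \kappa_1 = j \mid \kappa_0 = 0 \right)
=
{\bf P}\left( d(\pi_B[{\bf A}]) = j \right),
$$
where ${\bf A}$ is chosen uniformly from $\mathcal{D}(B)^n$. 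The left side counts, by the defining equation~(\ref{bnp}) with $\kappa_0 = 0$, the summands ${\bf x} = (x_1, \dots, x_n) \in \mathcal{D}(B)^n$ (after the change of variables) whose associated carry equals $j$; I would identify this carry with the number of bar-descents $\overline{d}_B({\bf x})$, since the carry from adding $n$ digits plus the fixed offset $A_+(B)$ is precisely the number of ``overflow'' positions, which is the bar-descent count with threshold $A_+(B)' = \frac{B-1}{p}$.

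The remaining and genuinely combinatorial step is to convert bar-descents into shuffle-descents. Here I would invoke the bijection $f_B$ on $\mathcal{D}(B)$ from the Preliminaries: by~(\ref{bartilde}) we have $\overline{d}_B({\bf x}) = \tilde{d}_B(f_B({\bf x}))$, and by~(\ref{tildeshuffle}) we have $\tilde{d}_B(f_B({\bf x})) = d(\pi_B[f_B({\bf x})])$. Since $f_B$ is a bijection on $\mathcal{D}(B)$, applying it coordinatewise is a bijection on $\mathcal{D}(B)^n$ that preserves the uniform measure. Chaining these identities gives $\overline{d}_B({\bf x}) = d(\pi_B[f_B({\bf x})])$, so the uniform distribution of $\overline{d}_B$ equals the uniform distribution of $d(\pi_B[\cdot])$, which is exactly the equality of probabilities we want.

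The main obstacle I anticipate is the first identification: verifying carefully that the carry value in~(\ref{bnp}) with $\kappa_0 = 0$ really equals the bar-descent count $\overline{d}_B({\bf x})$, including the correct treatment of the boundary condition at position $n$ (the ``$x_n > A_+(B)'$'' clause) and the role of the offset $A_+(B) = \frac{B-1}{p^*}$. This requires unwinding the change of variables $X_j = Y_j + d$, $r = s + d$ and checking that the threshold matches $A_+(B)' = \frac{B-1}{p}$; the arithmetic linking the integer carry to a count of descents is the delicate part, whereas the passage through $f_B$ via~(\ref{bartilde}) and~(\ref{tildeshuffle}) is then immediate from the Preliminaries.
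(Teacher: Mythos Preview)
Your overall architecture is exactly right --- reduce to a single step via Lemma~\ref{composition}, then build a bijection on $\mathcal{D}(B)^n$ that carries ``carry $=j$'' to ``descent $=j$'' by passing through $f_B$ and equations~(\ref{bartilde}),~(\ref{tildeshuffle}). But there is a genuine missing step, and it is precisely the one you flag as the ``main obstacle'': the carry $\kappa_1$ is \emph{not} equal to $\overline{d}_B({\bf x})$ when ${\bf x}=(x_1,\dots,x_n)$ denotes the summands themselves. A quick sanity check shows this: with $p=1$, $B$ large, and ${\bf x}=(2,1,3,0,\dots,0)$, the total sum is small so the carry is $0$, yet ${\bf x}$ has a bar-descent at position $1$. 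The number of ``overflow positions'' is a property of the \emph{running partial sums} $\overline{X}_i := X_1+\cdots+X_i \pmod B$, not of the raw digits.

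What the paper does --- and what your argument needs --- is an additional bijection, the \emph{bar map} ${\bf X}\mapsto\overline{\bf X}$ on $\mathcal{D}(B)^n$ sending the summand vector to its vector of partial sums mod $B$. One then checks that a carry occurs when adding $X_{i+1}$ to $\overline{X}_i$ exactly when $\overline{X}_i>\overline{X}_{i+1}$ (for $i<n$), and when adding the fixed offset $A_+(B)$ to $\overline{X}_n$ exactly when $\overline{X}_n>A_+(B)'$; this gives $\kappa({\bf X})=\overline{d}_B(\overline{\bf X})$, which is equation~(\ref{bar}) in the paper. Only \emph{after} this step do (\ref{bartilde}) and (\ref{tildeshuffle}) apply, yielding $\kappa({\bf X})=d(\pi_B[f_B(\overline{\bf X})])$. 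The required measure-preserving bijection is therefore ${\bf X}\mapsto f_B(\overline{\bf X})$, the composition of the bar map with the coordinatewise $f_B$, rather than $f_B$ alone. Once you insert this partial-sum step, your proof goes through and matches the paper's.
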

\begin{proof}\mbox{}\\
By
Lemma \ref{composition} 
we may take 
$r=1$ :  
our aim is to show 
$
{\bf P}\left(
\kappa_1 = j 
\, | \, 
\kappa_0 = 0
\right)
=
{\bf P}\left(
d(\sigma_1) = j 
\, | \, 
\sigma_0 = id
\right).
$
Since 
$\kappa_0=0$, 
(\ref{bnp}) 
implies that 
$\kappa_1$
is determined by 
${\bf X}:=(X_1, \cdots, X_n) \in {\cal D}(b)^n$
so that we set  
$\kappa_1 = \kappa({\bf X})$. 
Similarly,
$\sigma_1$
is determined by its GSR representation 
${\bf A}:=(a_1, \cdots, a_n) \in {\cal D}(b)^n$. 
We 
shall construct a bijection  
${\bf X} \mapsto {\bf A}$ 
satisfying 
$\kappa({\bf X}) = d (\pi_b[{\bf A}])$.
The bar map sending 
${\bf X}$
to 
$\overline{\bf X}:=
(\overline{X}_1, \cdots, \overline{X}_n)$
is a bijection on 
${\cal D}(b)^n$
defined by 
\beq
\overline{\bf X}
&:=&
(\overline{X}_1, \cdots, \overline{X}_n)
\in {\cal D}(b)^n
\\
\mbox{where }\;
\overline{X}_i
&:=&
X_1 + \cdots + X_i \in {\cal D}(b)
\pmod b, 
\quad
i=1, 2, \cdots, n.
\eeq
In the definition of 
$(b, n, p)$-carries process, 
in adding 
$(n+1)$-numbers
$X_1, \cdots, X_n, X_{n+1}(=A_+(b))$, 
we have a carry in adding 
$X_{i+1}$ 
to 
$\overline{X}_i$ 
if and only if 
(i)
$\overline{X}_i > \overline{X}_{i+1}
(i=1, \cdots, n-1)$, 
and 
(ii)
$ \overline{X}_n > A_+(b)'
(i=n)$.
Hence
\begin{equation}
d(\kappa({\bf X}))  
=
\overline{d}_b(\overline{\bf X}).
%\quad \cdots (1)
\label{bar}
\end{equation}
Then by 
(\ref{bartilde})-(\ref{bar})
we have 
\begin{equation}
\kappa({\bf X})
=
\overline{d}_b(\overline{\bf X})
=
\tilde{d}_b(f_b(\overline{\bf X}))
=
d(\pi_b[f_b(\overline{\bf X})
])
\label{alltogether}
\end{equation}
and the map  
${\bf X} \mapsto f_b(\overline{\bf X})$
is a bijection on 
${\cal D}(b)^n$.
Therefore 
we complete the proof of Proposition \ref{onestep}. 
\QED
\end{proof}
In Appendix, 
we give 
a different proof of 
Proposition \ref{onestep}
by the generating function method used in \cite{DF2}. 
%
%%%%%%%%%%%%%%%%%%%%%%%%%%%%%
\subsection{Proof of Theorem \ref{generalshuffle}}
We aim to show the following statement :  for any 
$N \in {\bf N}$ 
and for any 
$j_1, \cdots, j_N \in {\cal C}_p(n)$
\begin{eqnarray}
&&
{\bf P}\left(
%\kappa_N^+ = j
\kappa_1 = j_1, \kappa_2 = j_2, \cdots, \kappa_N = j_N
\, | \, \kappa_0 = 0
\right)
\nonumber
\\
&&\qquad
=
{\bf P}\left(
%d (\sigma_N) = j
d(\sigma_1) = j_1, d(\sigma_2) = j_2, \cdots, d(\sigma_N) = j_N
\, | \, \sigma_0 = id
\right).
\label{aim}
\end{eqnarray}
from which Theorem \ref{generalshuffle} follows.
We prove 
this equality by constructing a bijection as is done in the previous subsection, but the equation
(\ref{alltogether}) 
is realized simultaneously in all  steps. 
In adding 
$n$
numbers of 
$N$ digits with 
$\kappa_0 = 0$, 
\beq
\begin{array}{ccccc}
\kappa_N & \kappa_{N-1} & \cdots & 
\kappa_1 & 0 
\\\hline
& X_1^{(N)} & \cdots & X_1^{(2)} & X_1^{(1)}
\\
& \vdots & & \vdots & \vdots 
\\
& X_n^{(N)} & \cdots & X_n^{(2)} & X_n^{(1)}
\\
& A_+(b) & \cdots & A_+(b) & A_+(b)
\\ \hline
& s^{(N)} & \cdots & s^{(2)} & s^{(1)}
\end{array}
\eeq
$\kappa_1, \cdots, \kappa_N$
are determined by 
$\{ X_i^{(j)} \}_{i=1, \cdots, n}^{j=1, \cdots, N}
\in 
{\cal D}(b)^{Nn}$.
Likewise, since 
$\sigma_0 = id$, 
$\sigma_1, \cdots, \sigma_N$
are determined by their GSR representations  
${\bf A}_1, \cdots, {\bf A}_N$.
We shall construct a bijection 
$\{ X_i^{(j)} \}_{i=1, \cdots, n}^{j=1, \cdots, N}
\mapsto
({\bf A}_N, \cdots, {\bf A}_1)$
on 
${\cal D}(b)^{Nn}$ 
such that 
$\sigma_j = \pi_b [ {\bf A}_j ]
\circ
\cdots
\circ
\pi_b [ {\bf A}_1 ]$
satisfies 
$\kappa_j = d(\sigma_j)$ 
for each 
$j=1, 2, \cdots, N$.

For given 
$a^{(1)}, \cdots, a^{(N)} \in {\cal D}(b)$, 
let 
\begin{equation}
(a^{(N)}, \cdots, a^{(1)})_b
:=
\sum_{j=1}^N
b^{j-1} a^{(j)}
\label{basebexpansion}
\end{equation}
be the number whose base 
$b$
expansion is equal to 
$(a^{(N)}, \cdots, a^{(1)})$.
Given 
$\{ X_i^{(j)} \}_{i=1, \cdots, n}^{j=1, \cdots, N}
\in {\cal D}(b)^{Nn}$, 
we define 
$\{ \overline{X}_i^{(j)} \}_{i=1, 2, \cdots, n}^{j=1, 2, \cdots, N}$
by 
\begin{equation}
(
\overline{X}^{(N)}_i, \cdots, 
\overline{X}^{(1)}_i 
)_b
:=
\sum_{k=1}^i
( X^{(N)}_k, \cdots, X^{(1)}_k )_b, 
\quad
\mbox{mod} \; b^N.
\label{barmap}
\end{equation}
For each 
$j=1, 2, \cdots, N$, 
let 
\begin{equation}
\overline{\bf X}^{(j)}
:=
\left(
\begin{array}{c}
(
\overline{X}^{(j)}_1, 
\cdots, 
\overline{X}^{(1)}_1
)_b
\\
\vdots \\
(
\overline{X}^{(j)}_n, 
\cdots, 
\overline{X}^{(1)}_n
)_b
\\
\end{array}
\right)
\in 
{\cal D}(b^j)^n
%\{0, 1, \cdots, b^j -1\}^n
\label{barvector}
\end{equation}
be the truncation of 
$\{ 
(\overline{X}_i^{(N)}, \cdots, \overline{X}_i^{(1)})_b
\}_{i=1}^n$
%$\{ \overline{X}_i^{(j)} \}$
up to 
$j$-th digits. 
Then by Lemma \ref{composition}(1), 
we have 
\begin{equation}
\kappa_j = \overline{d}_{b^j}(\overline{\bf X}^{(j)}), 
\quad
j=1, \cdots, N.
\label{oneone}
%\quad\cdots (1)
\end{equation}
Next we define 
$\{ Y_i^{(j)} \}_{i=1, 2, \cdots, n}^{j=1, 2, \cdots, N}$
by 
\begin{equation}
(Y^{(N)}_i, \cdots, Y^{(1)}_i)_b
:=
f_{b^N}
\left(
(\overline{X}^{(N)}_i, \cdots, \overline{X}^{(1)}_i)_b
\right), 
\quad
i=1, \cdots, n.
\label{fb}
\end{equation}
And let
\beq
{\bf Y}^{(j)}
&:=&
\left(
\begin{array}{c}
(
Y^{(j)}_1, 
\cdots, 
Y^{(1)}_1
)_b
\\
\vdots \\
(
Y^{(j)}_n, 
\cdots, 
Y^{(1)}_n
)_b
\\
\end{array}
\right)
\in 
{\cal D}(b^j)^n
%\{0, 1, \cdots, b^j -1\}^n
\eeq
be the truncation of 
$\{ (Y^{(N)}_i, \cdots, Y^{(1)}_i)_b
 \}_{i=1}^n$
up to 
$j$-th digits.
By definition of 
$f_{b^N}$ 
we have
\beq
f_{b^j}(\overline{\bf X}^{(j)})
:=
\left(
\begin{array}{c}
f_{b^j}
\left(
(\overline{X}^{(j)}_1, \cdots, \overline{X}^{(1)}_1)_b
\right)
\\
\vdots
\\
f_{b^j}
\left(
(\overline{X}^{(j)}_n, \cdots, \overline{X}^{(1)}_n)_b
\right)
\end{array}
\right)
=
{\bf Y}^{(j)}, 
\quad
j=1, \cdots, N.
\eeq
Therefore by 
(\ref{oneone}), 
(\ref{bartilde}), 
(\ref{tildeshuffle}), 
we have 
\beq
\kappa_j
=
\overline{d}_{b^j}(\overline{\bf X}^{(j)})
=
\tilde{d}_{b^j}({\bf Y}^{(j)})
=
d(\pi_{b^j}[{\bf Y}^{(j)}]), 
\quad
j=1, 2, \cdots, N.
\eeq
Taking 
${\bf A}_N, \cdots, {\bf A}_1 \in {\cal D}(b)^n$
%${\bf A}_j = \{ A_i^{(j)} \}_{i=1}^n$, 
such that 
\beq
\{ Y_i^{(j)} \}_{i=1, \cdots, n}^{j=1, \cdots, N}
=
({\bf A}_N \cdots {\bf A}_1)^*, 
\eeq
we have 
\beq
\pi_{b^j}[{\bf Y}^{(j)}]
=
\pi_b [{\bf A}_j]
\circ
\pi_b [{\bf A}_{j-1}]
\circ
\cdots
\circ
\pi_b [{\bf A}_1], 
\quad
j=1, \cdots, N.
\eeq
by Lemma \ref{star}. 
Since the map 
$\{ X_i^{(j)} \}_{i=1, \cdots, n}^{j=1, \cdots, N}
\mapsto 
({\bf A}_N, \cdots, {\bf A}_1)$
%\{ A_i^{(j)} \}_{i=1, \cdots, n}^{j=1, \cdots, N}$
is a bijection on 
${\cal D}(b)^{Nn}$, 
we complete the proof of 
Theorem \ref{generalshuffle}. 
\QED\\
\noindent
{\bf Example}\\
Let 
$p=3$,  
%$c=2$, 
$b = 3 \cdot 2 + 1 = 7$, 
$n = 4$, 
and 
$N=3$.
In this case 
$A_+(b) = 4$. 
Suppose that the first three steps of a 
$(7, 4, 3)$-carries process is given by 
%$\overline{\bf X}= \{ \overline{X}_i^{(j)} \}_{i=1,\cdots, n}^{j=1,\cdots, N}$
%
\beq
\begin{array}{cccc}
2 & 3 & 3 & \\\hline
&3 & 5 & 4 \\
&0 & 2 & 5 \\
&4 & 4 & 6 \\
&0 & 3 & 2 \\
& 4 & 4 & 4 \\\hline
& 0 & 0 & 0
\end{array}
\eeq
Then 
$\kappa_1 = 3$, 
$\kappa_2 = 3$, 
$\kappa_3 = 2$.
Applying 
bar map, 
$f_{b^3}$, 
$\star^{-1}$, 
and 
$\pi_b$
we have 
\beq
&&
\stackrel{bar \; map}{\Longrightarrow}
\quad
\begin{array}{ccc}\hline
3 & \textcolor{red}{5} & \textcolor{red}{4} \\
\textcolor{red}{4} & 1 & \textcolor{red}{2} \\
1 & \textcolor{red}{6} & 1 \\
\textcolor{red}{2} & \textcolor{red}{2} & \textcolor{red}{3} \\\hline
\end{array}
\quad
\stackrel{f_{b^3}}{\Longrightarrow}
\quad
\begin{array}{ccc}\hline
4 & \textcolor{red}{2} & \textcolor{red}{5} \\
\textcolor{red}{5} & 3 & \textcolor{red}{6} \\
5 & \textcolor{red}{4} & 3 \\
\textcolor{red}{0} & \textcolor{red}{0} & \textcolor{red}{2} \\
\hline
\end{array}
\quad
\stackrel{\star^{-1}}{\Longrightarrow}
\quad
\begin{array}{ccc}
{\bf A}_3 & {\bf A}_2 & {\bf A}_1 \\\hline
0 & 0 & 5 \\
4 & 4 & 6 \\
5 & 2 & 3 \\
5 & 3 & 2 \\\hline
\end{array}
\\
&&
\quad
\stackrel{\pi}{\Longrightarrow}
\quad
\begin{array}{ccc}
\pi_7({\bf A}_3) & \pi_7({\bf A}_2) & \pi_7({\bf A}_1) \\\hline
(1,0) & (1,0) & (3,2) \\
(2,1) & (4,1) & (4,0) \\
(3,2) & (2,2) & (2,0) \\
(4,2) & (3,0) & (1,2)\\\hline
\end{array}
\eeq
Then 
$\sigma_1 := \pi_7 [{\bf A}_1]$
$\sigma_2 := \pi_7[{\bf A}_2] \circ \pi_7[{\bf A}_1]$, 
$\sigma_3 := \pi_7[{\bf A}_3] \circ \pi_7[{\bf A}_2] \circ \pi_7[{\bf A}_1]$
satisfy 
$d(\sigma_1) =  3$, 
$d(\sigma_2) = 3$, 
$d(\sigma_3) = 2$. 
\beq
\begin{array}{ccc}
\sigma_3 & \sigma_2 & \sigma_1 \\\hline
(2,2) & \textcolor{red}{(2,1)} & \textcolor{red}{(3,2)} \\
\textcolor{red}{(3,2)} & (3,0) & \textcolor{red}{(4,0)} \\
(4,0) & \textcolor{red}{(4,1)} & (2,0) \\
\textcolor{red}{(1,2)} & \textcolor{red}{(1,2)} & \textcolor{red}{(1,2)} 
\\\hline
\end{array}
\eeq
%

%%%%%%%%%%%%%%%%%%%%%%%%%%%%%%%%%%
\section{Relation to Riffle Shuffles : $(-b)$-case}
\subsection{Simpler case}
In this subsection 
we prove Theorem \ref{p1}.
We begin by proving 
some symmetry properties of 
$P_p^{\pm}$ : 
\begin{proposition}
\label{plusminus1}
%\mbox{}\\
%
%
\beq
&(0)&\;
P_1^+(i,j)
=
P_1^+ (n-1-i, n-1-j), 
\quad
i, j \in {\cal C}_1(n),
\\
&(1)&\;
P_1^-(i,j)
=
P_1^+(i, n-1-j),
\quad
i, j \in  {\cal C}_1(n),
\\
&(2)&\;
P_2^-(i,j)
=
P_2^+(i, n-j), 
\quad
i, j \in {\cal C}_2(n),
\\
&(3)&\;
P^{\pm}_p(i,j)
=
P^{\pm}_{p^*}(n-i, n-j), 
\quad
p>1, \;
i, j \in {\cal C}_p(n).
\eeq
\end{proposition}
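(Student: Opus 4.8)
The plan is to prove the four symmetry relations in Proposition \ref{plusminus1} directly from the explicit formula for $P_p^{\pm}(i,j)$ given in Proposition \ref{prob}. Since that formula expresses the transition probability entirely through the quantity $B_p^{\pm}(i,j)$ via
\[
P_p^{\pm}(i,j)
=
\frac{1}{b^n}
\sum_{r\ge 0}
(-1)^r
\left(\begin{array}{c} n+1\\ r\end{array}\right)
\left(\begin{array}{c} n+B_p^{\pm}(i,j)-br\\ n\end{array}\right)
1(B_p^{\pm}(i,j)-br\ge 0),
\]
each identity will reduce to checking that the two sides have \emph{equal} values of $B$ (possibly after the substitution that swaps the summation structure of $(+b)$ and $(-b)$), because equal $B$ forces equal probabilities. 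So the real content is a sequence of elementary affine-arithmetic verifications on the functions $B_p^+$ and $B_p^-$.

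First I would record the two defining expressions, $B_p^+(i,j)=(j+\frac1p)b-(i+\frac1p)$ and $B_p^-(i,j)=(-j+1-\frac1p)b-(i+\frac1p)+nb$, and then handle the four parts in turn. For (0), with $p=1$ we have $B_1^+(i,j)=(j+1)b-(i+1)$; substituting $i\mapsto n-1-i$, $j\mapsto n-1-j$ gives $B_1^+(n-1-i,n-1-j)=(n-j)b-(n-i)$, and I would show this produces the same summand as $B_1^+(i,j)$ after re-indexing $r\mapsto n+1-r$ in the alternating sum, using $\binom{n+1}{r}=\binom{n+1}{n+1-r}$ together with the Vandermonde/reflection identity $\binom{n+B-br}{n}=\binom{\cdots}{n}$ relating the two tails. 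For (1) and (2), I would compute $B_1^+(i,n-1-j)$ and $B_2^+(i,n-j)$ explicitly and check they coincide with $B_1^-(i,j)$ and $B_2^-(i,j)$ respectively; these are one-line substitutions once the $\frac1p$ terms are tracked ($p=1$ gives $\frac1p=1$, $p=2$ gives $\frac1p=\frac12$). For the general duality (3), I would use $\frac1p+\frac1{p^*}=1$, so $\frac1{p^*}=1-\frac1p$, and verify that $B_p^{\pm}(i,j)=B_{p^*}^{\pm}(n-i,n-j)$ by direct substitution into the two formulas, the $nb$ shift and the $1-\frac1p$ constants combining exactly.

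The step I expect to be the main obstacle is part (0), and more generally any identity where the two sides do \emph{not} simply have equal $B$ but instead require the re-indexing $r\mapsto n+1-r$ of the alternating binomial sum. There the equality is not pointwise in $r$; it holds only after summing, and one must invoke the symmetry of the full sum $\sum_r(-1)^r\binom{n+1}{r}\binom{n+B-br}{n}$ under reflection of $B$ about the appropriate center. I would isolate this as a separate combinatorial lemma: for the relevant values of $B$ arising here, the sum is invariant (resp.\ transforms correctly) under $B\mapsto (n+1)b-2\cdot\frac1p\,\text{-shift}-B$, which is exactly the reflection that (0) and the negative-base cases demand. Establishing that reflection identity cleanly — verifying that the shifted indicator ranges and the reversed binomials match term by term after the index flip — is where the genuine work lies; once it is in hand, all four parts follow by bookkeeping on $B_p^{\pm}$.
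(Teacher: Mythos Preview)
Your plan for (1) and (2) is fine and matches the paper: one checks directly that $B_1^-(i,j)=B_1^+(i,n-1-j)$ and $B_2^-(i,j)=B_2^+(i,n-j)$, and equal $B$ gives equal probability.

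There is, however, a genuine error in your treatment of (3). The claim that $B_p^{\pm}(i,j)=B_{p^*}^{\pm}(n-i,n-j)$ is false. If you carry out the substitution with $\tfrac{1}{p^*}=1-\tfrac{1}{p}$ you find instead
\[
B_{p^*}^{\pm}(n-i,n-j)=(n+1)(b-1)-B_p^{\pm}(i,j),
\]
so the two $B$'s are related by the reflection $B\mapsto (n+1)(b-1)-B$, not by equality. Thus (3) falls into exactly the same category as (0): it requires the reflection symmetry of the sum, not the trivial ``equal $B$'' argument you proposed for it.

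For the reflection itself, your plan to establish it by re-indexing $r\mapsto n+1-r$ in the alternating binomial sum can be made to work, but it is the hard way. The paper avoids this computation entirely by going back one step before the closed formula: it observes that $b^nP_p^{\pm}(i,j)$ equals the number of tuples $(X_1,\dots,X_n,Y)\in{\cal D}(b)^{n+1}$ with $X_1+\cdots+X_n+Y=B_p^{\pm}(i,j)$. This count depends only on $B$ and is manifestly invariant under $B\mapsto (n+1)(b-1)-B$ via the bijection $X_k\mapsto b-1-X_k$. With that in hand, (3) is immediate from the displayed relation above, and (0) follows the same way (or, as the paper phrases it, directly from the defining recursion (\ref{bnp}) by the same digit-reversal bijection). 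So the ``main obstacle'' you anticipated dissolves once you use the combinatorial interpretation of $P_p^{\pm}$ rather than its inclusion--exclusion formula.
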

\begin{proof}
(0)
follows from (\ref{bnp}). 
By definition, 
$b^n P_p^{\pm}(i,j)$
is equal to the number of 
$(X_1, \cdots, X_n, Y) \in {\cal D}(b)^{n+1}$ 
such that 
$X_1 + \cdots + X_n + Y = B_p^{\pm}(i,j)$. 
Thus 
we can show 
(1) - (3) 
from the following relations : 
$B_1^- (i,j) = B_1^+ (i, n-1-j)$, 
$B_2^- (i,j) = B_2^+ (i, n-j)$, 
$(n+1)(b-1) - B^+_p(i, j)
=
B^+_{p^*}(n-i, n-j)$. 
\QED
\end{proof}
\noindent
{\it Proof of Theorem \ref{p1}}\\
Let 
$r(i)$
be the ``reverse" of 
$i$ : 
$r(i) := n-1-i$, 
$i \in {\cal C}_1(n)$.
By 
Proposition \ref{plusminus1}, 
we have 
$P_1^- (i, j)
=
P_1^+ (i, r(j))
=
P_1^+ (r(i), j)$.
Hence, denoting by  
$\{ \kappa^{\pm}_r \}_{r=0}^{\infty}$
the 
$(\pm b, n, 1)$-carries processes 
respectively, we have
\beq
&&
{\bf P} \left(
\kappa^-_1 = j_1, \kappa^-_2 = j_2, 
\kappa^-_3=j_3, 
\cdots, \kappa^-_N = j_N
\, | \, 
\kappa^-_0 = 0
\right)
\\
&&=
{\bf P} \left(
\kappa^+_1 = r(j_1), \kappa^+_2 = j_2, 
\kappa^+_3=r(j_3), 
\cdots, \kappa^+_N = r^N(j_N)
\, | \, 
\kappa^+_0 = 0
\right).
\eeq
Let 
$\{ \sigma_r \}_{r=0}^{\infty}$
be the sequence induced by 
$(b, n, 1)$-shuffle 
with 
$\sigma_0 = id$. 
Then by Theorem \ref{generalshuffle} with $p=1$, 
\begin{eqnarray}
&&
{\bf P}\left(
\kappa^+_1 = r(j_1), \kappa^+_2 = j_2, %\kappa^+_3=r(j_3), 
\cdots, 
\kappa^+_N = r^N(j_N)
\, | \, 
\kappa^+_0 = 0
\right)
\nonumber
\\
&=&
{\bf P}\left(
d(\sigma_1) = r(j_1), d(\sigma_2) = j_2, %d(\sigma_3) = r(j_3),  
\cdots, 
d(\sigma_N) = r^N(j_N)
\, | \,
\sigma_0 = id
\right).
\label{shuffle1}
\end{eqnarray}
Let 
$\sigma^R \in S_n$
be the ``reverse" of 
$\sigma \in S_n$ 
defined by 
$(\sigma^R)(j) := n+1-j$, 
$j=1, 2, \cdots, n$.
and consider a Markov chain 
$\{ (\sigma_r)^R \}_{r=0}^{\infty}$.
Noting 
$d(\sigma^R) = n-1-d(\sigma) = r(d(\sigma))$, 
the descent of 
$\sigma_r$
and that of 
$ (\sigma_r)^R$
evolve as follows.  
%where we denoted 
%
%$j' = r(j)$.
%
\beq
\;d(\sigma_0)&=&0 
\quad
\textcolor{red}{\longrightarrow}
\quad
d(\sigma_1)=r(j_1)
\longrightarrow
\quad\;
d(\sigma_2)=j_2
\quad
\textcolor{red}{\longrightarrow}
\quad\;\;
d(\sigma_3)=r(j_3)
\\
&&
\qquad\qquad\qquad
\textcolor{red}{\downarrow}
\qquad\qquad\qquad\quad\quad
\textcolor{red}{\uparrow}
\qquad\qquad\qquad\qquad\quad
\textcolor{red}{\downarrow}
\\
d((\sigma_0)^R)&=&r(0)
\longrightarrow
\;
d((\sigma_1)^R)=j_1
\;\;
\textcolor{red}{\longrightarrow}
\;
d((\sigma_2)^R)=r(j_2)
\longrightarrow
d((\sigma_3)^R)=j_3
\eeq
Since 
$\tilde{\sigma}_r = 
(\sigma_r)^R (r : odd), 
=\sigma_r (r : even)$, 
we have 
\begin{eqnarray}
&&
{\bf P}\left(
d(\sigma_1) = r(j_1), d(\sigma_2) = j_2, %d(\sigma_3) = r(j_3), 
\cdots, 
d(\sigma_N) =r^N (j_N)
\, | \,
\sigma_0 = id
\right)
\nonumber
\\
&&=
{\bf P}\left(
d(\tilde{\sigma}_1) = j_1, d(\tilde{\sigma}_2) = j_2, %d(\tilde{\sigma}_3) = j_3, 
\cdots, 
d(\tilde{\sigma}_N) = j_N
\, | \,
\sigma_0 =  id
\right).
\label{shuffle2}
\end{eqnarray}
By 
(\ref{shuffle1}), (\ref{shuffle2})
the proof of Theorem \ref{p1} is completed.
\QED
%
%%%%%
\subsection{General case}
The proof of 
Theorem \ref{generalminusshuffle} 
is essentially parallel to that of 
Theorem \ref{generalshuffle}, 
but since 
$b \equiv -1 \pmod p$, 
we need two more steps in the construction of the bijection. 
We begin by preparing some lemmas and more notions of orders and descents. 
\begin{lemma}
\label{reverse}
Let 
$X' := b-1-X$
be the reverse of digit 
$X \in {\cal D}(b)$. 
Then 
\beq
i + X_1 + \cdots + X_n + A_- (b)
=
(n-j) b + s
\eeq
is equivalent to 
\beq
(n-i) + X'_1 + \cdots + X'_n + A'_- (b)
=
j b + s'.
\eeq
\end{lemma}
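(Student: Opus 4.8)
Looking at this lemma, I need to prove an equivalence between two carries equations under the "reverse" operation on digits.

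Let me understand what's being stated. We have:
- Original: $i + X_1 + \cdots + X_n + A_-(b) = (n-j)b + s$
- Transformed: $(n-i) + X_1' + \cdots + X_n' + A_-'(b) = jb + s'$

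where $X_k' = b-1-X_k$.

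Let me figure out what $A_-'(b)$ and $s'$ should be. The natural guess is $A_-'(b) = b-1-A_-(b)$ (the reverse of $A_-(b)$) and $s' = b-1-s$ (the reverse of $s$).

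Let me verify by substitution. Start with the transformed equation and substitute:
- $X_k' = b-1-X_k$, so $\sum X_k' = n(b-1) - \sum X_k$
- $A_-'(b) = b-1-A_-(b)$
- $s' = b-1-s$

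LHS of transformed: $(n-i) + n(b-1) - \sum X_k + (b-1) - A_-(b)$
$= n - i + (n+1)(b-1) - \sum X_k - A_-(b)$

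RHS of transformed: $jb + (b-1-s) = jb + b - 1 - s$

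So we need:
$$n - i + (n+1)(b-1) - \sum X_k - A_-(b) = jb + b - 1 - s$$

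From the original: $\sum X_k = (n-j)b + s - i - A_-(b)$

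Substitute:
$$n - i + (n+1)(b-1) - [(n-j)b + s - i - A_-(b)] - A_-(b) = jb + b - 1 - s$$
$$n + (n+1)(b-1) - (n-j)b - s = jb + b - 1 - s$$
$$n + (n+1)(b-1) - (n-j)b = jb + b - 1$$

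LHS: $n + (n+1)b - (n+1) - (n-j)b = n + (n+1)b - n - 1 - nb + jb = -1 + b + jb = (j+1)b - 1$

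Wait let me recompute: $n - n - 1 + (n+1)b - nb + jb = -1 + b + jb = b - 1 + jb$

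RHS: $jb + b - 1$. ✓

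So the identity holds with $A_-'(b) = b-1-A_-(b)$ and $s' = b-1-s$.

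The plan is to verify that the two equations are related by the substitution $X_k \mapsto X_k' = b-1-X_k$, together with the reverse operations $s \mapsto s' = b-1-s$ and $A_-(b) \mapsto A_-'(b) = b-1-A_-(b)$. Since $X \mapsto b-1-X$ is an involution on $\mathcal{D}(b)$, establishing the equivalence in one direction suffices. First I would substitute $X_k = b-1-X_k'$, $s = b-1-s'$, and $A_-(b) = b-1-A_-'(b)$ into the original equation, using $\sum_{k=1}^n X_k = n(b-1) - \sum_{k=1}^n X_k'$.

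The key algebraic step is to collect the constant terms. After substitution, the original equation becomes
\begin{equation}
i + n(b-1) - \sum_{k=1}^n X_k' + (b-1) - A_-'(b) = (n-j)b + (b-1) - s'.
\nonumber
\end{equation}
Rearranging to isolate the transformed-variable terms on the left, I would bring the constant $(n+1)(b-1)$ together and simplify the right-hand side. The crucial numerical identity to verify is that the surviving constant on each side matches, namely that $(n+1)(b-1) + i - (n-j)b$ reduces to $jb - (n-i)$ after accounting for the $A_-'(b)$ and $s'$ shifts; this is where the precise value $A_-'(b) = b-1-A_-(b)$ (equivalently the reverse of $A_-(b)$) is forced, and where one checks consistency with the defining formula $A_-(b) = \frac{b+1}{p} - 1$.

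The main obstacle, such as it is, lies in bookkeeping: tracking the $(n+1)$ copies of the reversal constant $(b-1)$ — one for each of the $n$ summands $X_k$, plus one for the extra term $A_-(b)$ — and confirming that $s'$ stays in $\mathcal{D}(b)$ so that the transformed equation is genuinely a valid instance of the $(-b,n,p)$-carries relation with $i \mapsto n-i$ and $j \mapsto$ the complementary carry. There is no deep idea here; the content is precisely that the digit-reversal involution $X \mapsto b-1-X$ intertwines the carries equation with itself under $i \leftrightarrow n-i$ and $j \leftrightarrow$ its reverse, and I would present the computation as a short chain of equalities, noting at the end that involutivity of the reversal gives the converse direction immediately.
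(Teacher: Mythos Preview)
Your proof is correct. The paper itself does not give a proof of this lemma (it is stated and immediately followed by the next lemma), treating it as a routine verification; your direct substitution with $X_k' = b-1-X_k$, $s' = b-1-s$, and $A_-(b)' = b-1-A_-(b)$ is exactly the intended computation, and you have correctly checked that this last identity is consistent with the paper's formula $A_-(b)' = \frac{b+1}{p^*}-1$.
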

\begin{lemma}
\label{A}
The composition of 
$j$-steps of  $(-b, n, p)$-carries processes is equivalent to one step of 
$(-b^{2k-1}, n, p)$-carries process
$(j=2k-1)$, 
or one step of 
$(b^{2k}, n, p)$-carries process
$(j=2k)$. 
\end{lemma}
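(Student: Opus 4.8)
The plan is to read off the composition directly from the spectral decomposition in Theorem \ref{left}, exploiting the crucial feature that the eigenvector matrix $L_p$ depends only on $p$ and $n$, and in particular is independent both of the base and of the choice of sign. Thus for the $(-b, n, p)$-carries process one has $P_p^- = L_p^{-1} D_b^- L_p$ with $D_b^- = \mbox{diag}(1, (-1/b), \ldots, (-1/b)^{\sharp {\cal C}_p(n)-1})$, and the $j$-fold composition is simply
\[
(P_p^-)^j = L_p^{-1} (D_b^-)^j L_p ,
\]
so the whole argument reduces to computing the $j$-th power of a diagonal matrix.

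The computation is immediate: the $m$-th diagonal entry of $(D_b^-)^j$ is $\bigl((-1/b)^m\bigr)^j = \bigl((-1)^j / b^j\bigr)^m$. When $j = 2k$ is even this equals $(1/b^{2k})^m$, so $(D_b^-)^{2k} = D_{b^{2k}}^+$ and hence $(P_p^-)^{2k} = L_p^{-1} D_{b^{2k}}^+ L_p$, which by Theorem \ref{left} is exactly the transition matrix of the $(b^{2k}, n, p)$-carries process. When $j = 2k-1$ is odd the entry equals $(-1/b^{2k-1})^m$, so $(D_b^-)^{2k-1} = D_{b^{2k-1}}^-$ and $(P_p^-)^{2k-1} = L_p^{-1} D_{b^{2k-1}}^- L_p$ is the transition matrix of the $(-b^{2k-1}, n, p)$-carries process. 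Since ${\cal C}_p(n)$ depends only on $p$ and $n$, the state space is unchanged under all these passages, so the identification is a genuine equivalence of Markov chains.

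The only point requiring care is to verify that the two target processes actually exist, that is, that their defining congruences hold. Under the standing hypothesis $b \equiv -1 \pmod p$ (equivalently $(b+1)/p \in {\bf N}$), we have $b^{2k} \equiv 1$ and $b^{2k-1} \equiv -1 \pmod p$, because $b+1$ divides both $b^{2k}-1$ and $b^{2k-1}+1$; hence $(b^{2k}-1)/p$ and $(b^{2k-1}+1)/p$ are integers and the $(b^{2k}, n, p)$- and $(-b^{2k-1}, n, p)$-processes are well defined. I expect this consistency check, rather than the matrix computation, to be the only mild obstacle; alternatively one could give a purely combinatorial proof by iterating Lemma \ref{composition} together with the digit-reversal of Lemma \ref{reverse}, but the spectral route above is the shortest.
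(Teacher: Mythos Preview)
Your proof is correct and is precisely the first of the two routes the paper indicates: the paper writes ``It follows either from Theorem \ref{left} or from the following equalities'' and then records the combinatorial identities $(\overbrace{A_-(b), A_-(b)', \ldots, A_-(b)}^{2k-1})_b = A_-(b^{2k-1})$ and $(\overbrace{A_-(b)', A_-(b), \ldots, A_-(b)}^{2k})_b = A_+(b^{2k})$, while you spell out the Theorem \ref{left} route in full and merely allude to the combinatorial one. Your added verification that the target processes are well defined (i.e.\ that $b^{2k}\equiv 1$ and $b^{2k-1}\equiv -1 \pmod p$) is a welcome point the paper leaves implicit.
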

\begin{proof}
It follows either from Theorem \ref{left} or from the following equalities.
\beq
&&
(\overbrace{A_-(b), A_-(b)', A_-(b), \cdots, A_-(b)', A_-(b)}^{2k-1})_b
=
A_- (b^{2k-1})
\\
&&
(\overbrace{A_-(b)', A_-(b), \cdots, A_-(b)', A_-(b)}^{2k})_b
=
A_+ (b^{2k}).
\eeq
LHS of above is defined in (\ref{basebexpansion}).
\QED
\end{proof}
We next define 
the dash-counterparts for 
bar-, and tilde-descents.
Let 
${\bf x} := (x_1, \cdots, x_n) \in {\cal D}(b)^n$. \\
\noindent
(1)
We say that 
${\bf x}$
has a {\bf bar'-descent} at 
$i$
if and only if
\\
(i)
$x_i > x_{i+1}$ ($i=1, 2, \cdots, n-1$), 
and 
(ii)
$x_n > A_-(b)'=\frac {b+1}{p^*}-1$ ($i=n$). 
\\
We denote by 
$\bar{d'}_b({\bf x})$
the number of bar'-descents of 
${\bf x}$. 
\\
\noindent
(2)
We define an order
$\prec'$
on 
${\cal D}(b)$ 
as follows : 
writing 
$x = j_x p + r_x \in {\cal D}(b)$
with 
$r_x =0, \cdots, p-1$, 
we say 
$x \prec' y$
if and only if 
$(j_x, r_x) <' (j_y, r_y)$. 
where an order on 
$(j, r)'s$
are given 
by the following inequalities
($c=A_-(b)$). 
\beq
&&
(0,0) < (1,0) < \cdots < (c-1,0)<(c,0) \\
<&&
(0, 1) < (1, 1) < \cdots < (c, 1)
\\
<&&
(0, 2) < (1, 2) < \cdots < (c, 2)
\\
<&&
 \cdots 
\\
<&&
(0, p-1) < (1, p-1) < \cdots < (c-1, p-1).
\eeq
We say that 
${\bf x} := (x_1, \cdots, x_n) \in {\cal D}(b)^n$
has a {\bf tilde'-descent} at 
$i$ 
if and only if 
(i)
$x_i \succ' x_{i+1}$, 
($i=1, 2, \cdots, n-1$), 
and  
(ii)
$x_n \equiv p-1 \pmod p$. \\
We denote by 
$\tilde{d}'_b({\bf x})$
the number of tilde'-descents of 
${\bf x}$. \\
Since 
$b \equiv -1 \pmod p$, 
$x <' y$
if and only if 
$f_b(x) \prec' f_b(y)$, 
and 
$x > A_-(b)'$
if and only if 
$f_b (x) \equiv p-1 \pmod p$.
Hence we have
\begin{equation}
\overline{d'}_b({\bf x})
=
\tilde{d}'_b(f_b({\bf x})).
\label{bartildedash}
%\quad\cdots (2)
\end{equation}
By definition of 
$(b, n, p)$-shuffle, 
the number of dash-descents of 
$\pi_b [ f_b({\bf x}) ]$
is equal to tilde'-descents of 
$f_b({\bf x})$ 
in 
$\prec'$
order : 
\begin{equation} 
\tilde{d}'_b(f_b({\bf x}))
=
d'(\pi_b[f_b({\bf x})]).
\label{tildeshuffledash}
%\quad\cdots (3)
\end{equation}
Finally 
we state an analogue of 
Lemma \ref{star}. 
For 
$\sigma \in G_{p, n}$, 
let 
\[
\sigma'
:=
\left(
(\sigma(1), -\sigma^c(1)), \cdots, (\sigma(n), -\sigma^c(n))
\right)
\in G_{p, n}.
\]
\begin{lemma}
%\mbox{}\\
\label{compositiondash}
Let 
$b_1 \equiv -1 \pmod p$.
Then we have 
\beq
\pi_{b_2b_1}[({\bf A}_2 {\bf A}_1)^{\sharp}]
=
\pi_{b_2} [ {\bf A}_2 ]' \circ \pi_{b_1} [ {\bf A}_1 ].
\eeq
\end{lemma}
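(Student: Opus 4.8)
The plan is to mirror the proof of Lemma \ref{star}, isolating the two data that determine an element of $G_{p,n}$ --- the underlying permutation and the color vector --- and checking each separately. Write $\sigma_1 := \pi_{b_1}[{\bf A}_1]$, $\sigma_2 := \pi_{b_2}[{\bf A}_2]$, $\mu := \pi_{b_2 b_1}[({\bf A}_2 {\bf A}_1)^{\sharp}]$, and let $\tau := \sigma_2' \circ \sigma_1$ be the target on the right-hand side. Since $\sigma_2'$ has the \emph{same} underlying permutation as $\sigma_2$ and differs only by negating its colors, the composition rule in $G_{p,n}$ gives $\tau(i) = \sigma_2(\sigma_1(i))$ and $\tau^c(i) = -\sigma_2^c(\sigma_1(i)) + \sigma_1^c(i)$.

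For the permutation part I would note that the underlying permutation of $\mu$ is governed entirely by the lexicographic order on $({\bf A}_2 {\bf A}_1)^{\sharp}$, equivalently on the pairs $\tilde{\bf A}^{(2)}_{i,*}$, which does not see the residues modulo $p$ and is identical to the sorting in Lemma \ref{star}. Hence $\mu(i) = (\sigma_2 \circ \sigma_1)(i) = \sigma_2(\sigma_1(i)) = \tau(i)$, with no change from the positive-base case.

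The only genuinely new step is the color bookkeeping, and this is where the hypothesis $b_1 \equiv -1 \pmod p$ enters. By the definition of $\pi_{b_2 b_1}$ we have $\mu^c(i) \equiv a_i \pmod p$ with $a_i = a_{i,*}^{(2)} b_1 + a_{i,*}^{(1)}$; substituting $b_1 \equiv -1$ yields $\mu^c(i) \equiv -a_{i,*}^{(2)} + a_{i,*}^{(1)} \pmod p$. On the other side, the same $*$-map identities underlying Lemma \ref{star} give $\sigma_1^c(i) \equiv a_{i,*}^{(1)}$ (immediate, since $a_{i,*}^{(1)} = a_i^{(1)}$) and $\sigma_2^c(\sigma_1(i)) \equiv a_{i,*}^{(2)} \pmod p$, so that $\tau^c(i) = -\sigma_2^c(\sigma_1(i)) + \sigma_1^c(i) \equiv -a_{i,*}^{(2)} + a_{i,*}^{(1)} \pmod p$. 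Comparing the two computations gives $\mu^c(i) = \tau^c(i)$ in ${\bf Z}_p$.

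Putting the two parts together yields $\mu = \tau = \sigma_2' \circ \sigma_1$, as claimed. The main obstacle --- really the only substantive point --- is to see that the sign flip coming from $b_1 \equiv -1 \pmod p$ acting on the second-level digit $a_{i,*}^{(2)}$ is exactly cancelled by the dash operation, which negates the colors of $\sigma_2$. Verifying this cleanly requires the individual color identity $\sigma_2^c(\sigma_1(i)) \equiv a_{i,*}^{(2)} \pmod p$, which is the per-factor decomposition of the color established implicitly in Lemma \ref{star}; everything else is a transcription of that proof.
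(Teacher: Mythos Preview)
Your proposal is correct and follows precisely the route the paper intends: the paper states Lemma~\ref{compositiondash} without proof, as a direct analogue of Lemma~\ref{star}, and your argument adapts that proof verbatim except for the one new ingredient --- the congruence $b_1 \equiv -1 \pmod p$ flips the sign of $a_{i,*}^{(2)}$ in the color computation, which is exactly absorbed by the dash operation on $\sigma_2$. The per-factor identity $\sigma_2^c(\sigma_1(i)) \equiv a_{i,*}^{(2)} \pmod p$ that you invoke is indeed implicit in the proof of Lemma~\ref{star} (it is the decomposition of the line $\sigma^c(i) \equiv a_{i,*}^{(2)} + a_{i,*}^{(1)}$ there), so nothing is missing.
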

{\it Proof of Theorem \ref{generalminusshuffle}}\mbox{}\\
It suffices to show the counterpart of 
(\ref{aim}). 
We suppose 
$N = 2k$
is even ; the proof for odd case is similar. 
Let 
$\kappa^{-'}_i := n - \kappa_i^-$
be the ``reverse" of 
$\kappa_i^-$.
(\ref{bnp}) 
says that, in terms of usual carries process, 
we have to reverse the carry before adding it to the next digit : 
\beq
\begin{array}{cccccc}
\kappa^{-'}_{2k} & \kappa^{-'}_{2k-1} & \cdots  & \kappa^{-'}_2 & \kappa^{-'}_1 & \\
\downarrow &\downarrow & \cdots  & \downarrow & \downarrow & \\
\kappa^{-}_{2k} &\kappa^{-}_{2k-1} & \cdots  & \kappa^{-}_2 & \kappa^{-}_1 & 0 \\\hline
X_{1}^{(2k+1)} & X_{1}^{(2k)} & \cdots  & X_{1}^{(3)} & X_{1}^{(2)} & X_{1}^{(1)} \\
\vdots & \vdots &  &\vdots  & \vdots & \vdots \\
X_{n}^{(2k+1)} & X_{n}^{(2k)} & \cdots  & X_{n}^{(3)} & X_{n}^{(2)} & X_{n}^{(1)} \\
A_-(b) & A_-(b) & \cdots  & A_-(b) & A_-(b) & A_-(b) \\\hline
s_{2k+1} & s_{2k} & \cdots  & s_3 & s_2 & s_1
\end{array}
\eeq
We reverse the numbers in the even digits and use Lemma \ref{reverse}. 
\beq
\stackrel{reverse}{\Longrightarrow}
\quad
\begin{array}{cccccc}
\kappa^{-}_{2k} &\kappa^{-'}_{2k-1} & \cdots & \kappa^{-}_2 & \kappa^{-'}_1 & 0 \\\hline
X_{1}^{(2k+1)} & X_{1}^{(2k)'} & \cdots  & X_{1}^{(3)} & X_{1}^{(2)'} & X_{1}^{(1)} \\
\vdots & \vdots &  &\vdots  & \vdots & \vdots \\
X_{n}^{(2k+1)} & X_{n}^{(2k)'} & \cdots  & X_{n}^{(3)} & X_{n}^{(2)'} & X_{n}^{(1)} \\
A_-(b) &A_-(b)' &\cdots & A_-(b) & A_-(b)' & A_-(b) \\\hline
s_{2k+1} & s'_{2k} & \cdots & s_3 & s'_2 & s_1 
\end{array}
\eeq
Sending it by the bar map(\ref{barmap}), we have 
\beq
\stackrel{bar \; map}{\Longrightarrow}
\quad
\begin{array}{cccccc}
\kappa^{-}_{2k} &\kappa^{-'}_{2k-1} & \cdots & \kappa^{-}_2 & \kappa^{-'}_1 & 0 \\\hline
\overline{X}_{1}^{(2k+1)} & \overline{X}_{1}^{(2k)'} & \cdots  & \overline{X}_{1}^{(3)} & \overline{X}_{1}^{(2)'} & \overline{X}_{1}^{(1)} \\
\vdots & \vdots &  &\vdots  & \vdots & \vdots \\
\overline{X}_{n}^{(2k+1)} & \overline{X}_{n}^{(2k)'} & \cdots  & \overline{X}_{n}^{(3)} & \overline{X}_{n}^{(2)'} & \overline{X}_{n}^{(1)} \\
%
%A_-(b)' &A_-(b)' &\cdots & A_-(b) & A_-(b)' & A_-(b) \\\hline
%
s_{2k+1} & s_{2k}' & \cdots & s_3 & s_2' & s_1 
\\\hline
\end{array}
\eeq
By 
Lemmas \ref{composition}, \ref{A}, 
and by the definition of 
bar-, and bar'-descent, we have
\[
\kappa^{-'}_j = \overline{d'}_{b^j} (\overline{\bf X}^{(j)}) 
\quad
j : odd, 
\quad
\kappa^{-}_j = \overline{d}_{b^j} (\overline{\bf X}^{(j)}) 
\quad
j : even.
\]
where 
$\overline{\bf X}^{(j)}$
is defined in 
(\ref{barvector})
but 
$\overline{X}_i^{(j)}$'s 
with even 
$j$'s
are reversed. 
Let 
$\{ Y_i^{(j)} \}$ 
as defined in (\ref{fb}). 
Since 
$b^j \equiv 1 (j : even), -1 (j : odd) 
\pmod p$, 
equations 
(\ref{bartilde}), (\ref{bartildedash})
imply that 
\[
\overline{d'}_{b^j} (\overline{\bf X}^{(j)})
=
\tilde{d'}_{b^j}({\bf Y}^{(j)})
\quad
j : odd, 
\quad
\overline{d}_{b^j} (\overline{\bf X}^{(j)})
=
\tilde{d}_{b^j}({\bf Y}^{(j)})
\quad
j : even.
\]
Take
${\bf A}_{2k}, \cdots, {\bf A}_1 \in {\cal D}(b)^n$ 
such that 
$\{ Y_i^{(j)} \} = 
({\bf A}_{2k} \cdots {\bf A}_1)^*$. 
Then by Lemma \ref{compositiondash} 
\beq
\sigma_r := \cases{
\pi_b [{\bf A}_r] \circ \pi_b [{\bf A}_{r-1}]' \circ \cdots \circ \pi_b [{\bf A}_2]'\circ \pi_b [{\bf A}_1]
&
$(r : odd)$ \cr
\pi_b [{\bf A}_r]' \circ \pi_b [{\bf A}_{r-1}] \circ \cdots \circ \pi_b [{\bf A}_2]'\circ \pi_b [{\bf A}_1]
&
$(r : even)$ \cr
}
\eeq
($r=1, 2, \cdots, 2k$)
have the desired properties. 
\QED\\

%%%
\noindent
{\bf Example : }\\
Let 
$p = 3$, $b = 3 \cdot 3 -1 = 8$, 
$N=4$.
Then 
$A_-(b) = 2$, 
$A_-(b)' = 5$. 
In the example below, 
$\kappa^-_1=3$, 
$\kappa^-_2=1$, 
$\kappa^-_3=2$, 
$\kappa^-_4 = 4$. 
\beq
&&
\begin{array}{ccccc}
0 & 2 & 3 & 1 & \\
\downarrow & \downarrow & \downarrow  & \downarrow  & \\
\fbox{4}
&
\fbox{2} 
&
\fbox{1}
&
\fbox{3}
&
\fbox{0}   
\\\hline
& 0 & 4 & 7 & 4 \\
& 1 & 2 & 5 & 3 \\
& 2 & 5 & 4 & 1 \\
& 0 & 3 & 6 & 2 \\ 
%\hline
& 2 & 2 & 2 & 2 \\\hline
& 7 & 1 & 3 & 4 
\end{array}
\quad
\stackrel{reverse}{\Longrightarrow}
\quad
\begin{array}{cccc}
2 & \fbox{1} & 1 & \fbox{0} \\\hline
7 & 4 & 0 & 4 \\
6 & 2 & 2 & 3 \\
5 & 5 & 3 & 1 \\
7 & 3 & 1 & 2 \\
%\hline
5 & 2 & 5 & 2 \\\hline
0 & 1 & 4 & 4 
\end{array}
\quad
\stackrel{bar \; map}{\Longrightarrow}
\quad
\begin{array}{cccc}\hline
\textcolor{red}{7} & 4 & 0 & 4 \\
\textcolor{red}{5} & \textcolor{red}{6} & 2 & \textcolor{red}{7} \\
\textcolor{red}{3} & 3 & 6 & 0 \\
\textcolor{red}{2} & \textcolor{red}{6} & \textcolor{red}{7} & 2 \\
%\hline
0 & 1 & 4 & 4 \\\hline
\end{array}
\\
&&
\quad
\stackrel{f_{b^4}}{\Longrightarrow}
\quad
\begin{array}{cccc}
%{\bf A}_4 & {\bf A}_3 & {\bf A}_2 & {\bf A}_1 \\
\hline
6 & 4 & 1 & 4 \\
1 & 3 & 0 & 5 \\
2 & 3 & 2 & 0 \\
0 & 4 & 5 & 6 \\\hline
\end{array}
\quad
\stackrel{\star^{-1}}{\Longrightarrow}
\quad
\begin{array}{cccc} 
{\bf A}_4 & {\bf A}_3 & {\bf A}_2 & {\bf A}_1 \\
\hline
1 & 3 & 2 & 4 \\
2 & 4 & 1 & 5 \\
6 & 3 & 0 & 0 \\
0 & 4 & 5 & 6 \\\hline
\end{array}
\\
&& \\
&&
\quad
\stackrel{\pi}{\Longrightarrow}
\quad
\begin{array}{cccc} 
\pi[{\bf A}_4] & \pi[{\bf A}_3] & \pi[{\bf A}_2] & \pi[{\bf A}_1] \\
\hline
(2,1) & (1,0) & (3,2) & (2,1) \\
(3,2) & (3,1) & (2,1) & (3,2) \\
(4,0) & (2,0) & (1,0) & (1,0) \\
(1,0) & (4,1) & (4,2) & (4,0) \\\hline
\end{array}
\quad
\stackrel{reverse}{\Longrightarrow}
\quad
\begin{array}{cccc} 
\pi[{\bf A}_4]' & \pi[{\bf A}_3] & \pi[{\bf A}_2]' & \pi[{\bf A}_1]
\\\hline
(2,2) & (1,0) & (3,1) & (2,1) \\
(3,1) & (3,1) & (2,2) & (3,2) \\
(4,0) & (2,0) & (1,0) & (1,0) \\
(1,0) & (4,1) & (4,1) & (4,0) \\\hline
\end{array}
\eeq
Thus the corresponding sequence induced by 
$(b, n, p)$-shuffle is : 
\beq
\begin{array}{cccc}
\sigma_4 & \sigma_3 & \sigma_2 & \sigma_1 \\\hline
\textcolor{red}{(4,1)} & (3,1)& (2,0)& (2,1) \\
\textcolor{red}{(2,1)} & \textcolor{blue}{(1,2)} & (1,2) & \textcolor{blue}{(3,2)} \\
\textcolor{red}{(3,2)} & (2,1) & (3,1) & (1,0) \\
\textcolor{red}{(1,2)} & \textcolor{blue}{(4,2)} & \textcolor{red}{(4,1)} & (4,0) \\\hline
\end{array}
\eeq
Indeed, 
$d'(\sigma_1) = 4-3$, 
$d(\sigma_2) = 1$, 
$d'(\sigma_3)=4-2$, 
and 
$d(\sigma_4)=4$. 
%
%
%%%%%%%%%%%%%%%%%%%%%%%%%%%%%%%%%%
\section{Appendix}
%
%%%%%%%%
\subsection{Expectation and Variance of Carries}
In this subsection 
we prove Theorems \ref{zerostart}, \ref{pistart}.\\

\noindent
{\it Proof of Theorem \ref{zerostart}}\\
We discuss 
$(+b)$-case only ;  
$(-b)$-case can be proved similarly. \\
(1)
(\ref{different})
implies that 
%By \cite{Miller}, Corollary 9.1, 
%
\[
\tilde{u}_1 = \{ \tilde{u}_{i, 1} \}_{i=0}^n, 
\quad
\tilde{u}_1(i)
:=
i + \frac 1p - \frac {n+1}{2}
\]
is an right eigenvector of 
$P_p^+$
with eigenvalue 
$b^{-1}$, 
by which we compute
%
%It then follows that 
%
\beq
{\bf E}[ \kappa_r \, | \, \kappa_0 = i]
&=&
\sum_j
j {\bf P}^{(r)}(i, j) 
\\
&=&
\sum_j
{\bf P}^{(r)} (i, j)
\left(
\tilde{u}_1(j) - \frac 1p + \frac {n+1}{2}
\right)
\\
&=&
\frac {1}{b^r}
\tilde{u}_1(i) 
- \frac 1p + \frac {n+1}{2}
\\
&=&
\frac {1}{b^r}
\left( i + \frac 1p - \frac {n+1}{2} \right)
- \frac 1p + \frac {n+1}{2}.
\eeq
(2)
At first we compute 
${\bf E}[ \kappa_s^2 \, | \, \kappa_0=i ]$.
In order to do that, 
we need a right eigenvector 
$\tilde{u}_2= \{
\tilde{u}_{i, 2} 
\}_{i=0}^n$
with eigenvalue 
$b^{-2}$. 
After some computation, 
we have  
\beq
\tilde{u}_2(i)
&=&
\left(
i + \frac 1p - \frac {n+1}{2}
\right)^2
-
\frac {n+1}{12}.
\eeq
Computing 
similarly as we did in (1), we obtain
\begin{eqnarray}
&&
{\bf E}\left[
\left(
\kappa_r + \frac 1p -
\frac {n+1}{12}
\right)^2
\, \Biggl| \, \kappa_0 = i
\right]
\nonumber
\\
&&\qquad=
\frac {1}{b^{2r}}
\left\{
\left(
i + \frac 1p - \frac {n+1}{2}
\right)^2
-
\frac {n+1}{12}
\right\}
+
\frac {n+1}{12}.
\label{variance1}
%\quad\cdots (1)
\end{eqnarray}
On the other hand 
\begin{eqnarray}
&&
{\bf E}
\left[
\left(
\kappa_r + \frac 1p - \frac {n+1}{12}
\right)^2
\, \Biggl| \, \kappa_0 = i
\right]
=
{\bf E}[ \kappa_r^2 \,|\, 
\kappa_0 = i ]
\nonumber
\\
&& \qquad\quad
+
2 \left(
\frac 1p - \frac {n+1}{12}
\right)
{\bf E}[ \kappa_r \, | \, 
\kappa_0 = i ]
+
\left(
\frac 1p - \frac {n+1}{12}
\right)^2.
\label{variance2}
\end{eqnarray}
By 
(\ref{variance1}), (\ref{variance2})
we have 
\begin{eqnarray}
&&
{\bf E}[ \kappa_r^2 
\, | \, \kappa_0 = i ]
=
\frac {1}{b^{2r}}
\left(
i + \frac 1p - \frac {n+1}{2}
\right)^2
+
\frac {n+1}{12}
\left(
1 - \frac {1}{b^{2r}}
\right)
\nonumber
\\
&& \qquad\quad
-2
\left(
\frac 1p - \frac {n+1}{2}
\right)
{\bf E}[ \kappa_r \,| \, \kappa_0 = i]
-
\left(
\frac 1p - \frac {n+1}{2}
\right)^2.
\label{variance3}
%\quad\cdots (3)
\end{eqnarray}
By squaring 
the equation in Theorem \ref{zerostart}(1), we have 
\begin{eqnarray}
&&
{\bf E}[ \kappa_r \, | \, \kappa_0 = i ]^2
=
\frac {1}{b^{2r}}
\left(
i + \frac 1p - \frac {n+1}{2}
\right)^2
\nonumber
\\
&&  \qquad
+
2 {\bf E}[ \kappa_r \, | \, \kappa_0 = i ]
\left(
- \frac 1p + \frac {n+1}{2}
\right)
-
\left(
- \frac 1p + \frac {n+1}{2}
\right)^2.
\label{variance4}
%\quad\cdots (4)
\end{eqnarray}
Taking difference between
(\ref{variance3})
and
(\ref{variance4}), 
we obtain the conclusion.\\
(3)
We compute
\beq
&&
{\bf E}[ \kappa_s \kappa_{s+r} 
\, | \, \kappa_0 =i ] 
\\
&=&
\sum_j
{\bf E}[ \kappa_{s+r} \, | \, \kappa_s = j]
\cdot j \cdot
{\bf P}(\kappa_s = j \, | \, \kappa_0=i)
\\
&=&
\sum_j
{\bf E}[ \kappa_{r} \, | \, \kappa_0 = j]
\cdot j \cdot
{\bf P}(\kappa_s = j \, | \, \kappa_0=i)
\\
&=&
\sum_j j 
\left\{
\frac {j}{b^r}
+
\left(
\frac {n+1}{2}- \frac 1p
\right)
\left(
1 - \frac {1}{b^r}
\right)
\right\}
{\bf P}(\kappa_s = j \, | \, \kappa_0=i)
\\
&=&
\frac {1}{b^r}
{\bf E}[ \kappa_s^2 \, | \, \kappa_0=i ] 
+
\left(
\frac {n+1}{2}- \frac 1p
\right)
\left(
1 - \frac {1}{b^r}
\right)
{\bf E}[ \kappa_s \, | \, \kappa_0=i ] 
\\
&=&
\frac {1}{b^r}
\frac {n+1}{12}
\left(
1 - \frac {1}{b^{2s}}
\right)
+
\frac {1}{b^r}
{\bf E}[ \kappa_s \, | \, \kappa_0=i ]^2
\\
&& \qquad
+
\left(
\frac {n+1}{2}- \frac 1p
\right)
\left(
1 - \frac {1}{b^r}
\right)
{\bf E}[ \kappa_s \, | \, \kappa_0=i ].
\eeq
In the last equality, 
we used Theorem \ref{zerostart}(2). 
We further compute, 
using Theorem \ref{zerostart}(1). 
\beq
&&
{\bf E}[ \kappa_{s+r} \kappa_s \, | \, \kappa_0=i ]
\\
&=&
\frac {1}{b^r}
\frac {n+1}{12}
\left(
1 - \frac {1}{b^{2s}}
\right)
\\
&& \quad
+ 
\frac {1}{b^r}
\left\{
\frac {1}{b^{s}}
\left(
i + \frac 1p - \frac {n+1}{2}
\right)
+
%\frac {1}{b^r}
\left(
\frac {n+1}{2} - \frac 1p 
\right)
\right\}
{\bf E}[ \kappa_s \, | \, \kappa_0=i]
\\
&&\qquad + 
\left(
\frac {n+1}{2} - \frac 1p 
\right)
\left(
1 - \frac {1}{b^r}
\right)
{\bf E}[ \kappa_s \, | \, \kappa_0=i ] 
\\
&=&
\frac {1}{b^r}
\frac {n+1}{12}
\left(
1 - \frac {1}{b^{2s}}
\right)
+
{\bf E}[ \kappa_{r+s} \, | \, \kappa_0=i ] 
\cdot
{\bf E}[ \kappa_{s} \, | \, \kappa_0=i ]
\eeq
which leads to the conclusion. 
\QED\\

\noindent
{\it Proof of Theorem \ref{pistart}}\\
Since 
$(\pm b, n, p)$-process 
is irreducible and aperiodic, 
Markov chain limit theorem yields 
$
{\bf E}[ \kappa_r \, | \, \kappa_0 = i]
\stackrel{r \to \infty}{\to}
{\bf E}_{\pi} [ \kappa_0 ].
$
Hence we can prove 
(1)
(resp. (2))
by taking 
$r \to \infty$
(resp. $s \to \infty$)
in 
Theorem \ref{zerostart}(1)
(resp. in Theorem \ref{zerostart}(3)). 
\QED
%
%%%%%%%%%%%%%%%%%%%%%%%%%%%%%%
\subsection{Alternative proof of 
Proposition \ref{onestep}}
Proposition \ref{onestep}
is also proved by the generating function method as is done in 
\cite{DF2}, Theorems 2.3, 4.4. 
We give them for the sake of completeness. 
\begin{lemma}
\label{shufflecounting}
\mbox{}\\
Let 
$b \equiv  1 \pmod p$ 
and let 
$\sigma \in G_{p, n}$. 
The probability 
of obtaining 
$\sigma$
after 
$r$ steps of a 
$(+b, n, p)$-shuffle
is equal to 
\[
{\bf P}\left(
\sigma_r = \sigma \right)
=
b^{-rn}
\left(
\begin{array}{c}
n + \frac {b^r - 1}{p} - d(\sigma^{-1}) \\ n
\end{array}
\right).
\]
\end{lemma}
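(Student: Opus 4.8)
The plan is to collapse the $r$ shuffles into one and then count GSR words by a substitution that converts the colour congruences into the colored descent statistic of $\sigma^{-1}$. By Lemma \ref{composition}(2), composing the $r$ independent $(+b,n,p)$-shuffles yields, in distribution, a single $(+b^r,n,p)$-shuffle; and $b \equiv 1 \pmod p$ gives $b^r \equiv 1 \pmod p$, so $(b^r-1)/p \in {\bf Z}$. Hence it suffices to prove, for an arbitrary base $B \equiv 1 \pmod p$, the one-step identity
\[
{\bf P}(\sigma_1 = \sigma) = B^{-n}\left(\begin{array}{c} n + \frac{B-1}{p} - d(\sigma^{-1}) \\ n \end{array}\right)
\]
and then set $B = b^r$. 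Since each word ${\bf A} \in {\cal D}(B)^n$ is equally likely, this is equivalent to showing that the number $N(\sigma)$ of words with $\pi_B[{\bf A}] = \sigma$ equals the displayed binomial coefficient, where I abbreviate $M := (B-1)/p$.

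Next I would describe the fibre. Reading the labels in the order prescribed by $\sigma^{-1}$, set $w_k := a_{\sigma^{-1}(k)}$ and $\gamma_k := \sigma^c(\sigma^{-1}(k))$. The stable sort underlying $\pi_B$ forces $w_1 \le \cdots \le w_n$, strictly at each $k$ with $\sigma^{-1}(k) > \sigma^{-1}(k+1)$, while the colour rule imposes $w_k \equiv \gamma_k \pmod p$. Writing $w_k = p u_k + \gamma_k$ with $u_k \ge 0$ and using $B-1 = pM$, the range is $u_k \le M - 1(\gamma_k \ne 0)$. The decisive computation is that, after this substitution, the step $u_k \to u_{k+1}$ is forced strict exactly when $(\gamma_k, \sigma^{-1}(k)) > (\gamma_{k+1}, \sigma^{-1}(k+1))$ in the ordinary lexicographic order. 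Because $(\sigma^{-1})^c(k) = -\gamma_k \pmod p$ and the $\Sigma$-order ranks colours as $0, p-1, p-2, \ldots, 1$ (so the colour $-\gamma_k$ acquires rank $\gamma_k$), comparing consecutive values of $\sigma^{-1}$ in the $\Sigma$-order is precisely this lexicographic comparison. Thus the forced strict steps are exactly the interior descents of $\sigma^{-1}$, and the terminal bound $u_n \le M - 1(\gamma_n \ne 0)$ corresponds to the boundary descent $\sigma^c(n) \ne 0$, so that the strict steps together with the terminal reduction total $d(\sigma^{-1})$.

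Finally I would carry out the count. I would first observe that the intermediate caps $u_k \le M-1$ (for interior $k$ with $\gamma_k \ne 0$) are automatic: tracing the colours from such a $k$ down to $\gamma_n$, the last drop to colour $0$ supplies a strict step $u_j < u_{j+1} \le u_n \le M$, forcing $u_k \le M-1$. Hence $N(\sigma)$ equals the number of weakly increasing sequences $0 \le u_1 \le \cdots \le u_n \le M$ carrying $d(\sigma^{-1})$ mandatory strict steps (the terminal bound counting as one), which by the usual shift (subtract from $u_k$ the number of strict steps before position $k$) equals $\left(\begin{array}{c} n + M - d(\sigma^{-1}) \\ n \end{array}\right)$, as required.

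I expect the second paragraph to be the main obstacle: correctly matching the hybrid order-and-congruence constraints to the colored descent set of $\sigma^{-1}$ in the nonstandard $\Sigma$-order, in particular the rank bookkeeping sending colour $-\gamma_k$ to rank $\gamma_k$, the $k=n$ boundary term, and the redundancy of the individual range caps. As the excerpt indicates, one may instead bypass the explicit bijection and follow the generating-function computation of \cite{DF2} (Theorems 2.3 and 4.4), using the map $f_B$ of the Preliminaries to replace the residue conditions by the $\prec$-order before extracting the relevant coefficient.
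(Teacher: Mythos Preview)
Your proposal is correct and follows essentially the same route as the paper: reduce to $r=1$ via Lemma~\ref{composition}(2), then count the GSR words ${\bf A}\in{\cal D}(B)^n$ with $\pi_B[{\bf A}]=\sigma$ by reading the labels in the order dictated by $\sigma^{-1}$ and applying a stars-and-bars count. The paper compresses the entire second and third paragraphs into a single sentence (``putting $(c-d(\sigma^{-1}))$ slits in the array $(\sigma^{-1}(1,0),\ldots,\sigma^{-1}(n,0))$''), whereas you spell out explicitly the substitution $w_k=pu_k+\gamma_k$, the identification of forced strict steps with descents of $\sigma^{-1}$ in the $\Sigma$-order (via $\mathrm{rank}(-\gamma_k)=\gamma_k$), the redundancy of the interior caps, and the shift bijection. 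One small remark: your phrase ``the last drop to colour $0$'' is slightly loose; the clean statement is that if $\gamma_k\ne 0$ and $\gamma_n=0$ then the sequence $\gamma_k,\ldots,\gamma_n$ cannot be weakly increasing, so some $\gamma_j>\gamma_{j+1}$ forces $u_j<u_{j+1}$ and hence $u_k\le u_n-1\le M-1$.
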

\begin{proof}\mbox{}\\
By Lemma \ref{composition}, 
we may suppose 
$r=1$. 
Let 
$c = \frac {b-1}{p}$. 
A $(+b, n, p)$-shuffle 
generating 
$\sigma \in G_{p, n}$
is obtained in putting 
$(c - d(\sigma^{-1}))$-
slits in the array 
$\left(
\sigma^{-1}(1, 0), \cdots, 
\sigma^{-1}(n, 0)
\right)$
of 
$n$-elements, 
the numbers of which is equal to 
\[
\left( 
\begin{array}{c}
n + c - d(\sigma^{-1}) \\ n
\end{array} 
\right)
\]
yielding the conclusion for 
$r=1$. 

We give below an example of 
$p=3$,  
$b=3 \cdot 2 + 1 = 7$, 
$n=7$, 
and 
\beq
\sigma = 
\left(
(6,2), (5,1), (2,1), (3,2), (1,0), 
(7,0), (4,0)
\right)
\in G_{3, 7}.
\eeq
${\bf A}=^t
(5,4,1,2,0,6,3)$
is the GSR representation of the 
$(7, 7, 3)$-shuffle 
generating 
$\sigma$. 
$\sigma^{-1}$
has descent at 
$i=3, 6$. 
Since 
$d( \sigma^{-1} ) = 2$, 
there are no other ones generating  
$\sigma$. 
\beq
\begin{array}{ccc}
{\bf A} & \sigma & \sigma^{-1} \\ \hline
5 & (6,2) & (5,0) \\
4 & (5,1) & (3,2) \\
1 & (2,1) & \textcolor{red}{(4,1)} \\
2 & (3,2) & (7,0) \\
0 & (1,0) & (2,2) \\
6 & (7,0) & \textcolor{red}{(1,1)} \\
3 & (4,0) & (6,0) \\ \hline
\end{array}
\eeq
\QED
\end{proof}
The following lemma 
is a simple extension of 
\cite{DF2}, Proposition 4.1. 
\begin{lemma}
\label{Gessel}
Let 
$\sigma \in G_{p, n}$
with 
$d(\sigma) = d$  
and let 
\beq
c_{ij}^d
:= \sharp  \left\{
(\tau, \mu) \in G_{p, n} \times G_{p, n}
\, | \, 
d(\tau)=i, \; d(\mu) = j, 
\;
\tau \mu = \sigma 
\right\}.
\eeq
Then 
$c_{ij}^d$ 
is independent of the choice of 
$\sigma$ 
such that 
$d(\sigma) = d$
and satisfies the following equation. 
\beq
\sum_{i, j \ge 0}
\frac {
c_{ij}^d s^i t^j
}
{
(1-s)^{n+1} (1-t)^{n+1}
}
=
\sum_{a, b \ge 0}
\left( 
\begin{array}{c}
n+pab + a + b - d 
\\
n
\end{array}
\right)
s^a t^b.
\eeq
\end{lemma}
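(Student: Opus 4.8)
The plan is to follow the strategy of \cite{DF2}, Proposition 4.1: count the pairs of shuffles that compose to a fixed element of $G_{p,n}$ in two different ways. Fix $\sigma\in G_{p,n}$, write $b_1=pa+1$ and $b_2=pb+1$ (so $b_1,b_2\equiv1\pmod p$ and $b_1b_2\equiv1\pmod p$), and consider the pairs $({\bf A}_2,{\bf A}_1)\in{\cal D}(b_2)^n\times{\cal D}(b_1)^n$ with $\pi_{b_2}[{\bf A}_2]\circ\pi_{b_1}[{\bf A}_1]=\sigma$. On one hand, by Lemma \ref{star} the star map ${\bf A}\mapsto({\bf A}_2{\bf A}_1)^{\sharp}$ is a bijection of this set onto $\{{\bf A}\in{\cal D}(b_1b_2)^n:\pi_{b_1b_2}[{\bf A}]=\sigma\}$, whose cardinality is $\left(\begin{array}{c}n+pab+a+b-d(\sigma^{-1})\\ n\end{array}\right)$ by Lemma \ref{shufflecounting}, since $\tfrac{b_1b_2-1}{p}=pab+a+b$. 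On the other hand, grouping the pairs according to $\tau:=\pi_{b_2}[{\bf A}_2]$ and $\mu:=\pi_{b_1}[{\bf A}_1]$ and applying Lemma \ref{shufflecounting} to each factor separately, the same count equals $\sum_{i,j}\hat c^{\,d}_{ij}\left(\begin{array}{c}n+a-i\\ n\end{array}\right)\left(\begin{array}{c}n+b-j\\ n\end{array}\right)$, where $\hat c^{\,d}_{ij}$ is the number of factorizations $\tau\mu=\sigma$ with $d(\mu^{-1})=i$, $d(\tau^{-1})=j$, and $d=d(\sigma^{-1})$.

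Equating the two counts gives, for all integers $a,b\ge0$,
\[
\left(\begin{array}{c}n+pab+a+b-d\\ n\end{array}\right)
=\sum_{i,j}\hat c^{\,d}_{ij}\left(\begin{array}{c}n+a-i\\ n\end{array}\right)\left(\begin{array}{c}n+b-j\\ n\end{array}\right).
\]
Since the left side depends on $\sigma$ only through $d=d(\sigma^{-1})$, and since the functions $(a,b)\mapsto\left(\begin{array}{c}n+a-i\\ n\end{array}\right)\left(\begin{array}{c}n+b-j\\ n\end{array}\right)$ are linearly independent (they form a system that is triangular with $1$'s on the diagonal when evaluated at $a,b=0,1,2,\dots$), the coefficients $\hat c^{\,d}_{ij}$ are uniquely determined by $d$ and in particular are independent of the chosen $\sigma$, which yields the asserted independence. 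Multiplying the displayed identity by $s^at^b$, summing over $a,b\ge0$, and using the elementary expansion $\sum_{a\ge0}\left(\begin{array}{c}n+a-i\\ n\end{array}\right)s^a=s^i/(1-s)^{n+1}$ then produces the generating-function identity with $\hat c^{\,d}_{ij}$ in place of $c^{\,d}_{ij}$.

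To finish, I would pass from the inverse-descent coefficients $\hat c^{\,d}_{ij}$ to the coefficients $c^{\,d}_{ij}$ of the statement. The involution $(\tau,\mu)\mapsto(\mu^{-1},\tau^{-1})$ carries factorizations of $\sigma$ to factorizations of $\sigma^{-1}$ and identifies $\hat c^{\,d}_{ij}(\sigma)$ with $c^{\,d}_{ji}(\sigma^{-1})$; combined with the independence just established and the symmetry of the right-hand side under $(a,s)\leftrightarrow(b,t)$ (which forces $\hat c^{\,d}_{ij}=\hat c^{\,d}_{ji}$), this gives $c^{\,d}_{ij}=\hat c^{\,d}_{ij}$ and shows that $c^{\,d}_{ij}$ also depends only on $d$. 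I expect the main obstacle to be precisely this last bookkeeping step: Lemma \ref{shufflecounting} naturally counts in terms of the descents of the \emph{inverses} $\tau^{-1},\mu^{-1}$, whereas the statement is phrased through $d(\tau),d(\mu),d(\sigma)$, so one must check carefully that the involution and the symmetry of the generating function interchange the two conventions without altering the final identity. The summation formula and the linear-independence claim themselves are routine, exactly as in the $p=1$ argument of \cite{DF2}.
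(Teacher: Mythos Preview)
Your argument is correct and follows essentially the same route as the paper: count, for fixed $\sigma$, the pairs of GSR arrays whose shuffles compose to $\sigma$, once via Lemma~\ref{star}/Lemma~\ref{composition} and once via Lemma~\ref{shufflecounting} applied to each factor, then pass to generating functions. The only real difference is the bookkeeping you flag at the end. The paper sidesteps it by reindexing the group-algebra sums through $\mu\mapsto\mu^{-1}$, $\tau\mapsto\tau^{-1}$, $\sigma\mapsto\sigma^{-1}$ from the start, writing the shuffle element as $\sum_{\mu}\binom{n+a-d(\mu)}{n}\,\mu^{-1}$; multiplying two such sums and extracting the coefficient of $\sigma^{-1}$ then yields directly $\sum_{\tau\mu=\sigma}\binom{n+a-d(\mu)}{n}\binom{n+b-d(\tau)}{n}=\binom{n+pab+a+b-d(\sigma)}{n}$, so $c^d_{ij}$ (with $d=d(\sigma)$) appears without ever passing through inverse descents. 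Your involution $(\tau,\mu)\mapsto(\mu^{-1},\tau^{-1})$ does the same job a posteriori; note that it in fact gives $\hat c^{\,d}_{ij}(\sigma)=c^{\,d}_{ij}(\sigma^{-1})$ (no index swap), but since you also prove $\hat c^{\,d}_{ij}=\hat c^{\,d}_{ji}$ from the $(a,s)\leftrightarrow(b,t)$ symmetry, the slip is harmless and the conclusion stands.
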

\begin{proof}
By Lemma \ref{composition}
we have 
\beq
&&
\sum_{\mu}
\left(
\begin{array}{c}
n + a - d (\mu) 
\\
n
\end{array}
\right)
\mu^{-1}
\sum_{\tau}
\left(
\begin{array}{c}
n + b - d(\tau) 
\\
n
\end{array}
\right)
\tau^{-1}
\\
&&
\qquad\qquad
=
\sum_{\sigma}
\left(
\begin{array}{c}
n + pab + a + b - d(\sigma)
\\
n
\end{array}
\right)
\sigma^{-1}.
\eeq
We multiply both sides by 
$s^a \cdot t^b$, 
take summation in 
$a, b \ge 0$, 
and then take the coefficient of 
$\sigma$. 
Noting that 
$\tau \mu = \sigma$, 
we have 
\beq
&&
\sum_{\tau \mu = \sigma}
\sum_a
\left(
\begin{array}{c}
n + a - d(\mu)
\\
n
\end{array}
\right)
s^a
\sum_b
\left(
\begin{array}{c}
n + b - d(\tau)
\\
n
\end{array}
\right)
t^b
\\
&=&
\sum_{\tau \mu = \sigma}
\frac {
s^{d(\mu)}
}
{
(1 - s)^{n+1}
}
\cdot
\frac {
t^{d(\tau)}
}
{
(1-t)^{n+1}
}
\\
&=&
\sum_{i, j}
c_{ij}^d
\frac {
s^{i}
}
{
(1 - s)^{n+1}
}
\cdot
\frac {
t^{j}
}
{
(1-t)^{n+1}
}.
\eeq
\QED
\end{proof}

\noindent
{\it Alternative Proof of Proposition 
\ref{onestep}}\\
By
Lemma \ref{shufflecounting}, 
we have 
\beq
{\bf P}(d (\sigma_r) = j)
&=&
\sum_{i \ge 0}
\sum_{d(\sigma^{-1})=i, \; d(\sigma)=j}
{\bf P}(\sigma_r = \sigma)
\\
&=&
\sum_{i \ge 0}
c_{ij}^0
\left(
\begin{array}{c}
n + \frac {b^r-1}{p} - i 
\\
n
\end{array}
\right)
\cdot
b^{-rn}.
\eeq
Putting 
$d=0$
in 
Lemma \ref{Gessel}, 
and taking the coefficient of 
$s^{m}$, 
$m :=
\frac{b^r-1}{p}$
on both sides, we have 
\beq
&&
\sum_{i, k \ge 0}
c_{ik}^0
\left(
\begin{array}{c}
n + \frac {b^r - 1}{p} - i  \\ n
\end{array}
\right)
\frac {t^k}{(1-t)^{n+1}}
%
%&& \qquad
=
\sum_d
\left(
\begin{array}{c}
n + \frac {b^r-1}{p} (pd+1) + d \\ n
\end{array}
\right)
t^d.
\eeq
We multiply 
$(1-t)^{n+1}$
both sides and take the coefficient of 
$t^j$
on both sides. 
\beq
&&
{\bf P}(d(\sigma_r) = j)
\\
&=&
b^{-rn}
[t^j]
\left\{
(1-t)^{n+1}
\sum_d
\left(
\begin{array}{c}
n + \frac {b^r-1}{p} (pd+1) + d \\ n
\end{array}
\right)
t^d
\right\}
\\
&=&
b^{-rn}
\sum_r
(-1)^r
\left(
\begin{array}{c}
n+1 \\ r
\end{array}
\right)
\left(
\begin{array}{c}
n + b^r (j-r)
+
\frac {b^r - 1}{p}
\\
n
\end{array}
\right)
\\
&=&
{\bf P}\left(
\kappa_r = j \, | \, \kappa_0 = 0
\right).
\eeq
In the last equality, 
we used Lemma \ref{composition}. 
\QED

\vspace*{1em}

\noindent {\bf Acknowledgement }
The authors
would like to thank Professor Masao Ishikawa 
for valuable discussions.
This work is partially supported by 
JSPS grant Kiban-C no.26400145(F.N.)
and no.26400149(T.S.).

%%%%% REFERENCES %%%%%%%%%%%%%%%%%%%%%
%
\small

\end{document}